\allowdisplaybreaks \numberwithin{equation}{section}
\numberwithin{equation}{section}
\newtheorem{theorem}{Theorem}[section]
\newtheorem{proposition}[theorem]{Proposition}
\newtheorem{corollary}[theorem]{Corollary}
\newtheorem{lemma}[theorem]{Lemma}
\theoremstyle{definition}
\newtheorem{definition}[theorem]{Definition}
\theoremstyle{remark}
\begin{document}

\title
[Steady vortex-patch flows with circulation ]{Steady vortex-patch  flows with circulation past an obstacle in an infinity long strip}

\author{Weilin Yu}
\address{School of Mathematics and Statistics, Jiangxi Normal University, Nanchang, Jiangxi 330022, P. R. China
\\
Beijing Institute of Mathematical Sciences and Applications,
    Beijing 101408,
    P. R. China}
\email{weilinyu@amss.ac.cn}


\begin{abstract}
We consider the  steady Euler flows  past an  obstacle in an infinity long strip with  horizontal constant velocity at infinity, 
prescribed circulation around the obstacle and sharply concentrated patch-type vorticity.  The construction of these flows are based on the structure of a new Green's function, the existence of stable minimum points of the   Kirchhoff-Routh functions and the existence of maximizers of the kinetic energy for the vorticity. We mainly focus on the effect from the obstacle, the velocity at infinity and the circulation around the obstacle on the existence of  minimum points of the  Kirchhoff-Routh function, and  hence on the existence of  vortex patch flows.  
\end{abstract}

\maketitle

Keywords: Vortex-patch flows,  Incompressible Euler equation, Kirchhoff-Routh function, Vorticity method

\section{Introduction and main results}

The problem of vortex flow past  obstacles is an important issue in fluid dynamics, involving the interactions between fluids and solid obstacles. When a fluid flows past an obstacle, various types of vortex phenomena may exist, such as the F\"oppl vortex pair \cite{F,VMS},   Kármán vortex street \cite{HRHB,vK}, horseshoe vortices \cite{ONS}, wake vortices \cite{PPS},  and turbulent eddies \cite{S}. 
The  theory  of  vortex phenomena in flow around obstacles  has likewise attracted great attentions in mathematics community. We refer readers to see \cite{EFHM,EM,Tur83} for the existence of steady vortices, 
\cite{M} for the evolution of vortex set and \cite{HI,ILN1,ILN2,L,LM} for the asymptotic behaviors as the size of obstacles tends to zero.


In this paper we consider a steady, incompressible Euler flow  past an obstacle in a two-dimensional infinitely long strip:
\[
\Omega=S\setminus \bar{O}_0,
\]
where $S=\mathbb{R}\times (0,\pi)$, and  $O_0$ is a bounded, simply-connected open subset of $\mathbb{R}^2$ with $C^3$ boundary such that $\bar{O}_0\subset S$. 
This flow is governed by the  following incompressible Euler equation
\begin{equation}\label{eq1-1}
\begin{cases}
(\mathbf{v}\cdot \nabla)\mathbf{v}+\nabla P=0\quad &\text{in}\; \Omega,\\
\text{div}\; \mathbf{v}=0\quad &\text{in}\; \Omega,\\
\mathbf{v}\cdot \mathbf{n}=0\quad &\text{on}\; \partial \Omega,
\end{cases}
\end{equation}
where $\mathbf{v}=(v_1,v_2)$ and $P$ are the velocity and pressure of this flow, respectively, $\mathbf{n}=(n_1,n_2)$ is the unit outward normal of $\partial \Omega$.  We are interested in  a special type of Euler flows, whose velocity is uniform and horizontal at infinity and circulation around the obstacle is prescribed, namely, 
\begin{equation}\label{eq1-2}
\mathbf{v}\to \mathbf{v}_\infty:=(-b,0)\;\;\text{as}\;\; |x|\to \infty,\quad \oint_{\partial O_0} \mathbf{v}\cdot \tau\; dS_x= \Gamma, 
\end{equation}
where $b, \Gamma \in \mathbb{R}\setminus \{0\}$ are two constants, $\tau=(n_2,-n_1)$ is the unit tangent of $\partial O_0$.

In this flow, the stream function $\psi$ is defined by $\mathbf{v}=(\partial_2 \psi, -\partial_1 \psi)$, and the scalar vorticity is defined by $\omega=\partial_1v_2-\partial_2v_1$, where $\partial_i=\frac{\partial }{\partial x_i}$ for $i=1,2$. By applying \textit{curl} on the first equation of \eqref{eq1-1},  we get the following equation 
\begin{equation}\label{eq1-3}
\nabla \omega \cdot \nabla^\perp \psi=0\;\;\;\text{in}\;\; \Omega.
\end{equation}
where $(a_1,a_2)^\perp=(a_2,-a_1)$.  Moreover,  under the sliding boundary condition $\mathbf{v}\cdot \mathbf{n}=0$  on $\partial \Omega$ and the condition \eqref{eq1-2}, 
$(\omega,\psi)$ should further satisfy the following elliptic equation 
\begin{equation}\label{eq1-4}
\begin{cases}
-\Delta \psi=\omega\quad \text{in}\; \Omega,\\
\psi=constant \quad \text{on each connected component of $\partial \Omega$}, 
\\
\nabla \psi \to (0,-b)\;\; \text{as}\;\; |x|\to \infty, 
\\
\int_{\partial O_0} \frac{\partial \psi}{\partial \mathbf{n}}\; dS_x= \Gamma. 
\end{cases}
\end{equation}
 Let us introduce a streamfunction $-\eta$ of an irrotational flow with velocity $(-b,0)$ and circulation $\Gamma$ around $\partial O_0$, and is defined by 
\begin{equation}\label{eq1-5}
\begin{cases}
\Delta \eta=0\quad \text{in}\; \Omega,\\
\eta=-\lambda \;\;  \text{on}\; \partial O_0,\quad \eta=0 \;\; \text{on}\;\; \{x_2=0\}, \quad  \eta=b\pi \;\; \text{on}\;\; \{x_2=\pi\}, 
\\
\nabla \eta \to (0,b)\;\; \text{as}\;\; |x|\to \infty, 
\\
\int_{\partial O_0} \frac{\partial \eta}{\partial \mathbf{n}}\; dS_x=-\Gamma, 
\end{cases}
\end{equation}
where $\lambda$ is a flux constant as a Lagrange multiplier.  Suppose that $\omega\in L^2(\Omega)$. Then the following equation has a unique solution $u=\mathcal{G}\omega\in H^1(\Omega)$:
\begin{equation}\label{eq1-6}
\begin{cases}
-\Delta u=\omega \quad \text{in}\; \Omega,\\
u=\text{constant} \;\;  \text{on}\; \partial O_0,\;\;\; u=0 \;\; \text{on}\; \partial S, 
\\
u \to 0\;\; \text{as}\;\; |x|\to \infty, 
\\
\int_{\partial O_0} \frac{\partial u}{\partial \mathbf{n}}\; dS_x=0,
\end{cases}
\end{equation}
where $\mathcal{G}$ is the Green's operator (see \eqref{eq1-13} for the definition). If  $\omega$ additionally has compact support, then $\nabla \mathcal{G}\omega (x)\to 0$ as $|x|\to +\infty$. Thus, 
\begin{equation}\label{eq1-7}
\psi=\mathcal{G}\omega-\eta-\mu
\end{equation}
is a solution of \eqref{eq1-4}, where $\mu$ is a constant.

Based on above discussions,  by inserting  \eqref{eq1-7} into \eqref{eq1-4} we can translate the  problem  \eqref{eq1-1}-\eqref{eq1-2} into  
the following vorticity equation 
\begin{equation}\label{eq1-8}
\nabla \omega \cdot \nabla^\perp (\mathcal{G}\omega-\eta)=0\;\;\;\text{in}\;\;\Omega,
\end{equation}
where $\omega$ should satisfy the  condition
\begin{equation}\label{eq1-9}
\omega\in L^2(\Omega),\quad \overline{\text{supp}(\omega)}\;\; \text{is a compact set}. 
\end{equation}
Since $\mathcal{G}$ is a linear continuous map from $L^2(\Omega)$ to $H^1(\Omega)$, we can give the definition of weak solutions to the vorticity equation \eqref{eq1-8}. 

\begin{definition}\label{de2-1} We call $\omega\in L^2(\Omega)$  a weak solution of \eqref{eq1-8} if for all $\phi\in C_0^\infty(\Omega)$, 
\begin{equation}\label{eq1-10}
\int_\Omega  \omega \nabla^\perp (\mathcal{G}\omega-\eta)\cdot \nabla \phi \;dx=0.
\end{equation}
\end{definition}

A typical weak solution whose vorticity is the characteristic function of  finite isolated bounded regions with jump discontinuity, is called \textbf{vortex patch solution}. Such patterns are a special class of  solutions satisfying \eqref{eq1-9}, and has the form 
\begin{equation}\label{eq1-11}
\omega(x)=\frac{1}{\varepsilon^2}\sum_{i=1}^k \mathbf{1}_{A_i},
\end{equation}
where $A_i$, $i=1,\cdots,k$ are $k$ disjoint connected bounded subdomains of $\Omega$, $\mathbf{1}_A$ is the characteristic function of $A$, namely, $\mathbf{1}_A=1$ in $A$, otherwise, $\mathbf{1}_A=0$,  and $1/\varepsilon^2$ is the vorticity strength parameter. 

There are a lot of literatures for vortex patch solutions of Euler equations in   planar domains.  For  bounded domains,  Turkington \cite{Tur83} has proved  the existence of vortex patch solutions with single vortex $(k=1)$ via the vorticity method, and  showed that as $\varepsilon\to 0$, the vorticity concentrated at a local minimum point of the Robin function.  Cao-Peng-Yan \cite{CPY2} further construct vortex patch solutions with multiple vortices $(k\geq 2)$ via  the so-called  Lyapunov-Schmidt reduction method, where as $\varepsilon\to0$, the vorticity concentrates at some isolated  critical point with non-zero Brouwer degree of the Kirchhoff-Routh function. The local uniqueness of these  solutions  are studied in \cite{CGPY}. 
For  unbounded domains,  the  only known  explicit simply-connected stationary vortex patch solution is called the Rankine vortex, which is defined in the whole plane. In \cite{Tur83}, Turkington  considered the Euler flow past a bounded axially symmetric obstacle in the whole plane, and proved that there exist  solutions having a pair of vortex patches with opposite signs. Furthermore, Elcrt and Miller \cite{EM} studied the case of  asymmetric obstacle,  and obtained the existence of single vortex patch solutions if the   circulation  around the obstacle is large enough.  As for rotating vortex patch problems, please to see  \cite{CCG,HHHM,HHMV,HM,K} and references therein.

In this paper, we are interested in the existence of vortex patch solutions of \eqref{eq1-8} in an infinite long strip with an obstacle. To  this purpose, we first study Green's function of the following 
linear problem 
\begin{equation}\label{eq2-1}
\begin{cases}
-\Delta u=f\quad \text{in}\; \Omega,\\
u=\text{constant} \;\;  \text{on}\; \partial O_0,
\quad
u=0 \;\; \text{on}\; \partial S, 
\\
u \to 0\;\; \text{as}\;\; |x|\to \infty, 
\\
\int_{\partial O_0} \frac{\partial u}{\partial \mathbf{n}}\; dS_x=0.
\end{cases}
\end{equation}
By Lemma~\ref{lem2-1y},  for any $f\in L^2(\Omega)$,   there exists a unique weak solution $u_f\in H^1(\Omega)$ of \eqref{eq2-1}.  So, one can define a Green's operator 
\begin{equation}\label{eq1-13}
\mathcal{G}: L^2(\Omega) \to H^1(\Omega),\quad f\mapsto u_f.
\end{equation}

The following   result gives a structure of the Green's function of \eqref{eq2-1}. 
\begin{theorem}\label{thm2-2} 
 For every $f\in L^2(\Omega)$ and every $x\in \Omega$, 
\begin{equation}\label{eq2-2}
\mathcal{G} f(x)=\int_{\Omega} \big(G_0(y,x)+\lambda_0 \rho(y)\rho(x)\big) f(y)\; dy,
\end{equation}
where  $G_0(y,x)$ is the Green function of $-\Delta$ in $\Omega$ with zero Dirichlet boundary condition, 
 $\rho\in C^2(\bar{\Omega})$ is the unique solution of 
\begin{equation}\label{eq2-3}
\begin{cases}
\Delta \rho=0\quad \text{in}\; \Omega,\\
\rho=1 \;\;  \text{on}\; \; \partial O_0,
\;\;\;
\rho=0 \;\; \text{on}\;\;  \partial S, 
\\
\rho, |\nabla \rho|\to 0\;\; \text{as}\;\; |x|\to \infty,
\end{cases}
\end{equation}
and 
$
\lambda_0=\Big(\int_{\partial O_0}\; \frac{\partial \rho}{\partial \mathbf{n}}\; dS_x\Big)^{-1}>0.
$
\end{theorem}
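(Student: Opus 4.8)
The plan is to produce an explicit candidate for $\mathcal{G}f$ by adding to the Dirichlet Green potential a suitable multiple of the harmonic function $\rho$, and then to invoke the uniqueness part of Lemma~\ref{lem2-1y} to conclude. Concretely, set
\[
c_f:=\int_\Omega \rho(y)f(y)\,dy,\qquad v(x):=\int_\Omega G_0(y,x)f(y)\,dy,\qquad w(x):=\lambda_0\,c_f\,\rho(x),
\]
and let $u:=v+w$. Here $v$ is the solution of the pure Dirichlet problem $-\Delta v=f$ in $\Omega$, $v=0$ on $\partial\Omega=\partial O_0\cup\partial S$. The formula \eqref{eq2-2} is then exactly the assertion $u=\mathcal{G}f$, so it suffices to check that $u$ solves \eqref{eq2-1}.

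First I would verify the interior equation and the boundary data. Since $\rho$ is harmonic, $-\Delta u=-\Delta v=f$ in $\Omega$. On $\partial S$ both $v$ and $\rho$ vanish, so $u=0$ there; on $\partial O_0$ we have $v=0$ and $\rho\equiv1$, hence $u\equiv\lambda_0 c_f$ is the required constant. The decay $u\to0$ as $|x|\to\infty$ follows from the corresponding decay of $v$ and of $\rho$. Thus three of the four conditions in \eqref{eq2-1} are immediate, and only the flux condition $\int_{\partial O_0}\partial u/\partial\mathbf n\,dS_x=0$ requires work.

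The heart of the argument is the flux computation, which I would carry out with Green's second identity applied to the pair $(\rho,v)$. Since $\Delta\rho=0$ and $-\Delta v=f$, integration by parts gives
\[
-c_f=\int_\Omega \rho\,\Delta v\,dx=\int_{\partial\Omega}\Big(\rho\,\frac{\partial v}{\partial\mathbf n}-v\,\frac{\partial\rho}{\partial\mathbf n}\Big)\,dS_x .
\]
On $\partial\Omega$ we have $v=0$, while $\rho=1$ on $\partial O_0$ and $\rho=0$ on $\partial S$; therefore the right-hand side collapses to $\int_{\partial O_0}\partial v/\partial\mathbf n\,dS_x$, and we obtain $\int_{\partial O_0}\partial v/\partial\mathbf n\,dS_x=-c_f$. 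For the correction term, the definition of $\lambda_0$ yields
\[
\int_{\partial O_0}\frac{\partial w}{\partial\mathbf n}\,dS_x=\lambda_0 c_f\int_{\partial O_0}\frac{\partial\rho}{\partial\mathbf n}\,dS_x=c_f .
\]
Adding the two contributions gives $\int_{\partial O_0}\partial u/\partial\mathbf n\,dS_x=-c_f+c_f=0$, which is the last condition in \eqref{eq2-1}. By uniqueness (Lemma~\ref{lem2-1y}), $u=\mathcal{G}f$, proving \eqref{eq2-2}. Finally, the positivity $\lambda_0>0$ follows from testing \eqref{eq2-3} against $\rho$ itself: Green's first identity gives $\int_{\partial O_0}\partial\rho/\partial\mathbf n\,dS_x=\int_\Omega|\nabla\rho|^2\,dx>0$, the strict inequality holding because $\rho$ is nonconstant.

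The step I expect to be the genuine obstacle is the rigorous justification of these Green's identities on the unbounded strip $\Omega$. Rather than integrate directly over $\Omega$, I would work on the truncated domains $\Omega_R:=\Omega\cap\{|x_1|<R\}$ and then let $R\to\infty$, the task being to show that the boundary terms on the vertical cross-sections $\{x_1=\pm R\}\cap S$ tend to $0$. This is exactly where one needs the decay of $\rho$, of $G_0(\cdot,x)$ (hence of $v$), and of their gradients as $|x_1|\to\infty$; in the strip these Dirichlet-type solutions decay exponentially, which forces the cross-section integrals to vanish in the limit and legitimizes every identity used above. The interior integrations by parts are unproblematic once one recalls that $f\in L^2(\Omega)$ gives $v\in H^2_{\mathrm{loc}}(\Omega)$ by elliptic regularity and $\rho\in C^2(\bar\Omega)$ by assumption.
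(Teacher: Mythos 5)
Your proof is correct and follows essentially the same route as the paper: the same ansatz $u=v+\lambda_0(\int_\Omega\rho f)\,\rho$, the same Green's-identity computation showing the two flux contributions cancel, and the same appeal to the uniqueness in Lemma~\ref{lem2-1y}. The only (harmless) deviations are that the paper first reduces to $f\in C_0^\infty(\Omega)$ by density and the continuity of $\mathcal{G}$ (Lemma~\ref{lem2-2y}), which sidesteps the $L^2$-regularity and truncation issues you flag, and that it derives $\lambda_0>0$ from the maximum principle plus the Hopf lemma rather than from your equally valid energy identity $\int_{\partial O_0}\partial\rho/\partial\mathbf{n}\,dS_x=\int_\Omega|\nabla\rho|^2\,dx$.
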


As a consequence  of Theorem~\ref{thm2-2}, the Green's function $G(y,x)$ of  \eqref{eq2-1}  has the following expansion 
\begin{equation}\label{eq2-4}
G(y,x)=G_0(y,x)+\lambda_0 \rho(x) \rho(y).
\end{equation}
Let $H(y,x)$ be the regular part of $G(y,x)$ so that $G(y,x)=\frac{1}{2\pi}\ln \frac{1}{|y-x|}-H(y,x)$. The 
 following function, so-called  Kirchhoff-Routh function, plays an important role in the study of concentrated  single-vortex solution of  Euler equations
\begin{equation}\label{eq1-17}
\mathcal{H}(x)= H(x,x)+2 \eta(x).
\end{equation}

\begin{theorem}\label{th3-1}Let  $b$  and $\Gamma$  be two positive constants. There holds:
  \begin{itemize}
  
   \item[$(i)$] If $\Gamma/b \geq \pi \int_{\mathbb{R}\times \{x_2=\pi\}} | \frac{\partial \rho}{\partial \mathbf{n}} |\; dS_x$, then the global minimum set $\mathcal{Q}$ is a non-empty bounded set and 
   \[
   \text{dist} (\mathcal{Q},\partial \Omega)>0,
   \]
   where $\mathcal{Q}$ denotes all the global minimum points of $\mathcal{H}$ on $\bar{\Omega}$.

  \item[$(ii)$] There exist two positive  constants $\theta_2>\theta_1>0$ and  a large constant $b_0>0$ such that if $b>b_0$ and  $\Gamma/b \geq \pi \int_{\mathbb{R}\times \{x_2=\pi\}} | \frac{\partial \rho}{\partial \mathbf{n}} |\; dS_x$, then the local minimum set $\mathcal{Q}_{1}$ is  non-empty  and 
 \[
 \text{dist} (\mathcal{Q}_{1}, \partial  D_{b,\Gamma} )>0,
 \]
 where $\mathcal{Q}_{1}$  denotes all the minimum points of $\mathcal{H}$ on $\overline{D}_{b,\Gamma}$ and 
\[
D_{b,\Gamma}:=\Big\{ x\in \Omega:\; \frac{\theta_1}{b+\lambda_{b,\Gamma}}<dist (x,\partial O_0)< \frac{\theta_2}{b+\lambda_{b,\Gamma}}
\Big\}.
\]
Here, $\lambda_{b,\Gamma}$ is a positive constant depending on $b$ and $\Gamma$ (see \eqref{eq2-24}).

  \item[$(iii)$] There exist two small constant $\sigma>0$ and $\delta>0$,  and  a large constant $b_0>0$ such that if $b>b_0$ and  
  \[
  \pi \int_{\mathbb{R}\times \{x_2=\pi\}} | \frac{\partial \rho}{\partial \mathbf{n}} |\; dS_x\leq \Gamma/b\leq  \pi \int_{\mathbb{R}\times \{x_2=\pi\}} | \frac{\partial \rho}{\partial \mathbf{n}} |\; dS_x+\sigma \int_{\partial O_0} |\frac{\partial \rho}{\partial \mathbf{n}}| \; dS_x,
  \]
 then the local minimum sets $\mathcal{Q}_{1}$ and $\mathcal{Q}_{2}$ are two non-empty bounded sets and 
 \[
 \text{dist} (\mathcal{Q}_{1}, \partial  D_{b,\Gamma})>0,\;\; \text{dist} (\mathcal{Q}_{2}, \partial  \Omega_\delta)>0,
 \]
 where $\mathcal{Q}_{1}$  denotes all the minimum points of $\mathcal{H}$ on $\bar{D}_{b,\Gamma}$  and  $\mathcal{Q}_{2}$  denotes all the minimum points of $\mathcal{H}$ on $\overline{\Omega}_\delta$, and 
 \[
\Omega_\delta:=\{x\in \Omega:\; \text{dist}(x,\partial O_0)>\delta\}.
 \]

\end{itemize}
\end{theorem}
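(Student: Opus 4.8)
The plan is to obtain a closed formula for $\mathcal H$ and then read off its behaviour on $\partial\Om$, at spatial infinity, and in a thin collar around $O_0$. First I would split the irrotational stream function of \eqref{eq1-5} as $\eta=b\,\zeta-\la_{b,\Gamma}\,\rho$, where $\rho$ is the harmonic function of \eqref{eq2-3} and $\zeta$ solves the same Dirichlet problem as $\eta$ with $b=1$ and zero flux constant (so $\zeta=0$ on $\partial O_0$, $\zeta=0$ and $\zeta=\pi$ on the lower and upper walls, and $\zeta\to x_2$ as $|x|\to\infty$). Imposing the flux normalisation in \eqref{eq1-5} forces $\la_{b,\Gamma}=\la_0(\Gamma+bA)$ with $A:=\int_{\partial O_0}\partial_{\mathbf n}\zeta\,dS_x$, which is exactly \eqref{eq2-24}. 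The key preliminary observation is that Green's second identity applied to the harmonic pair $(\zeta,\rho)$ on $\Om$, using the boundary data ($\zeta=0,\ \rho=1$ on $\partial O_0$; $\zeta=0,\ \rho=0$ on $\{x_2=0\}$; $\zeta=\pi,\ \rho=0$ on $\{x_2=\pi\}$) and the decay at infinity, collapses to $A=-\pi\int_{\mathbb R\times\{x_2=\pi\}}|\partial_{\mathbf n}\rho|\,dS_x$. Hence the hypothesis $\Gamma/b\ge\pi\int_{\{x_2=\pi\}}|\partial_{\mathbf n}\rho|\,dS_x$ is precisely $\la_{b,\Gamma}\ge0$, the sign that turns the circulation term into a potential \emph{well} rather than a barrier near $O_0$.

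From \eqref{eq2-4} one has $H(x,x)=H_0(x,x)-\la_0\rho(x)^2$, so that
\begin{equation*}
\mathcal H(x)=H_0(x,x)-\la_0\rho(x)^2+2b\,\zeta(x)-2\la_{b,\Gamma}\,\rho(x).
\end{equation*}
Near $\partial\Om$ the diagonal regular part blows up, $H_0(x,x)=\tfrac{1}{2\pi}\ln\tfrac{1}{2\,\mathrm{dist}(x,\partial\Om)}+O(1)$, while every other term stays bounded; thus $\mathcal H\to+\infty$ on $\partial\Om$, which immediately yields the $\mathrm{dist}(\cdot,\partial\Om)>0$ statements once existence is in hand. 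For $|x_1|\to\infty$ I would use the conformal map $w=e^{z}$, $z=x_1+\mathrm i x_2$, taking $S$ to the upper half-plane, to compute $H_0(x,x)\to\tfrac{1}{2\pi}\ln\tfrac{1}{2\sin x_2}$; together with $\rho,\nabla\rho\to0$ and $\zeta\to x_2$ this gives $\mathcal H(x)\to\mathcal H_\infty(x_2):=\tfrac{1}{2\pi}\ln\tfrac{1}{2\sin x_2}+2bx_2$, a profile independent of $\Gamma$ that blows up at $x_2=0,\pi$ and attains a finite minimum $m_\infty$ at the interior root of $\cot x_2=4\pi b$ (so $m_\infty\sim\tfrac{1}{2\pi}\ln b$ for large $b$).

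For $(i)$ these two facts reduce everything to producing one point with $\mathcal H<m_\infty$: along any escaping sequence $\liminf\mathcal H\ge m_\infty$, and $\mathcal H\to+\infty$ on $\partial\Om$, so a minimising sequence must stay in a fixed compact subset of $\Om$ and an interior global minimiser exists with $\mathcal Q$ bounded and $\mathrm{dist}(\mathcal Q,\partial\Om)>0$. To find such a point I would fix the height $x_2=x_2^\ast$ minimising $\mathcal H_\infty$ and take $x^0$ at moderate distance from $O_0$ at that height; comparing with $m_\infty=\mathcal H_\infty(x_2^\ast)$, the three differences $-\la_0\rho^2\le0$, $-2\la_{b,\Gamma}\rho\le0$ (here $\la_{b,\Gamma}\ge0$ is used) and $2b(\zeta-x_2)\le0$ (since $\zeta-x_2$ is harmonic, $<0$ on $\partial O_0$, $=0$ on $\partial S$ and at infinity, hence negative by the maximum principle) all lower $\mathcal H$, and for a correct choice of $x^0$ they dominate the only competing term $H_0(x^0,x^0)-\tfrac{1}{2\pi}\ln\tfrac{1}{2\sin x_2^\ast}=O(1)$.

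For $(ii)$ and $(iii)$ I would run a boundary-layer expansion in the collar variable $d=\mathrm{dist}(x,\partial O_0)$. Writing $\rho\approx1-\beta(s)d$, $\zeta\approx\gamma(s)d$ and $H_0(x,x)=\tfrac{1}{2\pi}\ln\tfrac{1}{2d}+O(1)$ along the inward normal at a boundary point $s$, the reduced profile
\begin{equation*}
\mathcal H\approx\tfrac{1}{2\pi}\ln\tfrac{1}{2d}-2\la_{b,\Gamma}+\bigl(2(\la_{b,\Gamma}+\la_0)\beta(s)+2b\gamma(s)\bigr)d+O(1)
\end{equation*}
has a nondegenerate interior minimum in $d$ at $d_\ast(s)\sim\bigl(4\pi[(\la_{b,\Gamma}+\la_0)\beta(s)+b\gamma(s)]\bigr)^{-1}\sim(b+\la_{b,\Gamma})^{-1}$. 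Choosing $0<\theta_1<\theta_2$ to bracket $d_\ast$ confines this minimum strictly inside $D_{b,\Gamma}$, and the largeness of $b$ makes the expansion uniform; this gives $(ii)$. For $(iii)$ the narrow window is equivalent, via $\la_0\int_{\partial O_0}|\partial_{\mathbf n}\rho|\,dS_x=1$, to $\la_{b,\Gamma}=\la_0 b(\Gamma/b+A)\in[0,b\sigma]$; since $-2\la_{b,\Gamma}\rho$ is large only where $\rho$ is close to $1$ (i.e. inside the collar) and is an $O(\sigma)$ relative perturbation of the bulk landscape where $\rho$ is bounded away from $1$, both the collar minimum $\mathcal Q_1$ of the restriction to $\overline D_{b,\Gamma}$ and the bulk minimum $\mathcal Q_2$ of the restriction to $\overline\Om_\delta$ should be attained in the interior, the two regions being separated because $1/(b+\la_{b,\Gamma})\ll\delta$. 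The step I expect to be the main obstacle is precisely this coexistence: one must show, uniformly for $\la_{b,\Gamma}$ in the $O(b\sigma)$ window, that the deep collar well does not drain the bulk, i.e. that the restriction of $\mathcal H$ to $\overline\Om_\delta$ does not attain its minimum on the inner face $\{d=\delta\}$. This forces a delicate multi-scale matching of the four parameters $b,\ \la_{b,\Gamma},\ \sigma,\ \delta$, balancing the collar value $\tfrac{1}{2\pi}\ln(b+\la_{b,\Gamma})-2\la_{b,\Gamma}$ against the $O(1)$ variations of the bulk landscape relative to $m_\infty$, and this is where the quantitative work concentrates.
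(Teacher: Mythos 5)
Your architecture coincides with the paper's: the boundary blow--up $H(x,x)=\frac{1}{2\pi}\ln\frac{1}{2\,\mathrm{dist}(x,\partial\Omega)}+O(1)$ keeps minimizers off $\partial\Omega$, the limit profile $\mathcal H_\infty(x_2)=\frac{1}{2\pi}\ln\frac{1}{2\sin x_2}+2bx_2$ controls the far field, and the collar expansion with linear coefficient of size $b+\lambda_{b,\Gamma}$ produces the well $D_{b,\Gamma}$; your identities $\lambda_{b,\Gamma}=\lambda_0(\Gamma+bA)$ and $A=-\pi\int_{\{x_2=\pi\}}|\partial_{\mathbf n}\rho|\,dS_x$ reproduce \eqref{eq2-24}. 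The genuine gap is in $(i)$, at the sentence ``for a correct choice of $x^0$ they dominate the only competing term $\ldots=O(1)$''. This is the inequality the whole of $(i)$ rests on (without it $\mathcal Q$ could be empty, the infimum being attained only as $|x_1|\to\infty$), and it does not follow from the signs you list. Writing $\mathcal H(x)-\mathcal H_\infty(x_2)=\big(H_0(x,x)-H_S(x,x)\big)-\lambda_0\rho^2-2\lambda_{b,\Gamma}\rho+2b(\zeta-x_2)$, at a point with $|x_1^0|=O(1)$ all four terms are $O(1)$ with no means of comparison (and $\lambda_{b,\Gamma}$ may equal $0$ under the hypothesis of $(i)$, so there is no deep circulation well to lean on), while as $|x_1^0|\to\infty$ all four tend to zero, so everything hinges on relative decay rates, which you never establish. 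The missing input is the refined comparison of $G$ with $G_S$ obtained from the representation formula \eqref{eq1-18e} on $\partial O_0$: the hurting term satisfies $H(x,x)-H_S(x,x)=O(e^{-2|x_1|}\sin x_2)$ --- second order, because the coefficient $c_\pm(x)$ of Lemma~\ref{lem2-6} is itself $O(\rho(x))=O(e^{-|x_1|})$ --- whereas the helping term $2b(\zeta-x_2)-2\lambda_{b,\Gamma}\rho=2b\beta=-2b\beta_\pm e^{-|x_1|}\sin x_2+O(\Gamma e^{-2|x_1|})$ is first order with a sign--definite coefficient $\beta_\pm>0$ (Lemma~\ref{lem2-11}, which is precisely where $\lambda_{b,\Gamma}\ge 0$ and the Hopf lemma enter). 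Only this separation of rates yields a point $x^{L/2}$ with $\mathcal H(x^{L/2})<\inf_{\Omega\setminus\Omega_L}\mathcal H$, hence a non-empty bounded $\mathcal Q$.

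Part $(ii)$ of your plan is essentially the paper's argument and is fine. In $(iii)$ you correctly isolate the key difficulty (the minimum of $\mathcal H$ on $\overline\Omega_\delta$ must not sit on the inner face $\{d=\delta\}$) but leave it unresolved; it closes with a much cruder balance than the ``delicate multi-scale matching'' you anticipate. From \eqref{eq4-4y} together with $0\le\lambda_{b,\Gamma}\le\sigma b$ and the lower bound $A_{b,\Gamma}\ge c_1(b+\lambda_{b,\Gamma})$, one gets on $\{d=\delta\}$ the estimate $\mathcal H\ge\frac{1}{2\pi}\ln\frac{1}{2\delta}+(2c_1\delta-2\sigma)b+o(b)$, which grows \emph{linearly} in $b$ once $\sigma<c_1\delta/2$, whereas the bulk candidate from $(i)$ satisfies $\mathcal H(x^{L/2})\le\frac{1}{2\pi}\ln b+O(1)$, growing only logarithmically. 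So for $b$ large the collar well cannot drain the bulk, and the only constraint among the parameters is $\sigma<c_1\delta/2$ with $b>b_0(\sigma,\delta)$.
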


A key step to prove Theorem~\ref{th3-1} is to show the asymptotic behaviors of $\mathcal{H}$  at infinity and near the obstacle $O_0$.  The first part  is achieved  through the following Green's representation formula 
\begin{equation}\label{eq1-18e}
f(x)=\int_{ \partial O_0} G_S(y,x)\frac{\partial f(y)}{\partial \mathbf{n}}-f(y)\frac{\partial G_S(y,x)}{\partial \mathbf{n}} dS_y,\quad x\in \Omega,
\end{equation}
where $f$ is an arbitrary bounded harmonic function in $\Omega$ such that $f=0$ on $\partial S$, and $G_S(y,x)$ is the Green function of $-\Delta$ in $S$ with zero Dirichlet boundary condition, written as (see \cite{CGO})
\begin{equation}\label{eq1-18}
\begin{split}
G_S(y,x)=-\frac{1}{4\pi} \ln \big( \cosh(y_1&-x_1)-\cos(y_2-x_2)\big)
\\
&+\frac{1}{4\pi} \ln \big( \cosh(y_1-x_1)-\cos(y_2+x_2)\big).
\end{split}
\end{equation}

As applications of  Theorem~\ref{thm2-2} and Theorem~\ref{th3-1}, our first main result of this paper can be stated as follows. 
\begin{theorem}\label{thm1-4}Let  $b$  and $\Gamma$  be two positive constants, such that $\Gamma/b \geq \pi \int_{\mathbb{R}\times \{x_2=\pi\}} | \frac{\partial \rho}{\partial \mathbf{n}} |\; dS_x$. Then there exists $\varepsilon_0>0$ such that for each $0<\varepsilon<\varepsilon_0$,  there exists a solution $\omega_\varepsilon$ of \eqref{eq1-8} with the form 
\[
\omega_\varepsilon=\frac{1}{\varepsilon^2} \mathbf{1}_{\{
\mathcal{G} \omega_\varepsilon-\eta-\mu_\varepsilon>0
\}}\;\;\; a.e. \;\;\text{in}\;\; \Omega,
\]
 where $\mu_\varepsilon$ is a Lagrange multiplier so that  
$
\int_\Omega \omega_\varepsilon\;dx =1. 
$
The vorticity set $supp(\omega_\varepsilon)$ satisfies 
\[
supp(\omega_\varepsilon) \subset B_{r\varepsilon}(x_\varepsilon), 
\]
where $r>0$ is a constant and $x_\varepsilon\in \Omega$.  Moreover, as $\varepsilon\to 0$, 
 \[
 \begin{split}
&\text{dist}(x_\varepsilon, \mathcal{Q})\to 0,
\quad \mu_\varepsilon=\frac{1}{2\pi}\ln \frac{1}{\varepsilon}+O(1),
\end{split}
\]
where $\mathcal{Q}$ is defined in $(i)$ of Theorem~\ref{th3-1}. 
\end{theorem}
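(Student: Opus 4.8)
The plan is to construct $\omega_\varepsilon$ as a maximizer of the kinetic energy over a suitable admissible class and then read off the concentration from the Kirchhoff--Routh function, i.e.\ the classical vorticity method of Arnol'd--Benjamin--Turkington. Concretely, I would maximize
\[
E_\varepsilon(\omega)=\frac12\int_\Omega \omega\,\mathcal{G}\omega\,dx-\int_\Omega \eta\,\omega\,dx
\]
over
\[
\mathcal{A}_\varepsilon=\Big\{\omega\in L^2(\Omega):\ 0\le\omega\le\varepsilon^{-2},\ \int_\Omega\omega\,dx=1,\ \mathrm{supp}\,\omega\subset\overline{\Omega_*}\Big\},
\]
where $\Omega_*\subset\subset\Omega$ is a fixed bounded open set whose interior contains a neighbourhood of the global minimum set $\mathcal{Q}$; such an $\Omega_*$ exists because, by Theorem~\ref{th3-1}$(i)$, $\mathcal{Q}$ is bounded with $\mathrm{dist}(\mathcal{Q},\partial\Omega)>0$ under the standing hypothesis on $\Gamma/b$. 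Existence of a maximizer $\omega_\varepsilon$ is then the direct method: $\mathcal{A}_\varepsilon$ is convex, bounded and weakly closed in $L^2$, and since $\mathcal{G}$ is compact on the bounded set $\Omega_*$ (Theorem~\ref{thm2-2} together with Rellich's theorem), both the quadratic term $\omega\mapsto\int\omega\,\mathcal{G}\omega$ and the linear term are weakly continuous.

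Next I would identify the profile. Taking the first variation and using the symmetry of $\mathcal{G}$ (clear from \eqref{eq2-2}), the optimality condition under the pointwise bounds $0\le\omega\le\varepsilon^{-2}$ and the fixed-mass constraint is of bang--bang type: there is a Lagrange multiplier $\mu_\varepsilon$ with $\omega_\varepsilon=\varepsilon^{-2}$ a.e.\ on $\{\mathcal{G}\omega_\varepsilon-\eta>\mu_\varepsilon\}$ and $\omega_\varepsilon=0$ a.e.\ on $\{\mathcal{G}\omega_\varepsilon-\eta<\mu_\varepsilon\}$, which is precisely $\omega_\varepsilon=\varepsilon^{-2}\mathbf{1}_{\{\mathcal{G}\omega_\varepsilon-\eta-\mu_\varepsilon>0\}}$. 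That such an $\omega_\varepsilon$ is a weak solution of \eqref{eq1-8} is then a direct check: writing $\psi=\mathcal{G}\omega_\varepsilon-\eta-\mu_\varepsilon$, using that $\omega_\varepsilon$ is constant on $\{\psi>0\}$, integrating by parts in \eqref{eq1-10}, and invoking $\mathrm{div}\,\nabla^\perp\psi=0$ together with the pointwise identity $\nabla^\perp\psi\cdot\nabla\psi=0$ makes the level-set boundary term vanish; the regularity needed is supplied by $\omega_\varepsilon\in L^\infty$, whence $\mathcal{G}\omega_\varepsilon\in W^{2,p}(\Omega_*)$ for all $p<\infty$.

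The core is the asymptotic analysis as $\varepsilon\to0$. Inserting the test patch $\varepsilon^{-2}\mathbf{1}_{B_{r\varepsilon}(x_0)}$ with $x_0\in\mathcal{Q}$ and $|B_{r\varepsilon}|=\varepsilon^2$ and expanding $G=\frac1{2\pi}\ln\frac1{|\cdot|}-H$ from \eqref{eq2-4} yields the energy lower bound
\[
E_\varepsilon(\omega_\varepsilon)\ge \frac1{4\pi}\ln\frac1\varepsilon+\kappa-\tfrac12\min_{\bar\Omega}\mathcal{H}+o(1),
\]
where $\kappa$ is the universal self-energy constant of a uniform unit-mass ball of area $\varepsilon^2$ (independent of $x_0$). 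The crux is the \emph{diameter estimate} $\mathrm{supp}\,\omega_\varepsilon\subset B_{r\varepsilon}(x_\varepsilon)$ for a fixed $r$ and some $x_\varepsilon$. For this I would first combine the pointwise bound $\sup_\Omega\mathcal{G}\omega_\varepsilon\le\frac1{2\pi}\ln\frac1\varepsilon+C$ (the bathtub principle applied to the logarithmic kernel, using $|\{\omega_\varepsilon>0\}|=\varepsilon^2$ and boundedness of $H$ on $\Omega_*$) with the energy lower bound to pin down $\mu_\varepsilon=\frac1{2\pi}\ln\frac1\varepsilon+O(1)$; then, since the Riesz rearrangement inequality makes the ball the maximizer of $\iint\ln\frac1{|x-y|}\,\omega\,\omega$ among sets of measure $\varepsilon^2$, any spreading or splitting of the patch would strictly lower the self-energy below the achieved maximum, forcing concentration into a single ball of radius $O(\varepsilon)$.

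With the $O(\varepsilon)$ diameter available, the Riesz inequality bounds the self-energy of $\omega_\varepsilon$ by that of the ball, and expanding about the centroid $x_\varepsilon$ gives the matching upper bound
\[
E_\varepsilon(\omega_\varepsilon)\le \frac1{4\pi}\ln\frac1\varepsilon+\kappa-\tfrac12\mathcal{H}(x_\varepsilon)+o(1).
\]
Comparing with the lower bound, the constant $\kappa$ cancels and we obtain $\mathcal{H}(x_\varepsilon)\le\min_{\bar\Omega}\mathcal{H}+o(1)$, hence $\mathrm{dist}(x_\varepsilon,\mathcal{Q})\to0$. Since $\mathcal{Q}$ lies in the interior of $\Omega_*$ with $\mathrm{dist}(\mathcal{Q},\partial\Omega)>0$ and the patch has radius $O(\varepsilon)$, for $\varepsilon$ small the support sits strictly inside $\Omega_*$, so the support constraint is inactive, the Euler--Lagrange relation is that of the free problem, and $\omega_\varepsilon$ is a genuine solution of \eqref{eq1-8} of the asserted form. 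I expect the diameter/concentration estimate to be the main obstacle: in the unbounded-strip-with-obstacle geometry one must control the regular part $H$ and the boundary term $\lambda_0\rho(x)\rho(y)$ of \eqref{eq2-4} uniformly near the moving patch, while simultaneously preventing the patch from spreading, splitting, or drifting toward $\partial\Omega$ or to $x_1=\pm\infty$; the coercivity of $\mathcal{H}$ supplied by Theorem~\ref{th3-1}$(i)$ under the hypothesis on $\Gamma/b$ is exactly what localizes the minimizing position and renders the confinement to $\Omega_*$ harmless.
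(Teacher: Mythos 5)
Your overall strategy is the paper's: maximize the kinetic energy over a Turkington-type admissible class, read off the bang--bang profile from the Lagrange multiplier, prove an $O(\varepsilon)$ diameter bound, and identify the limit position through the Kirchhoff--Routh function $\mathcal{H}$ using Theorem~\ref{th3-1}$(i)$. Two of your choices are legitimate variants: confining the support to a fixed $\Omega_*\subset\subset\Omega$ (the paper uses the truncated strip $\Omega_L$, which touches $\partial S$ and $\partial O_0$, and must then invoke the boundary blow-up of $H$ from Proposition~\ref{pro2-8} to keep the patch away from $\partial\Omega_L$; your compact confinement sidesteps that at the cost of having to extend the characterization $\omega_\varepsilon=\varepsilon^{-2}\mathbf{1}_{\{\psi_\varepsilon>0\}}$ from $\Omega_*$ to all of $\Omega$ afterwards, as in Proposition~\ref{pro4-8}), and identifying $x^*$ by matching upper and lower energy expansions (the paper uses the cleaner translation-comparison of Lemma~\ref{lem4-6}, which exploits the translation invariance of the logarithmic self-interaction and avoids computing the self-energy constant $\kappa$).

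There is, however, a genuine gap at the crux you yourself identify: the diameter estimate. Your proposed derivation of $\mu_\varepsilon=\frac1{2\pi}\ln\frac1\varepsilon+O(1)$ by ``combining the pointwise bound $\sup\mathcal{G}\omega_\varepsilon\le\frac1{2\pi}\ln\frac1\varepsilon+C$ with the energy lower bound'' does not produce the \emph{lower} bound on $\mu_\varepsilon$, which is the direction that matters. Writing $\psi_\varepsilon=\mathcal{G}\omega_\varepsilon-\eta-\mu_\varepsilon$ one has $2\mathcal{E}(\omega_\varepsilon)=\int\omega_\varepsilon\psi_\varepsilon\,dx+\mu_\varepsilon+O(1)$, and bounding $\int\omega_\varepsilon(\psi_\varepsilon)_+\,dx$ by $\sup(\mathcal{G}\omega_\varepsilon-\eta)-\mu_\varepsilon$ makes $\mu_\varepsilon$ cancel: you only recover the energy upper bound, not $\mu_\varepsilon\ge\frac1{2\pi}\ln\frac1\varepsilon-C$. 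The missing ingredient is the estimate $\int\omega_\varepsilon(\tilde\psi_\varepsilon)_+\,dx\le C$ of Lemma~\ref{lem4-3}, proved by testing $-\Delta\tilde\psi_\varepsilon=\omega_\varepsilon$ with $(\tilde\psi_\varepsilon)_+\in H^1_0(\Omega)$ and exploiting $|\{\tilde\psi_\varepsilon>0\}|\le\varepsilon^2$ together with the embedding $W^{1,1}\subset L^2$ in two dimensions; this is the quantitative heart of the method and cannot be skipped. Likewise, the assertion that ``any spreading or splitting would strictly lower the self-energy, forcing concentration into a single ball'' is the right intuition but not an argument: the actual mechanism is that the multiplier lower bound gives $\int\ln\frac1{|y-x|}\,\omega_\varepsilon(y)\,dy\ge\ln\frac1\varepsilon-C$ on the support, after which a quantitative concentration lemma (Lemma~\ref{lem4-4}) yields $\mathrm{diam}(\mathrm{supp}\,\omega_\varepsilon)\le R\varepsilon$. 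Without the multiplier lower bound your whole chain --- diameter bound, energy upper expansion, and hence $\mathrm{dist}(x_\varepsilon,\mathcal{Q})\to0$ --- is unsupported.
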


The  proof of Theorem~\ref{thm1-4} is based on finding  maximizers of kinetic energy 
\begin{equation}
\mathcal{E} (\omega)=\frac{1}{2}\int_{\Omega_L} \omega \mathcal{G} \omega \;dx-\int_{\Omega_L} \omega \eta \;dx
\end{equation}
over the following admissible set 
\begin{equation}
\mathcal{A}_\varepsilon:=\Big\{
\omega\in L^\infty(\Omega)\big| \int_\Omega \omega\; dx=1,\;\; 0\leq \omega\leq \frac{1}{\varepsilon^2},\;\; \text{supp} (\omega)\subset \Omega_L
\Big\},
\end{equation}
where 
$
\Omega_L=\{
x\in \Omega|\; -L<x_1<L
\}
$
for some large $L>0$. It is not difficult to find a maximizer $\omega_\varepsilon$ of $\mathcal{E}$ over $\mathcal{A}_\varepsilon$
and prove that it is a patch-type function (see Lemma~\ref{lem4-1}). To show $\omega_\varepsilon$  a weak solution of \eqref{eq1-8}, following Turkington's idea (see \cite{Tur83}) it needs to prove that the vorticity set $\text{supp} (\omega_\varepsilon)$ must be away from the boundary of $\Omega_L$, which is obtained by establishing  the asymptotic estimates of $\omega_\varepsilon$. Let us remark that the asymptotic estimates highly rely on the structure  of Green function $G(y,x)$, which is shown in Theorem~\ref{thm2-2}, and the locations of minimum point of $\mathcal{H}$, which is shown in Theorem~\ref{th3-1}.

When the velocity at infinity and circulation around the obstacle are large enough, we can find  vortex-patch solution whose vorticity concentrates near the obstacle $O_0$. 

\begin{theorem}\label{thm1-5} There exists a large constant $b_0>0$ such that for $b>b_0$ and $\Gamma/b \geq \pi \int_{\mathbb{R}\times \{x_2=\pi\}} | \frac{\partial \rho}{\partial \mathbf{n}} |\; dS_x$,  there exists $\varepsilon_0>0$,  depending on $b$ and $\Gamma$,  so that for $0<\varepsilon<\varepsilon_0$, \eqref{eq1-8} has a solution $\omega_\varepsilon$ with the form 
\[
\omega_\varepsilon=\frac{1}{\varepsilon^2} \mathbf{1}_{\{
\mathcal{G} \omega_\varepsilon-\eta-\mu_\varepsilon>0
\}}\;\;\; a.e. \;\;\text{in}\;\; \Omega,
\]
 where $\mu_\varepsilon$ is a Lagrange multiplier so that  
$
\int_\Omega \omega_\varepsilon\;dx =1. 
$
The vorticity set $supp(\omega_\varepsilon)$ satisfies 
\[
supp(\omega_\varepsilon) \subset B_{r\varepsilon}(x_\varepsilon), 
\]
where $r>0$ is a constant and $x_\varepsilon\in \Omega$.  Moreover, as $\varepsilon\to 0$, 
 \[
 \begin{split}
& \frac{\theta_1}{b+\lambda_{b,\Gamma}}<dist (x_\varepsilon,\partial O_0)< \frac{\theta_2}{b+\lambda_{b,\Gamma}},
\quad \mu_\varepsilon=\frac{1}{2\pi}\ln \frac{1}{\varepsilon}+O(1),
\end{split}
\]
where $\theta_1,\theta_2$ are two positive constants independent of $\varepsilon,b,\Gamma$, $\lambda_{b,\Gamma}$ is a positive constant depending on $b$ and $\Gamma$ (see \eqref{eq2-24}).. 

\end{theorem}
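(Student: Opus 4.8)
The plan is to prove Theorem~\ref{thm1-5} by a constrained maximization of the kinetic energy $\mathcal{E}$, running in close parallel to the proof of Theorem~\ref{thm1-4} but replacing the global far-field cutoff $\Omega_L$ by a \emph{local} constraint that traps the vorticity in the thin annular region $D_{b,\Gamma}$ surrounding the obstacle, and replacing the global minimum set $\mathcal{Q}$ by the local minimum set $\mathcal{Q}_1$ furnished by part $(ii)$ of Theorem~\ref{th3-1}. Concretely, I fix $b>b_0$ and $\Gamma$ with $\Gamma/b\geq\pi\int_{\mathbb{R}\times\{x_2=\pi\}}|\partial\rho/\partial\mathbf{n}|\,dS_x$; since $\overline{O}_0$ is compactly contained in $S$ and $D_{b,\Gamma}$ collapses onto $\partial O_0$ as $b$ grows, for $b>b_0$ the closure $\overline{D}_{b,\Gamma}$ is a bounded set with $\text{dist}(\overline{D}_{b,\Gamma},\partial\Omega)>0$, so no auxiliary $\Omega_L$ is needed. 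I would then maximize $\mathcal{E}$ over
\begin{equation}
\mathcal{A}_\varepsilon^{b,\Gamma}:=\Big\{\omega\in L^\infty(\Omega)\;\big|\;\int_\Omega\omega\,dx=1,\ 0\leq\omega\leq\tfrac{1}{\varepsilon^2},\ \text{supp}(\omega)\subset\overline{D}_{b,\Gamma}\Big\}.
\end{equation}

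Since $\overline{D}_{b,\Gamma}$ is bounded, $\mathcal{A}_\varepsilon^{b,\Gamma}$ is convex, and bounded and closed in the weak $L^2$ topology; because $\mathcal{G}$ maps $L^2(\Omega)$ into $H^1(\Omega)$ continuously and $H^1\hookrightarrow L^2(\overline{D}_{b,\Gamma})$ compactly, the quadratic term $\tfrac12\int\omega\mathcal{G}\omega$ is weakly continuous while the linear term $-\int\omega\eta$ is weakly continuous; hence $\mathcal{E}$ attains a maximizer $\omega_\varepsilon\in\mathcal{A}_\varepsilon^{b,\Gamma}$. The bathtub principle, exactly as in Lemma~\ref{lem4-1}, identifies $\omega_\varepsilon$ with the patch $\omega_\varepsilon=\varepsilon^{-2}\mathbf{1}_{\{\mathcal{G}\omega_\varepsilon-\eta-\mu_\varepsilon>0\}}$ restricted to $\overline{D}_{b,\Gamma}$, where $\mu_\varepsilon$ is the Lagrange multiplier enforcing unit mass. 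At this stage $\omega_\varepsilon$ only solves the constrained variational problem; the support constraint may still be active.

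Next I would establish the asymptotic estimates driving concentration. Writing $G(y,x)=\frac{1}{2\pi}\ln\frac{1}{|y-x|}-H(y,x)$ and using the splitting of Theorem~\ref{thm2-2}, the energy of any unit-mass patch of diameter $O(\varepsilon)$ centered near a point $x_0\in D_{b,\Gamma}$ expands as
\begin{equation}
\mathcal{E}(\omega)=\frac{1}{2\pi}\ln\frac{1}{\varepsilon}+C-\frac{1}{2}\mathcal{H}(x_0)+o(1),\qquad\varepsilon\to0,
\end{equation}
where $C$ is the universal self-energy constant of the core profile and $\mathcal{H}=H(x,x)+2\eta(x)$. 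Testing $\mathcal{E}(\omega_\varepsilon)$ against the disk $\omega^\ast=\varepsilon^{-2}\mathbf{1}_{B_{\varepsilon/\sqrt{\pi}}(x^\ast)}$ centered at any $x^\ast\in\mathcal{Q}_1$ (which lies in the interior of $D_{b,\Gamma}$, hence is admissible for small $\varepsilon$) gives $\mathcal{E}(\omega_\varepsilon)\geq\mathcal{E}(\omega^\ast)=\frac{1}{2\pi}\ln\frac{1}{\varepsilon}+C-\frac{1}{2}\min_{\overline{D}_{b,\Gamma}}\mathcal{H}+o(1)$. Together with the upper bound from the expansion, this pins down $\mu_\varepsilon=\frac{1}{2\pi}\ln\frac1\varepsilon+O(1)$, forces the vorticity set into a ball $B_{r\varepsilon}(x_\varepsilon)$ (the diameter bound following from the upper bound on $\int\omega_\varepsilon\mathcal{G}\omega_\varepsilon$ combined with the strength bound $\omega_\varepsilon\leq\varepsilon^{-2}$), and yields $\mathcal{H}(x_\varepsilon)\to\min_{\overline{D}_{b,\Gamma}}\mathcal{H}$, whence $\text{dist}(x_\varepsilon,\mathcal{Q}_1)\to0$.

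The crux—and the step I expect to be hardest—is removing the artificial constraint, i.e.\ showing $\text{supp}(\omega_\varepsilon)$ sits strictly inside $D_{b,\Gamma}$, so that $\omega_\varepsilon$ is a genuine weak solution of \eqref{eq1-8} rather than merely a constrained critical point. Here part $(ii)$ of Theorem~\ref{th3-1} is essential: since $\text{dist}(\mathcal{Q}_1,\partial D_{b,\Gamma})>0$ and $x_\varepsilon\to\mathcal{Q}_1$ while $\text{supp}(\omega_\varepsilon)\subset B_{r\varepsilon}(x_\varepsilon)$ shrinks to a point, for $\varepsilon$ below some $\varepsilon_0=\varepsilon_0(b,\Gamma)$ the support lies in a compact subset of the open region $D_{b,\Gamma}$; then the level set $\{\mathcal{G}\omega_\varepsilon-\eta-\mu_\varepsilon>0\}$ does not feel the constraint and the Euler--Lagrange relation \eqref{eq1-10} holds against all $\phi\in C_0^\infty(\Omega)$. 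The delicate points are (a) making the diameter bound $\text{supp}(\omega_\varepsilon)\subset B_{r\varepsilon}(x_\varepsilon)$ effective uniformly down toward $\partial O_0$, where the Green's function is singular, which uses the explicit splitting of Theorem~\ref{thm2-2} to isolate the Dirichlet part $G_0$ from the harmonic correction $\lambda_0\rho(x)\rho(y)$; and (b) verifying that the two barriers of $\mathcal{H}$ at $\text{dist}(x,\partial O_0)=\theta_1/(b+\lambda_{b,\Gamma})$ and $=\theta_2/(b+\lambda_{b,\Gamma})$ genuinely confine the maximizing configuration, which is exactly the quantitative content secured in the proof of Theorem~\ref{th3-1}$(ii)$. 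Once the constraint is shown inactive, the stated bounds $\theta_1/(b+\lambda_{b,\Gamma})<\text{dist}(x_\varepsilon,\partial O_0)<\theta_2/(b+\lambda_{b,\Gamma})$ and $\mu_\varepsilon=\frac{1}{2\pi}\ln\frac1\varepsilon+O(1)$ follow from $x_\varepsilon\to\mathcal{Q}_1\subset D_{b,\Gamma}$ together with the energy expansion, completing the proof.
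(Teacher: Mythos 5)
Your proposal is correct and follows essentially the same route as the paper, whose entire proof of Theorem~\ref{thm1-5} is the one-line instruction to rerun the Section~3 variational argument with $\Omega_L$ replaced by $D_{b,\Gamma}$ and with part $(ii)$ of Theorem~\ref{th3-1} (i.e.\ $\mathcal{Q}_1$ and $\text{dist}(\mathcal{Q}_1,\partial D_{b,\Gamma})>0$) supplying the confinement that makes the support constraint inactive. The only slip is the leading coefficient in your energy expansion, which should be $\frac{1}{4\pi}\ln\frac{1}{\varepsilon}$ rather than $\frac{1}{2\pi}\ln\frac{1}{\varepsilon}$ (cf.\ Lemmas~\ref{lem4-2} and~\ref{lem4-9}); this does not affect the conclusion $\mu_\varepsilon=\frac{1}{2\pi}\ln\frac{1}{\varepsilon}+O(1)$.
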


\begin{corollary} Under the assumptions in $(iii)$ of Theorem~\ref{th3-1}, expect the solution found in Theorem~\ref{thm1-5}, there exists another  vortex-patch solution $\omega_\varepsilon$ of \eqref{eq1-8}, whose vorticity concentrates near $\mathcal{Q}_2$ as $\varepsilon\to0$, namely away from the obstacle $O_0$. Here, $\mathcal{Q}_2$ is defined in  $(iii)$  of Theorem~\ref{th3-1}. 
\end{corollary}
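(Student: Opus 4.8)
The plan is to repeat the constrained-maximization scheme used for Theorem~\ref{thm1-4} and Theorem~\ref{thm1-5}, but with the admissible class localized away from the obstacle so that the vorticity is forced to concentrate near $\mathcal{Q}_2$. Concretely, fix $\delta>0$ as in part $(iii)$ of Theorem~\ref{th3-1} and maximize the kinetic energy $\mathcal{E}$ over the restricted class
\[
\mathcal{A}_\varepsilon^\delta:=\Big\{\omega\in L^\infty(\Omega)\ \Big|\ \int_\Omega\omega\,dx=1,\ 0\le\omega\le\tfrac{1}{\varepsilon^2},\ \text{supp}(\omega)\subset\overline{\Omega}_\delta\cap\Omega_L\Big\},
\]
where $\Omega_\delta=\{x\in\Omega:\text{dist}(x,\partial O_0)>\delta\}$ and $L>0$ is large. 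I would first show that $\mathcal{E}$ attains its maximum on $\mathcal{A}_\varepsilon^\delta$ at some $\omega_\varepsilon^\delta$ of patch type, i.e.
\[
\omega_\varepsilon^\delta=\frac{1}{\varepsilon^2}\mathbf{1}_{\{\mathcal{G}\omega_\varepsilon^\delta-\eta-\mu_\varepsilon>0\}}\quad\text{a.e. in }\Omega,
\]
for a Lagrange multiplier $\mu_\varepsilon$. This is identical to Lemma~\ref{lem4-1}: the extra constraint $\text{supp}(\omega)\subset\overline{\Omega}_\delta$ is closed under the weak-$*$ topology and is preserved by the bathtub rearrangement used to produce the patch profile, so the existence and structure arguments carry over verbatim.

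Next I would run the asymptotic analysis exactly as in the proof of Theorem~\ref{thm1-4}. Using the decomposition $G(y,x)=\frac{1}{2\pi}\ln\frac{1}{|y-x|}-H(y,x)$ from \eqref{eq2-4}, the energy of a unit-mass patch of size $\varepsilon$ centred near a point $x$ splits into a large self-energy $\frac{1}{4\pi}\ln\frac{1}{\varepsilon}+O(1)$, independent of the location to leading order, and a location-dependent part governed by $-\frac{1}{2}H(x,x)-\eta(x)=-\frac{1}{2}\mathcal{H}(x)$. Maximizing $\mathcal{E}$ over $\mathcal{A}_\varepsilon^\delta$ therefore amounts, to leading order, to minimizing $\mathcal{H}$ over $\overline{\Omega}_\delta$. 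Comparing $\mathcal{E}(\omega_\varepsilon^\delta)$ with a trial patch supported in a small ball around a point of $\mathcal{Q}_2$ yields a lower bound, while an upper bound follows from the Green's representation together with the shrinking-support estimate $\text{diam}(\text{supp}(\omega_\varepsilon^\delta))=O(\varepsilon)$. These force the concentration point $x_\varepsilon^\delta$ to satisfy $\mathcal{H}(x_\varepsilon^\delta)\to\min_{\overline{\Omega}_\delta}\mathcal{H}$, hence $\text{dist}(x_\varepsilon^\delta,\mathcal{Q}_2)\to0$ and $\mu_\varepsilon=\frac{1}{2\pi}\ln\frac{1}{\varepsilon}+O(1)$.

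The decisive step is to remove the artificial constraints so that $\omega_\varepsilon^\delta$ solves the free problem \eqref{eq1-8}. Since part $(iii)$ of Theorem~\ref{th3-1} guarantees $\text{dist}(\mathcal{Q}_2,\partial\Omega_\delta)>0$ and that $\mathcal{Q}_2$ is bounded, for all sufficiently small $\varepsilon$ the patch $\text{supp}(\omega_\varepsilon^\delta)\subset B_{r\varepsilon}(x_\varepsilon^\delta)$ lies strictly inside $\Omega_\delta$ and strictly inside $\Omega_L$. Thus both support constraints are inactive, the constrained maximizer is a free critical point, and the standard variational identity (testing with volume-preserving rearrangements, as in Turkington's argument) shows that $\omega_\varepsilon^\delta$ is a weak solution of \eqref{eq1-8} in the sense of Definition~\ref{de2-1}, with the stated profile and mass normalization. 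I expect this localization — proving that the maximizer does not slide to the inner boundary $\{\text{dist}(x,\partial O_0)=\delta\}$ or escape to $|x_1|\to\infty$ — to be the main obstacle; it is precisely where the asymptotic estimates of $\mathcal{H}$ near the obstacle and at infinity (Theorem~\ref{th3-1} and the representation \eqref{eq1-18e}) are needed, since on $\partial\Omega_\delta$ and near infinity $\mathcal{H}$ strictly exceeds its value on $\mathcal{Q}_2$.

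Finally, to see that this solution is genuinely new, I would compare concentration locations. The solution of Theorem~\ref{thm1-5} satisfies $\text{dist}(x_\varepsilon,\partial O_0)<\theta_2/(b+\lambda_{b,\Gamma})$, whereas the present solution satisfies $\text{dist}(x_\varepsilon^\delta,\partial O_0)>\delta$ for small $\varepsilon$. Because part $(iii)$ places us in the regime $b>b_0$ with $b_0$ large, I may enlarge $b_0$ if necessary so that $\theta_2/(b+\lambda_{b,\Gamma})<\delta$; then the two supports are disjoint for all small $\varepsilon$, and the two vortex-patch solutions are distinct.
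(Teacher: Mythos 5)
Your proposal is correct and follows essentially the same route the paper intends: the corollary is proved by rerunning the variational scheme of Section 3 with the admissible class localized to $\overline{\Omega}_\delta\cap\Omega_L$, using part $(iii)$ of Theorem~\ref{th3-1} (the strict gap $\mathrm{dist}(\mathcal{Q}_2,\partial\Omega_\delta)>0$ together with the behaviour of $\mathcal{H}$ at infinity) to show the constraints are inactive, exactly as the paper does for Theorem~\ref{thm1-5} by replacing $\Omega_L$ with $D_{b,\Gamma}$. Your closing observation that enlarging $b_0$ forces $\theta_2/(b+\lambda_{b,\Gamma})<\delta$, so the two solutions have disjoint supports and are genuinely distinct, is a correct and worthwhile addition that the paper leaves implicit.
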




This paper is organized as follows. In section 2, we will show the existence and basic estimates of  $G(y,x)$ and $\eta(x)$, and prove Theorem~\ref{thm2-2}, Theorem~\ref{th3-1}. In section 3, we will use a variational argument to prove Theorem~\ref{thm1-4} and theorem~\ref{thm1-5}. In appendix A, we will prove the existence and uniqueness of a harmonic problem, while in appendix B we will establish two Green's representation formulas.

\section{Existence and basic estimates for  $G(y,x)$ and $\eta(x)$}

In this section we will establish the existence and basic estimates of Green function $G(y,x)$ and background stream function $\eta(x)$, and hence prove Theorem~\ref{thm2-2} and Theorem~\ref{th3-1}.

\subsection{The Green function $G(y,x)$}

The purpose of this subsection is to show the existence of Green function of the prescribed circulation problem \eqref{eq2-1}, and further to establish the asymptotic behaviors for it, which play an important role in the proof of  our main results. 
Let us begin with the following lemma. 
\begin{lemma}\label{lem2-1y}
For any $f\in L^2(\Omega)$,   there exists a unique weak solution $u\in H^1(\Omega)$ of \eqref{eq2-1}. 
\end{lemma}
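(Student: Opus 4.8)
The plan is to recast \eqref{eq2-1} as a variational problem on a suitable closed subspace of $H^1(\Omega)$ and invoke the Lax--Milgram theorem. The undetermined constant on $\partial O_0$ and the zero net-flux condition are dual to each other and should be handled together: the former is imposed as an \emph{essential} constraint built into the function space, while the latter will drop out as a \emph{natural} boundary condition of the weak formulation. Concretely, I would work in
$$
V := \{ v \in H^1(\Omega) : v = 0 \text{ on } \partial S,\ v|_{\partial O_0} \text{ is constant} \},
$$
which is a closed subspace of $H^1(\Omega)$: the trace map $H^1(\Omega)\to H^{1/2}(\partial\Omega)$ is continuous, and both requirements (vanishing on $\partial S$, and $v|_{\partial O_0}-\frac{1}{|\partial O_0|}\int_{\partial O_0}v\,dS=0$) are continuous linear conditions. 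The associated weak formulation is to find $u\in V$ with
$$
a(u,v):=\int_\Omega \nabla u\cdot \nabla v\,dx=\int_\Omega f v\,dx \qquad\text{for all } v\in V .
$$

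The crucial analytic input — and the reason the unboundedness of $\Omega$ causes no difficulty — is a global Poincar\'e inequality. Given $v\in V$, extend it to all of $S$ by setting it equal to its boundary constant $c_v$ inside $O_0$; the resulting $\tilde v$ lies in $H^1(S)$, vanishes on $\partial S$, and has $\nabla\tilde v=0$ in $O_0$. Since $\tilde v$ vanishes on both edges $\{x_2=0\}$ and $\{x_2=\pi\}$, the one-dimensional Dirichlet--Poincar\'e inequality on $(0,\pi)$ (first eigenvalue $1$), integrated in $x_1$, gives $\|\tilde v\|_{L^2(S)}\le \|\partial_2\tilde v\|_{L^2(S)}\le \|\nabla\tilde v\|_{L^2(S)}$, hence $\|v\|_{L^2(\Omega)}\le \|\nabla v\|_{L^2(\Omega)}$. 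It follows that $a(\cdot,\cdot)$ is bounded and coercive on $V$, since $a(v,v)=\|\nabla v\|_{L^2}^2\ge \tfrac12\|v\|_{H^1}^2$, and that the functional $v\mapsto\int_\Omega fv$ is bounded, since $|\int_\Omega fv|\le \|f\|_{L^2}\|v\|_{L^2}\le \|f\|_{L^2}\|v\|_{H^1}$ by Cauchy--Schwarz and Poincar\'e. Lax--Milgram then produces a \emph{unique} $u\in V$ solving the variational identity.

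It then remains to check that $u$ solves \eqref{eq2-1} in the required sense. Testing against $\phi\in C_0^\infty(\Omega)\subset V$ yields $-\Delta u=f$ in $\mathcal{D}'(\Omega)$; the conditions $u=0$ on $\partial S$ and $u=$const on $\partial O_0$ hold by construction of $V$. For the flux condition I would use elliptic regularity up to the $C^3$ boundary $\partial O_0$, so that $\partial u/\partial\mathbf{n}$ is a well-defined trace and Green's identity is applicable; choosing any $v\in V$ with $v|_{\partial O_0}=1$ (for instance the function $\rho$ of \eqref{eq2-3}) and combining $a(u,v)=\int_\Omega fv$ with Green's identity leaves precisely $\int_{\partial O_0}\frac{\partial u}{\partial\mathbf{n}}\,dS_x=0$. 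The decay $u\to0$ as $|x|\to\infty$ is encoded in $u\in H^1(\Omega)$ (the tails $\int_{|x_1|>R}(|u|^2+|\nabla u|^2)\,dx\to0$ as $R\to\infty$), with pointwise decay following from interior and boundary elliptic estimates on unit cells receding to infinity.

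I expect the main obstacle to be establishing coercivity on the \emph{unbounded} domain while simultaneously accommodating the \emph{free} constant on $\partial O_0$: a naive $H_0^1$-type Poincar\'e inequality is unavailable on an infinite strip, and the net-flux condition is not an essential constraint that can be imposed in the space. The extension-by-constant device across $O_0$ is what resolves both issues at once — it delivers the global Poincar\'e inequality needed for coercivity, and it makes transparent that the weak formulation over $V$ forces the net flux through $\partial O_0$ to vanish as the natural boundary condition conjugate to the undetermined constant.
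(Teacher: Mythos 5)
Your proof is correct, but it is organized differently from the paper's. The paper splits $u=u_1+u_2$, where $u_1\in H_0^1(\Omega)$ solves the pure Dirichlet problem $-\Delta u_1=f$ and $u_2$ is the harmonic correction with constant trace on $\partial O_0$ and compensating flux $\Lambda=-\int_{\partial O_0}\partial u_1/\partial\mathbf{n}\,dS_x$; the latter is then delegated to Lemma~\ref{lemA-1}, which minimizes $J(u)=\tfrac12\int_\Omega|\nabla u|^2\,dx-\Lambda\bigl(\tfrac{1}{|\partial O_0|}\int_{\partial O_0}u\,dS_x\bigr)$ over the set $K$ of $H^1$ functions vanishing on $\partial S$ and equal to $f+\mathrm{const}$ on $\partial O_0$. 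You instead run a single Lax--Milgram argument on $V$ and let the zero-flux condition appear as the natural boundary condition conjugate to the free constant; this is the $\Lambda=0$ specialization of the appendix's variational idea with the source term $f$ absorbed directly, and both arguments ultimately rest on the same coercivity mechanism (extension by the boundary constant into $O_0$ plus the cross-strip Poincar\'e inequality, which the paper uses in Lemma~\ref{lem2-2y}). What your route buys: existence and uniqueness in one step, and no need to give meaning to the normal-derivative trace $\partial u_1/\partial\mathbf{n}$ of the merely variational solution $u_1$ on $\partial O_0$ before the correction $u_2$ can even be posed. What the paper's route buys: Lemma~\ref{lemA-1} handles an arbitrary prescribed flux $\Lambda$ and inhomogeneous boundary data, and is needed elsewhere (for $\rho$, $\xi$ and $\beta$), so the decomposition reuses machinery that must be built anyway. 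Two small points you should make explicit if you write this up: the recovery of $\int_{\partial O_0}\partial u/\partial\mathbf{n}\,dS_x=0$ via Green's identity on the unbounded $\Omega$ needs a truncation to $\Omega_L$ with the flux through $\{|x_1|=L\}$ killed by the $L^2$ decay of $u,\nabla u$ and the exponential decay of the test function $\rho$; and uniqueness among weak solutions of \eqref{eq2-1} (not just among variational solutions in $V$) requires the converse integration by parts showing that any weak solution with zero net flux satisfies $a(u,v)=\int_\Omega fv\,dx$ for all $v\in V$.
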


\begin{proof}Let  $f\in L^2(\Omega)$. We decompose $u=u_1+u_2$, where $u_1$ satisfies  
\begin{equation}\label{eq2-1y}
\begin{cases}
-\Delta u_1=f\quad \text{in}\; \Omega,\\
u_1=0\;\;\; \text{on}\; \partial \Omega,
\quad
u_1 \to 0\;\; \text{as}\;\; |x|\to \infty, 
\end{cases}
\end{equation}
and $u_2$ satisfies 
\begin{equation}\label{eq2-2y}
\begin{cases}
\Delta u_2=0\quad \text{in}\; \Omega,\\
u_2=\text{constant} \;\;  \text{on}\; \partial O_0,
\quad
u_2=0 \;\; \text{on}\; \partial S, 
\\
u_2 \to 0\;\; \text{as}\;\; |x|\to \infty, 
\\
\int_{\partial O_0} \frac{\partial u_2}{\partial \mathbf{n}}\; dS_x=-\int_{\partial O_0} \frac{\partial u_1}{\partial \mathbf{n}}\; dS_x.
\end{cases}
\end{equation}
It is easy to see that \eqref{eq2-1y} has a unique solution $u_1\in H_0^1(\Omega)$. Then the conclusion follows from Lemma~\ref{lemA-1}. 
\end{proof}

As a consequence of Lemma~\ref{lem2-1y},  one can define a Green operator $\mathcal{G}: L^2(\Omega) \to H^1(\Omega)$ such that  the weak solution $u\in H^1(\Omega)$ of \eqref{eq2-1} is equal to $\mathcal{G}f$, namely,  $u=\mathcal{G}f$. Moreover, we have 
\begin{lemma} \label{lem2-2y}
The Green operator $\mathcal{G}$ is continuous from $L^2(\Omega) $ to $H^1(\Omega)$. 
\end{lemma}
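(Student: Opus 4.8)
The plan is to reuse the decomposition $\mathcal{G}f=u=u_1+u_2$ from the proof of Lemma~\ref{lem2-1y} and to bound each summand by $\|f\|_{L^2(\Omega)}$. As $\mathcal{G}$ is linear (this is contained in the uniqueness assertion of Lemma~\ref{lem2-1y}), proving continuity amounts to establishing a single estimate $\|\mathcal{G}f\|_{H^1(\Omega)}\le C\|f\|_{L^2(\Omega)}$ with $C$ independent of $f$.

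For $u_1\in H_0^1(\Omega)$ I would argue by the standard energy estimate. Since the strip $S$ has finite width $\pi$, the Poincar\'e inequality $\|u_1\|_{L^2(\Omega)}\le C\|\nabla u_1\|_{L^2(\Omega)}$ holds on $H_0^1(\Omega)$; testing \eqref{eq2-1y} against $u_1$ gives $\int_\Omega|\nabla u_1|^2\,dx=\int_\Omega f u_1\,dx\le\|f\|_{L^2}\|u_1\|_{L^2}$, whence $\|\nabla u_1\|_{L^2}\le C\|f\|_{L^2}$ and therefore $\|u_1\|_{H^1}\le C\|f\|_{L^2}$.

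For $u_2$ the point is to control the flux datum $-\int_{\partial O_0}\tfrac{\partial u_1}{\partial\mathbf n}\,dS$ that drives \eqref{eq2-2y}. I would test the equation $-\Delta u_1=f$ against the harmonic function $\rho$ of \eqref{eq2-3} via Green's second identity. Using $\Delta\rho=0$, the boundary values $\rho=1,\ u_1=0$ on $\partial O_0$ and $\rho=0,\ u_1=0$ on $\partial S$, and the decay of both functions at infinity, all boundary contributions collapse to the single term $\int_{\partial O_0}\tfrac{\partial u_1}{\partial\mathbf n}\,dS$, yielding the identity $\int_{\partial O_0}\tfrac{\partial u_1}{\partial\mathbf n}\,dS=-\int_\Omega\rho f\,dx$ (elliptic regularity up to the $C^3$ boundary $\partial O_0$ makes this normal flux well defined). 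Consequently $|\int_{\partial O_0}\tfrac{\partial u_1}{\partial\mathbf n}\,dS|\le\|\rho\|_{L^2}\|f\|_{L^2}\le C\|f\|_{L^2}$. Since $u_2$ is the unique decaying harmonic function whose constant trace on $\partial O_0$ is fixed by this flux, it is a scalar multiple of $\rho$ with coefficient proportional to the flux; hence $\|u_2\|_{H^1}\le C\,|\text{flux}|\le C\|f\|_{L^2}$, where I use that $\|\rho\|_{H^1(\Omega)}$ is finite because $\rho\in C^2(\bar\Omega)$ decays at infinity. The triangle inequality then gives the desired bound.

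The main obstacle is making the Green identity and the vanishing of the boundary terms at infinity rigorous on the unbounded domain $\Omega$. I would carry out the integration by parts on the truncated domain $\Omega_L=\{x\in\Omega:-L<x_1<L\}$, whose boundary carries two extra vertical segments at $x_1=\pm L$, and then let $L\to\infty$; the decay $\rho,|\nabla\rho|\to0$ and $u_1,|\nabla u_1|\to0$ at infinity, together with the $L^2$-integrability of the gradients, forces the flux across these segments to tend to $0$, so no term at infinity survives. A secondary point is the continuous dependence $\|u_2\|_{H^1}\le C\,|\text{flux}|$, which I would take from Lemma~\ref{lemA-1} (or read off directly from $u_2=c\rho$); it is precisely here that the one-dimensionality of the space of admissible decaying harmonic functions enters.
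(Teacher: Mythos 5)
Your argument is correct, but it follows a different route from the paper's. The paper does not decompose $u=u_1+u_2$ at this stage; it writes the difference equation $-\Delta(u_n-u_0)=f_n-f_0$, tests it against $u_n-u_0$ itself, and observes that the only boundary term, $\int_{\partial O_0}\frac{\partial(u_n-u_0)}{\partial \mathbf{n}}(u_n-u_0)\,dS_x$, vanishes because $u_n-u_0$ is \emph{constant} on $\partial O_0$ while the zero-flux condition in \eqref{eq2-1} forces $\int_{\partial O_0}\frac{\partial(u_n-u_0)}{\partial \mathbf{n}}\,dS_x=0$; H\"older plus the Poincar\'e inequality on the strip then give $\|\nabla(u_n-u_0)\|_{L^2}\le C\|f_n-f_0\|_{L^2}$ in one stroke. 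Your version reaches the same estimate by splitting off the Dirichlet part $u_1$ of \eqref{eq2-1y}, identifying the harmonic correction $u_2$ of \eqref{eq2-2y} as $c\rho$ with $c=\lambda_0\int_{\partial O_0}\frac{\partial u_2}{\partial\mathbf{n}}\,dS_x$, and controlling that flux via the Green identity $\int_{\partial O_0}\frac{\partial u_1}{\partial\mathbf{n}}\,dS_x=-\int_\Omega\rho f\,dx$. This is more work --- you need elliptic regularity near the $C^3$ boundary to make the normal trace of $u_1$ meaningful for $f\in L^2$ only, the exhaustion argument on $\Omega_L$ to kill the terms at infinity, and $\rho\in H^1(\Omega)$ (which does hold, by the exponential decay in Lemma~\ref{lem2-5}) --- but it buys you more: your flux identity and the representation $u_2=\lambda_0\bigl(\int_\Omega\rho f\,dx\bigr)\rho$ are exactly the content of Theorem~\ref{thm2-2}, so your proof of continuity simultaneously derives the kernel decomposition \eqref{eq2-2}, whereas the paper's energy argument is purely qualitative about the operator norm. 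Both proofs ultimately rest on the same two structural facts: the Poincar\'e inequality on the finite-width strip and the fact that the zero-flux constraint pins down the one-dimensional space of admissible decaying harmonic corrections.
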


\begin{proof} Suppose that $f_n\to f_0$ in $L^2(\Omega)$ as $n\to \infty$. Let $u_n=\mathcal{G}f_n$ and  $u_0=\mathcal{G}f_0$. Then $u=u_n\in H^1(\Omega)$ (resp. $u=u_0$) is a weak solution of \eqref{eq2-1} with $f=f_n$ (resp. $f=f_0$). Multiplying  $-\Delta (u_n-u_0)=f_n-f_0$ by $u_n-u_0$ and integrating in $\Omega$, we get 
\[
\int_\Omega (-\Delta (u_n-u_0)) (u_n-u_0) dx=\int_\Omega (f_n-f_0) (u_n -u_0)dx.
\]
 Since $u_n, u_0=0$  on $\partial S$ and  $u_n, u_0=constant$  on $\partial O_0$, we  can extend  $u_n, u_0$ to $S$ such that $u_n, u_0\in H_0^1(S)$. So we have
\[
\begin{split}
\int_\Omega (-\Delta (u_n -u_0)) (u_n -u_0) dx=&\int_\Omega |\nabla (u_n -u_0)|^2 dx-\int_{\partial O_0} \frac{\partial (u_n -u_0)}{\partial \mathbf{n}} (u_n -u_0) dS_x
\\
=&\int_\Omega |\nabla (u_n -u_0)|^2 dx.
\end{split}
\]
By H\"older inequality and Poincar\'e inequality, we have 
\[
\begin{split}
\int_\Omega |\nabla (u_n -u_0)|^2 dx=& \int_\Omega (f_n-f_0) (u_n -u_0) dx \leq \|f_n-f_0\|_{L^2(\Omega)} \|u_n -u_0\|_{L^2(\Omega)} 
 \\
 \leq& C \|f_n-f_0\|_{L^2(\Omega)} \|\nabla(u_n -u_0)\|_{L^2(\Omega)},
 \end{split}
\]
which, together with $f_n\to f_0$ in $L^2(\Omega)$, gives 
\[
\|\nabla(u_n -u_0)\|_{L^2(\Omega)}\to 0\;\;\;\text{as}\;\;n\to \infty.
\]
Using  Poincar\'e inequality again, we obtain that as $n\to \infty$, $u_n\to u_0$ in  $H^1(\Omega)$.
\end{proof}

We are able to prove Theorem~\ref{thm2-2}. 

\begin{proof}[{\bf Proof of Theorem~\ref{thm2-2} }]
Since  $C_0^\infty(\Omega)$ is density in $L^2(\Omega)$, by Lemma~\ref{lem2-2y} we only need to show \eqref{eq2-2} in the case $f\in C_0^\infty(\Omega)$. 

Let $f\in C_0^\infty(\Omega)$ and define 
\[
I[f](x)=\int_{\Omega} \big(G_0(y,x)+\lambda_0 \rho(y)\rho(x)\big) f(y)\; dy, \;\;\; x\in \Omega.
\]
Our goal is to show that $\mathcal{G}f=I[f]$  in $\Omega$. Let $u_0(x)=\int_{\Omega} G_0(y,x) f(y)\; dy$. Then $u_0\in C^2(\bar{\Omega})\cap H_0^1(\Omega)$  satisfies 
\begin{equation}\label{eq2-5}
-\Delta u_0=f\quad \text{in}\; \Omega.
\end{equation}
By \eqref{eq2-3} and \eqref{eq2-5}, we find that 
\[
\int_\Omega f \rho \;dx=\int_{\Omega} (-\Delta u_0)\rho+(\Delta \rho) u_0\;dx=\int_{\partial \Omega} -\frac{\partial u_0}{\partial \mathbf{n}}\rho+\frac{\partial \rho}{\partial \mathbf{n}}  u_0\;dS_x
=-\int_{\partial \Omega} \frac{\partial u_0}{\partial \mathbf{n}}\;dS_x.
\]
As a result, we have 
\[
\int_{\partial O_0}\frac{\partial I[f]}{\partial \mathbf{n}}\;dS_x=\int_{\partial O_0}\frac{\partial u_0}{\partial \mathbf{n}}\;dS_x+\lambda_0\big(\int_\Omega f \rho\; dx\big)\int_{\partial O_0} \frac{\partial \rho}{\partial \mathbf{n}}\;dS_x=0.
\]
Then, it is easy to see that  $I[f]$ is a solution of \eqref{eq2-1}.  By Lemma~\ref{lem2-1y}, $I[f]$ is the unique solution of \eqref{eq2-1}. Thus, $\mathcal{G}f=I[f]$ in $\Omega$.

By the maximum principle, we have  $0<\rho<1$ in $\Omega$.
Then the Hopf lemma implies that $\frac{\partial \rho}{\partial \mathbf{n}}$ is positive on $\partial O_0$. So, 
$
\lambda_0=\Big(\int_{\partial O_0}\; \frac{\partial \rho}{\partial \mathbf{n}}\; dS_x\Big)^{-1}>0.
$
We complete the proof. 
\end{proof}

We  begin to study the asymptotic behaviors of $G(y,x)$ at infinity via some suitable Green representation formulas relying on $G_S(y,x)$, where $G_S(y,x)$ is given in \eqref{eq1-18}. 
To this purpose, we first introduce  the asymptotic behaviors of $G_S(y,x)$ at infinity. 
\begin{lemma}\label{lem2-4} For $x,y\in S$ with $|y_1|<L_0$ and large $|x|$, where $L_0>0$ is a  constant, 
\begin{equation}\label{eq2-8}
	G_S(y,x)=\frac{\sin x_2\sin y_2}{\pi e^{|x_1-y_1|}}+O\left(\frac{\sin x_2}{e^{2|x_1|}}\right).
	\end{equation}
\end{lemma}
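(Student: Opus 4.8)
The plan is to obtain the estimate by a direct asymptotic expansion of the closed form \eqref{eq1-18}, using the fact that $|y_1|<L_0$ forces $|t|:=|x_1-y_1|\to\infty$ whenever $|x|\to\infty$. Writing $P=\cosh t-\cos(y_2+x_2)$ and $Q=\cosh t-\cos(y_2-x_2)$, formula \eqref{eq1-18} reads $G_S(y,x)=\frac{1}{4\pi}\ln(P/Q)$. The first and crucial step is the product-to-sum identity $P-Q=\cos(y_2-x_2)-\cos(y_2+x_2)=2\sin x_2\sin y_2$; this is exactly what produces the factor $\sin x_2\sin y_2$ in the leading term and, more importantly, the factor $\sin x_2$ in the remainder.

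Accordingly, I would write $\ln(P/Q)=\ln\bigl(1+(P-Q)/Q\bigr)=\ln\bigl(1+2\sin x_2\sin y_2/Q\bigr)$ rather than expanding $\ln P-\ln Q$ term by term. Since $Q=\cosh t-\cos(y_2-x_2)\ge\cosh t-1\ge\tfrac12\cosh t$ for $|t|$ large, uniformly in $x_2,y_2\in(0,\pi)$, the quantity $s:=2\sin x_2\sin y_2/Q$ satisfies $|s|\le Ce^{-|t|}$ uniformly, so the elementary bound $|\ln(1+s)-s|\le Cs^2$ applies and contributes a remainder of size $O(\sin^2 x_2\,e^{-2|t|})=O(\sin x_2\,e^{-2|t|})$. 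For the linear term I would expand $1/Q=1/\cosh t+O(e^{-2|t|})$ and $1/\cosh t=2e^{-|t|}+O(e^{-3|t|})$, which gives $s=4\sin x_2\sin y_2\,e^{-|t|}+O(\sin x_2\,e^{-2|t|})$; every error inherits the factor $\sin x_2$ because $s$ carries it as an overall multiplier. Combining these, $\frac{1}{4\pi}\ln(P/Q)=\frac{\sin x_2\sin y_2}{\pi}e^{-|t|}+O(\sin x_2\,e^{-2|t|})$.

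Finally I would convert $|t|=|x_1-y_1|$ to $|x_1|$ in the remainder: since $|y_1|<L_0$, one has $e^{-2|x_1-y_1|}\le e^{2L_0}e^{-2|x_1|}$, so the error is $O(\sin x_2\,e^{-2|x_1|})$, yielding \eqref{eq2-8}; the leading exponential $e^{-|x_1-y_1|}=1/e^{|x_1-y_1|}$ is kept unchanged.

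The main obstacle, and really the only subtle point, is to guarantee that the remainder genuinely carries the factor $\sin x_2$, equivalently that it vanishes on the boundary $\{x_2=0\}\cup\{x_2=\pi\}$ as it must since $G_S$ does. This dictates the bookkeeping above: one must factor $P-Q$ out \emph{before} taking the logarithm and must expand $1/Q$ to one extra order, so that the overall $\sin x_2$ prefactor is never absorbed into a crude $O(e^{-2|t|})$. The uniformity of all $O$-constants in $x_2,y_2\in(0,\pi)$ then follows from the single lower bound $Q\ge\tfrac12\cosh t$.
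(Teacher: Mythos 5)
Your argument is correct and is essentially the paper's own proof: the paper likewise rewrites $G_S(y,x)=\frac{1}{4\pi}\ln\bigl(1+\frac{2\sin x_2\sin y_2}{\cosh(y_1-x_1)-\cos(y_2-x_2)}\bigr)$ via the identity $\cos(y_2-x_2)-\cos(y_2+x_2)=2\sin x_2\sin y_2$ and then invokes the Taylor expansion. You have simply carried out in detail the expansion the paper leaves implicit, including the correct bookkeeping of the $\sin x_2$ factor in the remainder.
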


\begin{proof} Since 
\[
G_S(y,x)=\frac{1}{4\pi}\ln \left(1+\frac{2 \sin x_2\sin y_2}{\cosh(y_1-x_1)-\cos (y_2-x_2)}\right), 
\]
the estimate \eqref{eq2-8}  follows from the Taylor expansion.  
\end{proof}


We first estimate $\rho$ at infinity, where $\rho$ is defined in \eqref{eq2-3}. 

\begin{lemma}\label{lem2-5} For $x\in \Omega$ with large $|x|$, 
\begin{equation}\label{eq2-12}
\begin{split}
\rho(x)=\begin{cases}\rho_+\frac{\sin x_2}{e^{|x_1|}}+O\left(\frac{\sin x_2}{e^{2|x_1|}}\right),\quad &\text{if}\;\; x_1>0,\\
\rho_-\frac{\sin x_2}{e^{|x_1|}}+O\left(\frac{\sin x_2}{e^{2|x_1|}}\right),\quad &\text{if}\;\; x_1<0,
\end{cases}
\end{split}
\end{equation}
where $\rho_+=\frac{1}{\pi }\int_{\partial O_0} e^{y_1}\sin y_2 \frac{\partial \rho (y)}{\partial \mathbf{n}}\;dS_y$,  $\rho_-=\frac{1}{\pi }\int_{\partial O_0} e^{-y_1}\sin y_2 \frac{\partial \rho (y)}{\partial \mathbf{n}}\;dS_y$ are two positive constants. 

\end{lemma}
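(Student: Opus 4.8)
The plan is to represent $\rho$ by the boundary integral formula \eqref{eq1-18e} and then substitute the far-field expansion of $G_S$ from Lemma~\ref{lem2-4}. Since $\rho$ is bounded and harmonic in $\Omega$ with $\rho=0$ on $\partial S$, formula \eqref{eq1-18e} applies with $f=\rho$, giving
\begin{equation*}
\rho(x)=\int_{\partial O_0} G_S(y,x)\frac{\partial \rho(y)}{\partial \mathbf{n}}\;dS_y-\int_{\partial O_0}\rho(y)\frac{\partial G_S(y,x)}{\partial \mathbf{n}}\;dS_y,\qquad x\in\Omega.
\end{equation*}
Because $\rho\equiv 1$ on $\partial O_0$, and $G_S(\cdot,x)$ is harmonic in a neighborhood of $\bar O_0$ whenever $x\in\Omega$ (so that $x\notin\bar O_0$ and the only singularity $y=x$ stays away from $O_0$), the divergence theorem together with the fact that $\mathbf{n}$ is the outward normal of $\Omega$, hence the inward normal of $O_0$, yields $\int_{\partial O_0}\frac{\partial G_S(y,x)}{\partial \mathbf{n}}\,dS_y=0$. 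The second integral therefore drops out and
\begin{equation*}
\rho(x)=\int_{\partial O_0} G_S(y,x)\frac{\partial \rho(y)}{\partial \mathbf{n}}\;dS_y.
\end{equation*}

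Next I would insert the expansion of Lemma~\ref{lem2-4}. As $\partial O_0$ is compact we may fix $L_0$ with $|y_1|<L_0$ for all $y\in\partial O_0$, so the expansion holds uniformly in $y\in\partial O_0$ once $|x|$ is large. For $x_1>0$ large one has $|x_1-y_1|=x_1-y_1$, whence $e^{-|x_1-y_1|}=e^{y_1}e^{-x_1}=e^{y_1}e^{-|x_1|}$; substituting and separating the leading term from the remainder gives
\begin{equation*}
\rho(x)=\frac{\sin x_2}{\pi e^{|x_1|}}\int_{\partial O_0} e^{y_1}\sin y_2\,\frac{\partial \rho(y)}{\partial \mathbf{n}}\;dS_y+O\!\left(\frac{\sin x_2}{e^{2|x_1|}}\right),
\end{equation*}
where the error is controlled because $\int_{\partial O_0}\big|\tfrac{\partial \rho}{\partial \mathbf{n}}\big|\,dS_y<\infty$ ($\rho\in C^2(\bar\Omega)$ and $\partial O_0$ is compact), so the remainder in Lemma~\ref{lem2-4} integrates to an error of the same order. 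This is exactly the claimed formula with the stated $\rho_+$. The case $x_1<0$ is symmetric: there $|x_1-y_1|=y_1-x_1$ gives $e^{-|x_1-y_1|}=e^{-y_1}e^{-|x_1|}$, producing the constant $\rho_-$.

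Finally I would verify positivity of the two constants. By the maximum principle $0<\rho<1$ in $\Omega$ with $\rho=1$ on $\partial O_0$, so the Hopf lemma gives $\frac{\partial \rho}{\partial \mathbf{n}}>0$ on $\partial O_0$ (the same fact used in the proof of Theorem~\ref{thm2-2}). Since $e^{\pm y_1}\sin y_2>0$ for $y\in\partial O_0\subset S$, both integrands defining $\rho_+$ and $\rho_-$ are strictly positive, hence $\rho_+,\rho_->0$. I expect the only delicate points to be the sign and orientation bookkeeping for $\mathbf{n}$ that makes the $\frac{\partial G_S}{\partial \mathbf{n}}$ term vanish, and the uniformity of the Lemma~\ref{lem2-4} remainder over the compact set $\partial O_0$; both are routine once the representation formula is set up.
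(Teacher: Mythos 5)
Your proposal is correct and follows essentially the same route as the paper: the Green representation formula of Lemma~\ref{lemC-1} with $f=\rho$, elimination of the $\frac{\partial G_S}{\partial \mathbf{n}}$ term via harmonicity of $G_S(\cdot,x)$ in $O_0$ and the divergence theorem, substitution of the expansion \eqref{eq2-8}, and positivity of $\rho_\pm$ from the Hopf lemma. No gaps.
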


\begin{proof}
 By \eqref{eqC-1} in Lemma~\ref{lemC-1}, we have that for $x\in \Omega$,
\[
\rho(x)=\int_{\partial O_0}\,G_S(y,x)\frac{\partial \rho (y)}{\partial \mathbf{n}}-\rho(y)\frac{\partial G_S(y,x)}{\partial \mathbf{n}}\, dS_y=\int_{\partial O_0}\,G_S(y,x)\frac{\partial \rho (y)}{\partial \mathbf{n}}\, dS_y,
\]
where we have used 
\[
\int_{\partial O_0}\,\rho(y)\frac{\partial G_S(y,x)}{\partial \mathbf{n}}\, dS_y=\int_{\partial O_0}\,\frac{\partial G_S(y,x)}{\partial \mathbf{n}}\, dS_y=-\int_{O_0} \Delta_y G_S(y,x)\;dy=0.
\]
Then \eqref{eq2-12} follows from \eqref{eq2-8}.  Since  $\frac{\partial \rho}{\partial \mathbf{n}}>0$,  $e^{y_1}\sin y_2>0$ and $ e^{-y_1}\sin y_2>0$ on $\partial O_0$,  we have  
$\rho_+,\rho_->0$. 
\end{proof}

We now show the Green function $G$ at infinity. 

\begin{lemma}\label{lem2-6}For any $x,z\in \Omega$ with large $|z|$,
\begin{equation}\label{eq2-14}
\begin{split}
G(z,x)-G_S(z,x)=\begin{cases}
c_+(x)\frac{\sin z_2}{e^{|z_1|}}+O\left(\frac{\sin z_2}{e^{2|z_1|}}\right),\;\;&\text{if}\;\; z_1>0,
\\
c_-(x)\frac{\sin z_2}{e^{|z_1|}}+O\left(\frac{\sin z_2}{e^{2|z_1|}}\right),\;\;&\text{if}\;\; z_1<0,
\end{cases}
\end{split}
\end{equation}
where $c_+(x)=\frac{1}{\pi}\int_{\partial O_0} e^{y_1}\sin y_2  \frac{\partial G(y,x)}{\partial \mathbf{n}}dS_y$ and  $c_-(x)=\frac{1}{\pi}\int_{\partial O_0} e^{-y_1}\sin y_2  \frac{\partial G(y,x)}{\partial \mathbf{n}}dS_y$, satisfying 
\begin{equation}\label{eq2-15}
|c_+(x)|,\;|c_-(x)|\leq C\rho(x). 
\end{equation}

\end{lemma}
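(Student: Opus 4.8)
The plan is to fix $x\in\Omega$ and analyse the function $h(z):=G(z,x)-G_S(z,x)$ of $z$, applying the representation formula \eqref{eq1-18e} exactly as in the proof of Lemma~\ref{lem2-5}; the one genuinely new feature is that $h$ is no longer constant on $\partial O_0$. First I would check that $h$ meets the hypotheses of \eqref{eq1-18e}. By \eqref{eq2-4} one has $G(\cdot,x)=G_0(\cdot,x)+\lambda_0\rho(x)\rho(\cdot)$, so the logarithmic singularities of $G(\cdot,x)$ and $G_S(\cdot,x)$ at $z=x$ coincide and cancel in $h$, while the remainder $\lambda_0\rho(x)\rho(z)$ is harmonic in $z$; hence $h$ is harmonic throughout $\Omega$. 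It is bounded, vanishes on $\partial S$ (both Green functions do), and tends to $0$ as $|z|\to\infty$. Evaluating \eqref{eq1-18e} at $z$ therefore gives
\[
h(z)=\int_{\partial O_0}\Big(G_S(y,z)\tfrac{\partial h(y)}{\partial\mathbf n}-h(y)\tfrac{\partial G_S(y,z)}{\partial\mathbf n}\Big)\,dS_y .
\]

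Next I would simplify the right-hand side. On $\partial O_0$ the expansion \eqref{eq2-4} together with $G_0=0$, $\rho=1$ gives $G(y,x)=\lambda_0\rho(x)$, a constant in $y$, so $h(y)=\lambda_0\rho(x)-G_S(y,x)$ and $\partial_{\mathbf n}h=\partial_{\mathbf n}G(\cdot,x)-\partial_{\mathbf n}G_S(\cdot,x)$. Substituting, three of the four resulting integrals are disposed of: the term $\lambda_0\rho(x)\int_{\partial O_0}\partial_{\mathbf n}G_S(y,z)\,dS_y$ vanishes because $G_S(\cdot,z)$ is harmonic in $O_0$ (as $z\in\Omega$), and the two cross terms carrying $G_S(\cdot,x)$ and $G_S(\cdot,z)$ cancel against each other by Green's second identity on $O_0$, where both functions are harmonic. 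This cancellation is the crux of the asymptotic part, and it leaves the exact identity
\[
G(z,x)-G_S(z,x)=\int_{\partial O_0}G_S(y,z)\,\frac{\partial G(y,x)}{\partial\mathbf n}\,dS_y,\qquad z\in\Omega .
\]
Inserting \eqref{eq2-8} of Lemma~\ref{lem2-4}, which holds uniformly for $y\in\partial O_0$ because $\partial O_0$ is compact, and using $e^{-|z_1-y_1|}=e^{y_1}e^{-|z_1|}$ for $z_1>0$ and $e^{-|z_1-y_1|}=e^{-y_1}e^{-|z_1|}$ for $z_1<0$ (with $|z_1|$ large), then produces \eqref{eq2-14} with the stated $c_\pm(x)$.

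The remaining task, which I expect to be the main obstacle, is the bound \eqref{eq2-15}. On $\partial O_0$ I would split $\partial_{\mathbf n}G(y,x)=\partial_{\mathbf n}G_0(y,x)+\lambda_0\rho(x)\,\partial_{\mathbf n}\rho(y)$ via \eqref{eq2-4}. The $\rho$-piece contributes precisely $\lambda_0\rho_\pm\,\rho(x)$ to $c_\pm(x)$ by the definition of $\rho_\pm$ in Lemma~\ref{lem2-5}, and is thus $\le C\rho(x)$. For the $G_0$-piece the point is that $\partial_{\mathbf n}G_0(\cdot,x)$ has a fixed sign on $\partial O_0$: since $G_0(\cdot,x)>0$ in $\Omega$ and vanishes on $\partial O_0$, Hopf's lemma gives $\partial_{\mathbf n}G_0(\cdot,x)\le0$ there, $\mathbf n$ being the outer normal of $\Omega$. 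Applying Green's second identity to $\rho$ and $G_0(\cdot,x)$ on $\Omega\setminus B_\varepsilon(x)$ and letting $\varepsilon\to0$ --- using $\rho=1$, $G_0=0$ on $\partial O_0$, $\rho=0$ on $\partial S$, the decay at infinity, and the normalization of the logarithmic singularity --- yields $\int_{\partial O_0}\partial_{\mathbf n}G_0(y,x)\,dS_y=-\rho(x)$, hence $\int_{\partial O_0}|\partial_{\mathbf n}G_0(y,x)|\,dS_y=\rho(x)$. Since $e^{\pm y_1}\sin y_2$ is positive and bounded on the compact set $\partial O_0$, the $G_0$-piece of $c_\pm(x)$ is bounded by $\tfrac1\pi\big(\max_{\partial O_0}e^{\pm y_1}\sin y_2\big)\rho(x)$, and adding the two contributions gives $|c_\pm(x)|\le C\rho(x)$. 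The only points requiring care are the orientation conventions in the two uses of Green's identity (on $O_0$ and on $\Omega\setminus B_\varepsilon(x)$) and the uniformity of the remainder in \eqref{eq2-8} over $\partial O_0$.
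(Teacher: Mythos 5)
Your proposal is correct and follows essentially the same route as the paper: both reduce to the exact identity $G(z,x)-G_S(z,x)=\int_{\partial O_0}G_S(y,z)\,\partial_{\mathbf n}G(y,x)\,dS_y$ by killing the constant boundary term and the two cross terms via harmonicity of $G_S(\cdot,x)$, $G_S(\cdot,z)$ in $O_0$, then insert the expansion \eqref{eq2-8} and bound $c_\pm$ by splitting $\partial_{\mathbf n}G=\partial_{\mathbf n}G_0+\lambda_0\rho(x)\partial_{\mathbf n}\rho$ together with $\int_{\partial O_0}|\partial_{\mathbf n}G_0(y,x)|\,dS_y=\rho(x)$. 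The only cosmetic difference is that you rederive this last identity by Green's formula on $\Omega\setminus B_\varepsilon(x)$, whereas the paper quotes it from the representation formula \eqref{eqC-2} applied to $\rho$.
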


\begin{proof} Fix $x\in \Omega$. Set $f(y)=G(y,x)-G_S(y,x)$. It satisfies the conditions  in Lemma~\ref{lemC-1}. So by \eqref{eq2-4}, for $z\in \Omega$, 
\[
\begin{split}
f(z)=&\int_{\partial O_0}\,G_S(y,z)\frac{\partial f(y)}{\partial \mathbf{n}}-f(y)\frac{\partial G_S(y,z)}{\partial \mathbf{n}}\, dS_y
\\
=&\int_{\partial O_0}\,G_S(y,z)\frac{\partial \big(G(y,x)-G_S(y,x)\big)}{\partial \mathbf{n}}-\big(G(y,x)-G_S(y,x)\big)\frac{\partial G_S(y,z)}{\partial \mathbf{n}}\, dS_y
\\
=&\int_{\partial O_0}\,G_S(y,z)\frac{\partial G(y,x)}{\partial \mathbf{n}} \, dS_y -\rho(x) \int_{\partial O_0}\, \frac{\partial G_S(y,z)}{\partial \mathbf{n}}\, dS_y 
\\
&+\int_{\partial O_0}\, G_S(y,x)\frac{\partial G_S(y,z)}{\partial \mathbf{n}}-G_S(y,z)\frac{\partial G_S(y,x)}{\partial \mathbf{n}} \, dS_y.
\end{split}
\]
Since $x,z\notin O_0$, $G_S(\cdot,x)$ and $G_S(\cdot,z)$ are harmonic in $O_0$. So we get 
\[
\begin{split}
\int_{\partial O_0}\, \frac{\partial G_S(y,z)}{\partial \mathbf{n}}\, dS_y=-\int_{O_0}\; \Delta_y G_S(y,z) dy=0,
\end{split}
\]
and 
\[
\begin{split}
\int_{\partial O_0}\, G_S(y,x)&\frac{\partial G_S(y,z)}{\partial \mathbf{n}}-G_S(y,z)\frac{\partial G_S(y,x)}{\partial \mathbf{n}} \, dS_y
\\
=&-\int_{O_0}\; G_S(y,x) \Delta_y G_S(y,z)-G_S(y,z) \Delta_y G_S(y,x)\; dy=0.
\end{split}
\]
Thus, 
\begin{equation}\label{eq2-17}
f(z)=\int_{\partial O_0}\,G_S(y,z)\frac{\partial G(y,x)}{\partial \mathbf{n}} \, dS_y,\quad z\in \Omega. 
\end{equation}

By Lemma~\ref{lemC-1}, we have 
$
\rho(x)=-\int_{\partial O_0}\; \frac{\partial G_0(y,x)}{\partial \mathbf{n}} \, dS_y
$
for $x\in \Omega$. We also have $\frac{\partial G_0(y,x)}{\partial \mathbf{n}} <0$ on $\partial O_0$ via the Hopf lemma.  So  for $x\in \Omega$, 
\[
\rho(x)=\int_{\partial O_0}\,\Big|\frac{\partial G_0(y,x)}{\partial \mathbf{n}} \Big|\, dS_y,\;\;x\in \Omega.
\]
Since $0<\rho(x)\leq 1$ for $x\in \bar{\Omega}$, we obtain from \eqref{eq2-4}  that 
\[
\begin{split}
\int_{\partial O_0}\,\Big|\frac{\partial G(y,x)}{\partial \mathbf{n}} \Big|\, dS_y=\int_{\partial O_0}\,\Big|\frac{\partial \big(G_0(y,x) +\lambda_0\rho(x)\rho(y)      \big)}{\partial \mathbf{n}} \Big|\, dS_y
\\
\leq \int_{\partial O_0}\,\Big|\frac{\partial G_0(y,x)}{\partial \mathbf{n}} \Big|\, dS_y+\rho(x)\leq 2\rho(x)\leq 2.
\end{split}
\]
Hence, \eqref{eq2-14} follows from \eqref{eq2-8} amd \eqref{eq2-17}.

Next we show \eqref{eq2-15}.  By \eqref{eq2-4}, we have 
\[
\begin{split}
|c_+(x)|=&\Big|\frac{1}{\pi}\int_{\partial O_0} e^{y_1}\sin y_2  \frac{\partial G(y,x)}{\partial \mathbf{n}}dS_y\Big|
\\
=&\Big|
\frac{1}{\pi}\int_{\partial O_0} e^{y_1}\sin y_2  \frac{\partial G_0(y,x)}{\partial \mathbf{n}}dS_y+\lambda_0 \rho_+ \rho(x)
\Big|
\\
\leq& C\int_{\partial O_0}\,\Big|\frac{\partial G_0(y,x)}{\partial \mathbf{n}} \Big|\, dS_y+\lambda_0 \rho_+ \rho(x)
\\
\leq& C\rho(x),
\end{split}
\]
where $\rho_+$ is given in \eqref{eq2-12}.
Similarly, we can prove $|c_-(x)|\leq C\rho(x)$. 
\end{proof}

Based on Lemma~\ref{lem2-6}, we are able to prove the asymptotic behaviors of $H(x,x)$ at infinity, where $H(y,x)$ is the regular part of $G(y,x)$.

\begin{proposition}\label{pro2-7}For $x\in \Omega$ with large $|x|$, 
\begin{equation}\label{eq2-19}
H(x,x)=-\frac{1}{2\pi}\ln (2\sin x_2)+O\left(\frac{\sin x_2}{e^{2|x_1|}}\right).
\end{equation}
\end{proposition}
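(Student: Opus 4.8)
The plan is to compare the regular part $H$ of the circulation Green function $G$ with the regular part of the strip Green function $G_S$, for which an exact diagonal value can be read off from the explicit formula \eqref{eq1-18}, and then to absorb the difference into the claimed remainder using the decay estimates already in hand. Write $G_S(y,x)=\frac{1}{2\pi}\ln\frac{1}{|y-x|}-H_S(y,x)$, where $H_S$ is the regular part of $G_S$, and recall $G(y,x)=\frac{1}{2\pi}\ln\frac{1}{|y-x|}-H(y,x)$. Subtracting the two identities gives $H(y,x)=H_S(y,x)-\big(G(y,x)-G_S(y,x)\big)$. Since the two logarithmic singularities cancel, $G-G_S$ is continuous up to the diagonal, so I may let $y\to x$ to obtain $H(x,x)=H_S(x,x)-(G-G_S)(x,x)$, reducing the proposition to one exact computation and one remainder estimate.

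First I would evaluate $H_S(x,x)$ directly from \eqref{eq1-18}. Near the diagonal $\cosh(y_1-x_1)-\cos(y_2-x_2)=\tfrac12|y-x|^2\big(1+O(|y-x|^2)\big)$, so the first (singular) term of \eqref{eq1-18} contributes $-\frac{1}{2\pi}\ln|y-x|$ together with the regular limit $\frac{1}{4\pi}\ln 2$ as $y\to x$, while the second term of \eqref{eq1-18} is smooth at $y=x$ and equals $\frac{1}{4\pi}\ln\big(\cosh 0-\cos 2x_2\big)=\frac{1}{4\pi}\ln(2\sin^2 x_2)$. Collecting these and using $\ln(2\sin^2 x_2)=\ln 2+2\ln\sin x_2$ yields the closed form $H_S(x,x)=-\frac{1}{2\pi}\ln(2\sin x_2)$, which is precisely the leading term of \eqref{eq2-19}. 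This computation is exact, not merely asymptotic.

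It then remains to show $(G-G_S)(x,x)=O\!\big(\sin x_2/e^{2|x_1|}\big)$, and here I would apply Lemma~\ref{lem2-6} on the diagonal. The expansion \eqref{eq2-14} holds for any $x\in\Omega$ and all large $|z|$, with an implicit constant that can be taken independent of $x$ because in \eqref{eq2-17} the remainder from \eqref{eq2-8} is integrated against $\int_{\partial O_0}|\partial G(\cdot,x)/\partial\mathbf n|\,dS_y\le 2\rho(x)\le 2$. Setting $z=x$ with $|x|$ large gives $(G-G_S)(x,x)=c_\pm(x)\frac{\sin x_2}{e^{|x_1|}}+O\!\big(\sin x_2/e^{2|x_1|}\big)$. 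The key point is that the nominally leading boundary-layer term degenerates: by \eqref{eq2-15} one has $|c_\pm(x)|\le C\rho(x)$, and by Lemma~\ref{lem2-5} $\rho(x)\le C\,\sin x_2/e^{|x_1|}$ for large $|x|$, so that
\[
\Big|c_\pm(x)\tfrac{\sin x_2}{e^{|x_1|}}\Big|\le C\tfrac{\sin^2 x_2}{e^{2|x_1|}}\le C\tfrac{\sin x_2}{e^{2|x_1|}},
\]
the last inequality because $\sin x_2\le 1$. Hence $(G-G_S)(x,x)=O\!\big(\sin x_2/e^{2|x_1|}\big)$, and combining with $H_S(x,x)=-\frac{1}{2\pi}\ln(2\sin x_2)$ gives \eqref{eq2-19}.

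The step I expect to require the most care is the legitimacy of evaluating Lemma~\ref{lem2-6} at $z=x$ while keeping the error uniform in $x$: one must check that the representation \eqref{eq2-17} and the expansion \eqref{eq2-14} genuinely hold with an $x$-independent implicit constant (this is exactly where the bound $\int_{\partial O_0}|\partial G(\cdot,x)/\partial\mathbf n|\le 2$ from the proof of Lemma~\ref{lem2-6} is used), and that the apparently dominant term $c_\pm(x)\sin x_2/e^{|x_1|}$ is truly swallowed by the remainder thanks to the additional decay of $\rho$ at infinity. The remaining ingredient, the diagonal Taylor expansion of \eqref{eq1-18}, is routine.
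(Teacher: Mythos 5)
Your proposal is correct and follows essentially the same route as the paper: write $H(x,x)=H_S(x,x)-(G-G_S)(x,x)$, apply Lemma~\ref{lem2-6} at $z=x$, and absorb the term $c_\pm(x)\sin x_2/e^{|x_1|}$ into the remainder using $|c_\pm(x)|\le C\rho(x)$ together with the decay of $\rho$ from Lemma~\ref{lem2-5}. The only difference is that you verify the exact identity $H_S(x,x)=-\frac{1}{2\pi}\ln(2\sin x_2)$ from the explicit formula \eqref{eq1-18}, which the paper simply quotes as \eqref{eq2-7}; your computation of it is correct.
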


\begin{proof}
Let $H_S(y,x)$ be the regular part of $G_S(y,x)$ such that $G_S(y,x)=\frac{1}{2\pi}\ln \frac{1}{|y-x|}-H_S(y,x)$. Then, $G(z,x)-G_S(z,x)=H_S(z,x)-H(z,x)$. 
By \eqref{eq2-14}-\eqref{eq2-15} and taking $z=x$ in \eqref{eq2-14}, we get that for  $x\in \Omega$ with large $|x|$,
\[
H(x,x)=H_S(x,x)+O\left( \rho(x)\frac{\sin x_2}{e^{|x_1|}}+\frac{\sin x_2}{e^{2|x_1|}}  \right).
\]
Then \eqref{eq2-19} follows from \eqref{eq2-12} and 
	\begin{equation}\label{eq2-7}
	H_S(x,x)=-\frac{1}{2\pi}\ln (2\sin x_2),\;\;\; x\in S. 
	\end{equation}
\end{proof}

We also have the following boundary estimate for $H(x,x)$. 

\begin{proposition}\label{pro2-8} As $d:=\text{dist}(x,\partial \Omega)\to 0$,
\begin{equation}\label{eq2-21}
H(x,x)=\frac{1}{2\pi}\ln \frac{1}{2d}+O(1).
\end{equation}
\end{proposition}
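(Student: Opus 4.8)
The plan is to reduce the estimate for the Robin function $H(x,x)$ of the circulation Green function to the corresponding estimate for the ordinary Dirichlet Green function $G_0$ on $\Omega$, and then to establish the near-boundary asymptotics of the latter by a reflection argument combined with the maximum principle. First I would use the decomposition \eqref{eq2-4}: writing $H_0(y,x)$ for the regular part of $G_0$, so that $G_0(y,x)=\frac{1}{2\pi}\ln\frac{1}{|y-x|}-H_0(y,x)$, formula \eqref{eq2-4} gives $H(y,x)=H_0(y,x)-\lambda_0\rho(x)\rho(y)$, whence $H(x,x)=H_0(x,x)-\lambda_0\rho(x)^2$. Since $0<\rho\le 1$ in $\Omega$ by the maximum principle and $\lambda_0>0$ is a fixed constant, the term $\lambda_0\rho(x)^2$ is $O(1)$, so it suffices to prove $H_0(x,x)=\frac{1}{2\pi}\ln\frac{1}{2d}+O(1)$ as $d\to 0$.

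For the reflection argument, given $x\in\Omega$ with $d=\mathrm{dist}(x,\partial\Omega)$ small, let $p\in\partial\Omega$ be the (unique, for small $d$) nearest boundary point and let $x^\ast=2p-x$ be the reflection of $x$ across the tangent line to $\partial\Omega$ at $p$, so that $|x-x^\ast|=2d$. Because $\partial\Omega$ is $C^3$, for $d$ small the nearest-point projection is well defined and $x^\ast$ lies outside $\bar\Omega$: if $x$ approaches $\partial S$ then $x^\ast$ leaves the strip $S$, while if $x$ approaches $\partial O_0$ then $x^\ast$ enters $O_0$. Consequently the function $w(y):=-H_0(y,x)-\frac{1}{2\pi}\ln|y-x^\ast|$ is harmonic in $\Omega$ (the singularity of $G_0$ at $y=x$ is absorbed into $H_0$, and $x^\ast\notin\bar\Omega$), and evaluating at $y=x$ gives $w(x)=\frac{1}{2\pi}\ln\frac{1}{2d}-H_0(x,x)$, which is precisely the quantity to be controlled.

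I would then bound $w$ by the maximum principle. On $\partial\Omega$ one has $G_0=0$, so there $w(y)=\frac{1}{2\pi}\ln\frac{|y-x|}{|y-x^\ast|}$, and as $|y|\to\infty$ the exponential decay of $G_0$ (inherited from $G_S$ via Lemma~\ref{lem2-4} together with $0<G_0\le G_S$) forces $w(y)\to 0$. The crux is the uniform geometric estimate $\big|\ln\frac{|y-x|}{|y-x^\ast|}\big|\le C$ for all $y\in\partial\Omega$, with $C$ independent of $d$: for $y$ away from $p$ the ratio is bounded above and below and tends to $1$ along the unbounded parts of $\partial S$, while for $y$ near $p$ one writes $\partial\Omega$ as a graph over its tangent line at $p$ and uses the bounded curvature to obtain $|y-x^\ast|^2/|y-x|^2=1+O(|y-p|)$; on a flat piece of $\partial S$ the reflection is exact and the ratio is identically $1$. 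Granting this, the maximum principle (applicable since $w$ is bounded, harmonic, and vanishes at infinity) yields $|w(x)|\le\sup_{\partial\Omega}|w|=O(1)$, that is $H_0(x,x)=\frac{1}{2\pi}\ln\frac{1}{2d}+O(1)$, and combining with the first step proves \eqref{eq2-21}.

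I expect the main obstacle to be this uniform boundary estimate for $\ln(|y-x|/|y-x^\ast|)$, which is where the $C^3$ regularity of $\partial O_0$ is genuinely used (bounded curvature near $p$, and positive reach guaranteeing $x^\ast\notin\bar\Omega$ for small $d$). The two unbounded components $\{x_2=0\}$ and $\{x_2=\pi\}$ must be handled separately from the obstacle, but there the reflection is either exact (flat boundary) or harmless, since $x$ and $x^\ast$ stay at a fixed positive distance from the far part of $\partial\Omega$ because $\mathrm{dist}(O_0,\partial S)>0$; as a consistency check, on the side $\{x_2\to 0\}$ the explicit identity \eqref{eq2-7} gives $H_S(x,x)=\frac{1}{2\pi}\ln\frac{1}{2\sin x_2}=\frac{1}{2\pi}\ln\frac{1}{2d}+O(1)$, matching the claimed leading order.
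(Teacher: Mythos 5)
Your proposal is correct, and its first step (reducing to $H_0(x,x)$ via $H(y,x)=H_0(y,x)-\lambda_0\rho(x)\rho(y)$ and $0<\rho\le 1$) is exactly the paper's. Where you diverge is in how the boundary asymptotics of $H_0(x,x)$ are obtained. The paper splits into two regimes: for $|x_1|>L$ it sets $R(y,x)=G_0(y,x)-G_S(y,x)=H_S(y,x)-H_0(y,x)$, notes that $R(\cdot,x)$ is harmonic with boundary data $-G_S(\cdot,x)$ on $\partial O_0$ and $0$ on $\partial S$, and uses the maximum principle together with the decay estimate \eqref{eq2-8} to get $|R(x,x)|\le\sin x_2$, so that the explicit formula $H_S(x,x)=-\frac{1}{2\pi}\ln(2\sin x_2)$ and $\sin x_2=d+O(d^3)$ give the claim; for $|x_1|\le L$ it simply cites the standard bounded-domain Robin function asymptotics from \cite[Lemma A.1]{HYY}. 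You instead run a single, self-contained reflection (image-point) argument: the harmonic function $w(y)=-H_0(y,x)-\frac{1}{2\pi}\ln|y-x^\ast|$ is controlled by the uniform bound on $\ln(|y-x|/|y-x^\ast|)$ over $\partial\Omega$, which you correctly identify as the crux and which does hold — on the flat components the reflection is exact or yields a ratio $1+O(x_2)$, on $\partial O_0$ the $C^3$ (indeed $C^{1,1}$) regularity gives $(y-p)\cdot\mathbf{n}=O(|y-p|^2)$ and hence a ratio $1+O(d)$, and the far components stay at a fixed distance. Your route buys a unified treatment of both boundary components and removes the external citation, essentially supplying the proof of the step the paper delegates to \cite{HYY}; the paper's route buys a sharper remainder ($O(d)$ rather than $O(1)$) in the far region for free from the explicit $H_S$, at the cost of the case split. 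One point worth making explicit if you write this up: the maximum principle on the unbounded domain requires the decay $w(y)\to 0$ as $|y|\to\infty$, which you do note follows from the decay of $G_0$ (via $0<G_0\le G_S$) and of the log ratio.
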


\begin{proof} By \eqref{eq2-4}, 
\[
G(y,x)-G_0(y,x)=\lambda_0\rho(x)\rho(y), \; \text{i.e.},\; H_0(y,x)-H(y,x)=\lambda_0\rho(x)\rho(y),
\]
where $G_0(y,x)$ is the Green function of $-\Delta$ in $\Omega$ with zero Dirichlet boundary condition, 
$H_0(y,x)$ is the regular part of $G_0(y,x)$.   Since  $0<\rho(x)\leq 1$ for $x\in \bar{\Omega}$, by letting $y=x$ in above equality we have 
\begin{equation}\label{eq2-18y}
H(x,x)=H_0(x,x)-\lambda_0\rho^2(x)=H_0(x,x)+O(1).
\end{equation}

 By \eqref{eq2-8}, there exists a large constant $L>0$  such that   for    $x\in \Omega$ with $|x_1|>L$,
 \[
 \|G_S(\cdot,x)\|_{L^\infty (\partial O_0)} \leq \sin x_2.
\]
Let $R(y,x)=G_0(y,x)-G_S(y,x)=H_S(y,x)-H_0(y,x)$. Then  $R(\cdot,x)$ is harmonic in $\Omega$, $R(y,x)=-G_S(y,x)$ for $y\in \partial O_0$, $R(y,x)=0$ for $y\in \partial  S$, $R(y,x)\to 0$ as $|y|\to \infty$. 
The maximum principle implies that 
\[
|R(y,x)|\leq  \|G_S(\cdot,x)\|_{L^\infty (\partial O_0)} \leq \sin x_2,\;\; \forall y\in \Omega. 
\]
Taking  $y=x$, we get 
\[
|R(x,x)|\leq \sin x_2. 
\]
Then we have 
\[
H_0(x,x)=H_S(x,x)-R(x,x)=\frac{1}{2\pi}\ln \frac{1}{2\sin x_2}+O(\sin x_2)
\]
Since  $\sin x_2=d+O(d^3)$, we have that  for $x\in \Omega$ with $|x_1|>L$, 
\begin{equation}\label{eq2-19y}
H_0(x,x)=\frac{1}{2\pi }\ln \frac{1}{2d}+O(d)\quad \text{as}\;\; d\to 0. 
\end{equation}

For $x\in \Omega$ with $|x_1|\leq L$, we can use a similar argument, for the asymptotic behavior of  Robin function in bounded domain, as in \cite[Lemma A.1]{HYY} to show that 
\begin{equation}\label{eq2-20y}
H_0(x,x)=\frac{1}{2\pi }\ln \frac{1}{2d}+O(d)\quad \text{as}\;\; d\to 0. 
\end{equation}

Hence, \eqref{eq2-21} follows from \eqref{eq2-18y}-\eqref{eq2-20y}. 
\end{proof}


\subsection{The background streamfunction $\eta$}

In this subsection we study  the existence, uniqueness and asymptotic behaviors of irrotational background flows governed by the  system \eqref{eq2-1} with respect to the stream function $\eta$. 
To solve this system, we first decompose $\eta$ as follows 
\[
\eta(x)=b(x_2+\beta(x)). 
\]
Then $\beta$ satisfies  the following equation
\begin{equation}\label{eq2-23}
\begin{cases}
\Delta \beta=0\quad \text{in}\; \Omega,\\
\beta=-x_2-\lambda/b \;\;  \text{on}\; \partial O_0,\quad \beta=0 \;\; \text{on}\;\; \partial S, 
\\
\nabla \beta \to (0,0)\;\; \text{as}\;\; |x|\to \infty, 
\\
\int_{\partial O_0} \frac{\partial \beta}{\partial \mathbf{n}} =-\Gamma/b, 
\end{cases}
\end{equation}
where we have used 
\[
\int_{\partial O_0} \frac{\partial x_2}{\partial \mathbf{n}} dS_x=-\int_{O_0} \Delta x_2 dx=0. 
\]

By Lemma~\ref{lemA-1}, we have 
\begin{theorem}\label{th2-9} Let $\Gamma$ and $b$ be two prescribed non-zero constants. Then \eqref{eq2-23} has a unique solution $(\beta,\lambda_{b,\Gamma})$ in $H^1(\Omega)\times \mathbb{R}$.  Moreover, $\beta\in C^2(\bar{\Omega})$ satisfies that as $|x|\to \infty$,   $\beta\to 0$.
\end{theorem}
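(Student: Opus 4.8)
The plan is to decouple \eqref{eq2-23} into a fixed harmonic function carrying the non-constant Dirichlet data $-x_2$ on $\partial O_0$ and the harmonic function $\rho$ from \eqref{eq2-3} carrying the unknown additive constant; the flux condition then pins down the Lagrange multiplier precisely because $\rho$ has nonzero flux. Concretely, I would first construct $\beta_0\in C^2(\bar\Omega)\cap H^1(\Omega)$ solving
\begin{equation*}
\Delta\beta_0=0\ \text{in}\ \Omega,\quad \beta_0=-x_2\ \text{on}\ \partial O_0,\quad \beta_0=0\ \text{on}\ \partial S,\quad \beta_0,|\nabla\beta_0|\to0\ \text{as}\ |x|\to\infty,
\end{equation*}
and then set $\beta=\beta_0-(\lambda/b)\rho$. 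Since $\rho=1$ on $\partial O_0$, $\rho=0$ on $\partial S$, and both $\beta_0$ and $\rho$ decay at infinity, this $\beta$ satisfies the first three lines of \eqref{eq2-23} for every value of $\lambda$, so only the flux equation must be arranged. This is precisely the situation covered by the abstract harmonic existence result, Lemma~\ref{lemA-1}, which I would invoke; below I indicate the content directly.

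For the existence of $\beta_0$, since $\mathrm{dist}(\partial O_0,\partial S)>0$ I can choose a smooth $\zeta\in C^\infty(\bar\Omega)$ with $\zeta=-x_2$ on $\partial O_0$, $\zeta\equiv0$ in a neighborhood of $\partial S$, and $\zeta$ compactly supported, so that $\zeta=0$ on $\partial S$ automatically. Writing $\beta_0=\zeta+v$, I solve for $v\in H_0^1(\Omega)$ by Lax--Milgram: the form $\int_\Omega\nabla v\cdot\nabla\phi\,dx$ is coercive on $H_0^1(\Omega)$ thanks to the Poincaré inequality available in the strip (the same one used in Lemma~\ref{lem2-2y}), and the right-hand side $\int_\Omega\nabla\zeta\cdot\nabla\phi\,dx$ is bounded. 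Interior regularity together with the $C^3$ boundary gives $\beta_0\in C^2(\bar\Omega)$. The decay I would obtain exactly as for $\rho$ in Lemma~\ref{lem2-5}: $\beta_0$ is bounded by the maximum principle and vanishes on $\partial S$, so the representation formula \eqref{eq1-18e} applies, and inserting the exponential decay of $G_S$ from \eqref{eq2-8} yields $\beta_0(x)=O\!\big(\sin x_2\,e^{-|x_1|}\big)$ together with the companion bound for $\nabla\beta_0$, hence $\beta_0,|\nabla\beta_0|\to0$.

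It remains to fix $\lambda$. Computing the flux of $\beta=\beta_0-(\lambda/b)\rho$ gives
\begin{equation*}
\int_{\partial O_0}\frac{\partial\beta}{\partial\mathbf{n}}\,dS_x=\int_{\partial O_0}\frac{\partial\beta_0}{\partial\mathbf{n}}\,dS_x-\frac{\lambda}{b}\int_{\partial O_0}\frac{\partial\rho}{\partial\mathbf{n}}\,dS_x,
\end{equation*}
and by Theorem~\ref{thm2-2} the coefficient $\int_{\partial O_0}\partial\rho/\partial\mathbf{n}\,dS_x=\lambda_0^{-1}$ is strictly positive, so demanding that this flux equal $-\Gamma/b$ determines $\lambda=\lambda_{b,\Gamma}$ uniquely. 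For uniqueness of the pair, I subtract two solutions: the difference $w$ is harmonic, equals a constant $d$ on $\partial O_0$, vanishes on $\partial S$, decays, and has zero flux; comparing $w$ with $d\rho$ (both harmonic, agreeing on $\partial\Omega$, decaying) the maximum principle on the unbounded domain forces $w=d\rho$, and the vanishing flux then gives $d\,\lambda_0^{-1}=0$, i.e. $d=0$, so the two multipliers and the two stream functions coincide.

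The main obstacle is the behavior at infinity, not the solvability: on the strip-with-hole one must establish not merely $\beta\to0$ but $\nabla\beta\to(0,0)$ so that $\beta$ genuinely solves \eqref{eq2-23}. This is where the strip geometry is essential—harmonic functions vanishing on the horizontal walls decay exponentially—and I would route the argument through the representation formula \eqref{eq1-18e} and the decay of $G_S$ in Lemma~\ref{lem2-4}, exactly as in the treatment of $\rho$ and of $G$, rather than through a generic energy estimate. Everything else (the variational solve, the one-line flux computation, and the maximum-principle uniqueness) is routine once this decay is secured.
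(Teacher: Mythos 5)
Your proposal is correct, but it takes a genuinely different route from the paper. The paper proves Theorem~\ref{th2-9} in one line by invoking Lemma~\ref{lemA-1} with $f=-x_2$ and $\Lambda=-\Gamma/b$; the real work there is a constrained variational argument, minimizing $J(u)=\tfrac12\int_\Omega|\nabla u|^2\,dx-\Lambda\bigl(\tfrac{1}{|\partial O_0|}\int_{\partial O_0}u\,dS_x\bigr)$ over the affine set $K$ of $H^1$ functions equal to $f$ plus an undetermined constant on $\partial O_0$, so that the flux condition and the multiplier $\lambda$ emerge simultaneously from the Euler--Lagrange equation, with regularity, decay and uniqueness then handled by reflection, Moser iteration, Schauder estimates and the Hopf lemma. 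You instead superpose a fixed Dirichlet solution $\beta_0$ (solved by Lax--Milgram after a lifting) with the already-constructed $\rho$, and reduce the flux condition to a scalar affine equation for $\lambda$ whose coefficient $\int_{\partial O_0}\partial\rho/\partial\mathbf{n}=\lambda_0^{-1}$ is nonzero by Theorem~\ref{thm2-2}. This is the decomposition $\beta=-\xi-\tfrac{\lambda_{b,\Gamma}}{b}\rho$ that the paper itself uses \emph{a posteriori} in the proof of Theorem~\ref{th3-1}$(ii)$, and it has the advantage of producing the explicit formula \eqref{eq2-24} for $\lambda_{b,\Gamma}$ essentially for free, whereas the paper needs the separate Green's-identity computation of Lemma~\ref{lem2-10}; the paper's Lemma~\ref{lemA-1} is in exchange more general (arbitrary boundary data and flux, with a global Schauder estimate) and is reused elsewhere, e.g.\ in Lemma~\ref{lem2-1y}. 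Two minor points of order in your write-up: the representation formula \eqref{eqC-1} assumes $\beta_0$ and $|\nabla\beta_0|$ are already bounded, so you must first extract boundedness and decay from the $H^1$ solution (reflection across $\partial S$ plus Moser iteration and interior gradient estimates, exactly as in Step~2 of Lemma~\ref{lemA-1}) before invoking it; and the maximum principle on the unbounded strip in your uniqueness step needs the decay of $w-d\rho$ at infinity, which you do have. Neither is a gap, only a matter of sequencing.
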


\begin{lemma}\label{lem2-10}The flux constant $\lambda_{b,\Gamma}$ has the following form
\begin{equation}\label{eq2-24}
\lambda_{b,\Gamma}=\frac{\Gamma-b\pi \int_{\mathbb{R}\times \{x_2=\pi\}} \big| \frac{\partial \rho}{\partial \mathbf{n}} \big|\; dS_x  }{ \int_{\partial O_0} \frac{\partial \rho}{\partial \mathbf{n}}\; dS_x}.
\end{equation}
\end{lemma}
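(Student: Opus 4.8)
The plan is to pin down $\lambda_{b,\Gamma}$ by testing the harmonic problem \eqref{eq2-23} for $\beta$ against the harmonic function $\rho$ of \eqref{eq2-3}, and then to convert the resulting boundary integral over $\partial O_0$ into an integral over the top wall $\{x_2=\pi\}$ by a second reciprocity identity. Throughout, $\mathbf{n}$ denotes the outward unit normal of $\Omega$ (pointing into $O_0$ along $\partial O_0$ and out of the strip along $\partial S$), which is the convention under which $\frac{\partial\rho}{\partial\mathbf{n}}>0$ on $\partial O_0$ as in the proof of Theorem~\ref{thm2-2}.

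First I would apply Green's second identity to the harmonic pair $\beta,\rho\in C^2(\bar\Omega)$ (from Theorem~\ref{th2-9} and \eqref{eq2-3}) on the truncated domain $\Omega_R=\Omega\cap\{|x_1|<R\}$ and let $R\to\infty$. On the vertical caps $\{x_1=\pm R\}\cap\Omega$, of length at most $\pi$, the integrand is bounded by $(|\rho|+|\nabla\rho|)(|\beta|+|\nabla\beta|)\to0$, so the cap terms drop out in the limit. Along $\partial S$ one has $\rho=\beta=0$, contributing nothing, so only the $\partial O_0$ part survives. Inserting $\rho\equiv1$ and $\beta=-x_2-\lambda_{b,\Gamma}/b$ on $\partial O_0$, together with $\int_{\partial O_0}\frac{\partial\beta}{\partial\mathbf{n}}\,dS_x=-\Gamma/b$ from \eqref{eq2-23}, yields
\[
-\frac{\Gamma}{b}+\int_{\partial O_0} x_2\,\frac{\partial\rho}{\partial\mathbf{n}}\,dS_x+\frac{\lambda_{b,\Gamma}}{b}\int_{\partial O_0}\frac{\partial\rho}{\partial\mathbf{n}}\,dS_x=0,
\]
so that the lemma reduces to identifying $\int_{\partial O_0} x_2\,\frac{\partial\rho}{\partial\mathbf{n}}\,dS_x$.

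For the second step I would use the same reciprocity on the harmonic pair $x_2$ and $\rho$ over $\Omega_R$ with $R\to\infty$. Although $x_2$ and $\nabla x_2=(0,1)$ are only bounded, each cap integrand still carries one decaying factor ($\rho$ or $\nabla\rho$), so the caps vanish again. Since $\rho=0$ on $\partial S$ and $\int_{\partial O_0}\frac{\partial x_2}{\partial\mathbf{n}}\,dS_x=0$ (the flux of the constant field $(0,1)$ through the closed curve $\partial O_0$), the identity collapses to $\int_{\partial\Omega} x_2\,\frac{\partial\rho}{\partial\mathbf{n}}\,dS_x=0$. Splitting $\partial\Omega=\partial O_0\cup\{x_2=0\}\cup\{x_2=\pi\}$ and using $x_2=0$ on the bottom wall and $x_2=\pi$ on the top wall gives
\[
\int_{\partial O_0} x_2\,\frac{\partial\rho}{\partial\mathbf{n}}\,dS_x=-\pi\int_{\mathbb{R}\times\{x_2=\pi\}}\frac{\partial\rho}{\partial\mathbf{n}}\,dS_x.
\]
On $\{x_2=\pi\}$ the outward normal is $(0,1)$ while $\rho>0$ inside $\Omega$ and $\rho=0$ on the wall, so Hopf's lemma forces $\frac{\partial\rho}{\partial\mathbf{n}}<0$ there; hence $\frac{\partial\rho}{\partial\mathbf{n}}=-\big|\frac{\partial\rho}{\partial\mathbf{n}}\big|$, and the right-hand side equals $\pi\int_{\mathbb{R}\times\{x_2=\pi\}}\big|\frac{\partial\rho}{\partial\mathbf{n}}\big|\,dS_x$. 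Substituting this into the displayed relation of the first step produces exactly \eqref{eq2-24}.

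I expect the only genuinely delicate point to be the vanishing of the boundary-at-infinity (cap) terms in both Green identities; this is precisely where the exponential decay of $\rho,\nabla\rho$ from Lemma~\ref{lem2-5} and the decay $\beta,\nabla\beta\to0$ from Theorem~\ref{th2-9} are needed to make the truncation-and-limit argument rigorous. The rest is bookkeeping with the prescribed boundary values plus one application of Hopf's lemma to fix the sign on the top wall.
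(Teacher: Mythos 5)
Your proposal is correct and follows essentially the same route as the paper: Green's second identity between $\rho$ and the stream-function data, evaluated on the boundary using $\rho=1$ on $\partial O_0$, $\rho=0$ on $\partial S$, and Hopf's lemma for the sign on the top wall. The only difference is that you apply the reciprocity identity twice (to the pairs $(\beta,\rho)$ and $(x_2,\rho)$) where the paper applies it once to $(\beta+x_2,\rho)$, and you spell out the vanishing of the caps at infinity, which the paper leaves implicit.
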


\begin{proof}By the Green's formula, we have 
\[
\begin{split}
0=&\int_\Omega \rho \Delta(\beta+x_2)-(\beta+x_2)\Delta \rho \;dx
\\
=&\int_{\partial \Omega}  \rho \frac{\partial ( \beta+x_2)}{\partial \mathbf{n}}- ( \beta+x_2) \frac{\partial \rho}{\partial \mathbf{n}}\; dS_x
\\
=&\int_{\partial O_0}  \frac{\partial ( \beta+x_2)}{\partial \mathbf{n}}\; dS_x
-\pi\int_{\mathbb{R}\times \{x_2=\pi\}}  \frac{\partial \rho}{\partial \mathbf{n}}\; dS_x+\frac{\lambda_{b,\Gamma}}{b}\int_{\partial O_0} \frac{\partial \rho}{\partial \mathbf{n}}\; dS_x
\\
=&-\frac{\Gamma}{b}
-\pi\int_{\mathbb{R}\times \{x_2=\pi\}}  \frac{\partial \rho}{\partial \mathbf{n}}\; dS_x+\frac{\lambda_{b,\Gamma}}{b}\int_{\partial O_0} \frac{\partial \rho}{\partial \mathbf{n}}\; dS_x.
\end{split}
\]
By the maximum principle and Hopf lemma, we can show that $\int_{\mathbb{R}\times \{x_2=\pi\}}  \frac{\partial \rho}{\partial \mathbf{n}}\; dS_x=-\int_{\mathbb{R}\times \{x_2=\pi\}} | \frac{\partial \rho}{\partial \mathbf{n}} |\; dS_x$.  So, we have proved \eqref{eq2-24}.
\end{proof}


The  asymptotic behaviors  of $\beta$ at infinity are stated as follows. 

\begin{lemma}\label{lem2-11} Let  $b$  and $\Gamma$  be two positive constants, such that $\Gamma/b \geq \pi \int_{\mathbb{R}\times \{x_2=\pi\}} | \frac{\partial \rho}{\partial \mathbf{n}} |\; dS_x$.  Then 
for $x\in \Omega$ and with large $|x|$,
\begin{equation}\label{eq2-25}
\beta(x)=\begin{cases}-\beta_+\frac{\sin x_2}{e^{|x_1|}}+O\left(\frac{\Gamma}{b}\frac{\sin x_2}{e^{2|x_1|}}\right),\quad &\text{if}\;\; x_1>0,\\
-\beta_-\frac{\sin x_2}{e^{|x_1|}}+O\left(\frac{\Gamma}{b}\frac{\sin x_2}{e^{2|x_1|}}\right),\quad &\text{if}\;\; x_1<0,
\end{cases}
\end{equation}
where 
$\beta_{+}, \beta_{-}$ are two positive constants:
\[
\beta_+=-\frac{1}{\pi} \int_{\partial O_0} e^{y_1}\sin y_2  \frac{\partial ( \beta(y)+y_2)}{\partial \mathbf{n}}\, dS_y,\;\;\; \beta_-=-\frac{1}{\pi} \int_{\partial O_0} e^{-y_1}\sin y_2  \frac{\partial ( \beta(y)+y_2)}{\partial \mathbf{n}}\, dS_y.
\]

Moreover,  there exists two positive constants $C^\prime$ and $C^{\prime\prime}$,  depending only on $\Omega$,   such that 
\begin{equation}\label{eq2-27}
C^\prime \Gamma/b \leq \beta_{+}, \beta_{-} \leq C^{\prime\prime} \Gamma/b.
\end{equation}
 \end{lemma}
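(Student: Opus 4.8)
The plan is to derive a boundary-integral representation for $\beta$ analogous to the one used for $\rho$ in Lemma~\ref{lem2-5}, and then to read off the leading term from the asymptotics \eqref{eq2-8} of $G_S$. Since $\beta$ is bounded, harmonic in $\Omega$, and vanishes on $\partial S$ (Theorem~\ref{th2-9}), the representation formula \eqref{eqC-1} of Lemma~\ref{lemC-1} applies and gives
\[
\beta(x)=\int_{\partial O_0} G_S(y,x)\frac{\partial\beta(y)}{\partial\mathbf{n}}-\beta(y)\frac{\partial G_S(y,x)}{\partial\mathbf{n}}\,dS_y.
\]
On $\partial O_0$ one has $\beta(y)=-y_2-\lambda_{b,\Gamma}/b$; since $G_S(\cdot,x)$ is harmonic in $O_0$, $\int_{\partial O_0}\frac{\partial G_S(y,x)}{\partial\mathbf{n}}\,dS_y=0$, so the constant term drops, while a second use of Green's identity in $O_0$ (applied to the pair $y_2$ and $G_S(\cdot,x)$, both harmonic there) turns $\int_{\partial O_0} y_2\,\frac{\partial G_S}{\partial\mathbf{n}}\,dS_y$ into $\int_{\partial O_0} G_S\,\frac{\partial y_2}{\partial\mathbf{n}}\,dS_y$. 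Collecting terms yields the clean formula
\[
\beta(x)=\int_{\partial O_0} G_S(y,x)\,\frac{\partial(\beta(y)+y_2)}{\partial\mathbf{n}}\,dS_y,\qquad x\in\Omega.
\]
Inserting \eqref{eq2-8} and using $e^{|x_1-y_1|}=e^{|x_1|}e^{-y_1}$ for $y\in\partial O_0$ and $|x_1|$ large then produces \eqref{eq2-25}, the leading coefficients being exactly $-\beta_\pm$; the remainder is governed by $\int_{\partial O_0}\big|\frac{\partial(\beta+y_2)}{\partial\mathbf{n}}\big|\,dS_y$, which I bound by $C\Gamma/b$ below.

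For the sign and the two-sided bound \eqref{eq2-27} the key device is to split off the flux-carrying part of $\beta$. Writing $w=\beta+x_2$ (so $w=\eta/b$), I set
\[
w=-\frac{\lambda_{b,\Gamma}}{b}\,\rho+w_0,
\]
where $w_0$ is the harmonic function, depending only on $\Omega$, with $w_0=0$ on $\partial O_0$ and on $\{x_2=0\}$, $w_0=\pi$ on $\{x_2=\pi\}$, and $w_0-x_2\to0$ as $|x|\to\infty$. Using $\rho=1$ on $\partial O_0$, $\rho=0$ on $\partial S$, and $\rho\to0$ at infinity, one checks directly that the right-hand side satisfies every condition in \eqref{eq2-23}, so by uniqueness the identity holds. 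Differentiating and integrating against $e^{y_1}\sin y_2$ over $\partial O_0$ then gives
\[
\beta_+=\frac{\lambda_{b,\Gamma}}{b}\,\rho_+-\beta_+^0,\qquad \beta_+^0:=\frac{1}{\pi}\int_{\partial O_0} e^{y_1}\sin y_2\,\frac{\partial w_0}{\partial\mathbf{n}}\,dS_y,
\]
and the analogous identity for $\beta_-$, where $\rho_\pm>0$ are as in Lemma~\ref{lem2-5}.

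It remains to settle the signs and sizes. By the maximum principle $0<w_0<\pi$ in $\Omega$, and since $w_0=0$ on $\partial O_0$ the Hopf lemma forces $\frac{\partial w_0}{\partial\mathbf{n}}<0$ on $\partial O_0$; as $e^{y_1}\sin y_2>0$ there, this yields $\beta_\pm^0<0$. Rewriting \eqref{eq2-24} as $\lambda_{b,\Gamma}/b=\lambda_0(\Gamma/b-\gamma_0)$ with $\gamma_0:=\pi\int_{\mathbb{R}\times\{x_2=\pi\}}\big|\frac{\partial\rho}{\partial\mathbf{n}}\big|\,dS_x$ and $\lambda_0>0$ as in Theorem~\ref{thm2-2}, the identity above exhibits $\beta_+=\lambda_0\rho_+(\Gamma/b-\gamma_0)+|\beta_+^0|$ as an affine, strictly increasing function of $\Gamma/b$ on $[\gamma_0,\infty)$ that is already positive at the endpoint $\Gamma/b=\gamma_0$. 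Both inequalities in \eqref{eq2-27} follow from elementary estimates of this affine function together with $\Gamma/b\ge\gamma_0>0$, giving $C''=\lambda_0\rho_++|\beta_+^0|/\gamma_0$ and $C'=\min\{\lambda_0\rho_+,|\beta_+^0|/\gamma_0\}$, both depending only on $\Omega$; the same decomposition bounds $\int_{\partial O_0}\big|\frac{\partial(\beta+y_2)}{\partial\mathbf{n}}\big|\,dS_y$ by $C\Gamma/b$, completing the remainder estimate of the first paragraph. I expect this last paragraph to be the main obstacle: the integrals defining $\beta_\pm$ carry no sign a priori, and only after isolating the $\rho$-component (whose normal derivative is signed by Hopf) and invoking the explicit flux formula \eqref{eq2-24} does the correct $\Gamma/b$-scaling emerge, together with positivity that persists uniformly down to the threshold $\Gamma/b=\gamma_0$.
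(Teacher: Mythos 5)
Your proposal is correct, and its first half --- the reduction via the representation formula \eqref{eqC-1}, the cancellation of the constant boundary datum because $\int_{\partial O_0}\partial_{\mathbf n}G_S\,dS_y=0$, the conversion of $\int_{\partial O_0}y_2\,\partial_{\mathbf n}G_S\,dS_y$ into $\int_{\partial O_0}G_S\,\partial_{\mathbf n}y_2\,dS_y$ by Green's identity in $O_0$, and the insertion of \eqref{eq2-8} --- is exactly the paper's argument. Where you diverge is the part you yourself flag as the main obstacle, the sign and the two-sided bound \eqref{eq2-27}. The paper handles this in one stroke: since $\Gamma/b\ge\pi\int_{\mathbb{R}\times\{x_2=\pi\}}|\partial_{\mathbf n}\rho|\,dS_x$ forces $\lambda_{b,\Gamma}\ge 0$, the constant $-\lambda_{b,\Gamma}/b$ that $\beta+x_2$ takes on $\partial O_0$ is the minimum of its boundary data, so the maximum principle gives $-\lambda_{b,\Gamma}/b<\beta+x_2<\pi$ in $\Omega$ and the Hopf lemma applied directly to the \emph{full} function $\beta+x_2$ yields $\partial_{\mathbf n}(\beta+x_2)<0$ pointwise on $\partial O_0$; combined with the flux condition this gives the exact identity $\int_{\partial O_0}|\partial_{\mathbf n}(\beta+y_2)|\,dS_y=\Gamma/b$, after which \eqref{eq2-27} (and the $O(\Gamma/b)$ remainder) follows immediately by sandwiching the weight $\frac1\pi e^{\pm y_1}\sin y_2$ between two positive constants on the compact set $\partial O_0$. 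Your route --- splitting $\beta+x_2=-\frac{\lambda_{b,\Gamma}}{b}\rho+w_0$, applying Hopf to $\rho$ and $w_0$ separately, and invoking the explicit formula \eqref{eq2-24} --- is also valid (the decomposition is the same one the paper uses later in the proof of part $(ii)$ of Theorem~\ref{th3-1}, with $w_0=x_2-\xi$), and it buys you more: an explicit affine expression $\beta_\pm=\lambda_0\rho_\pm(\Gamma/b-\gamma_0)+|\beta_\pm^0|$ from which the dependence on $\Gamma/b$ can be read off exactly, at the cost of a longer argument and of having to verify the minimization of $t\mapsto f(t)/t$ over $t\ge\gamma_0$ to extract $C'$. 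Both proofs are complete; the paper's is the more economical of the two.
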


\begin{proof}
Since $\Gamma/b \geq \pi \int_{\mathbb{R}\times \{x_2=\pi\}} | \frac{\partial \rho}{\partial \mathbf{n}} |\; dS_x=- \pi \int_{\mathbb{R}\times \{x_2=\pi\}}  \frac{\partial \rho}{\partial \mathbf{n}}\; dS_x$,  by \eqref{eq2-24} we have $\lambda_{b,\Gamma}/b\geq 0$ in view of $\int_{\partial O_0} \frac{\partial \rho}{\partial \mathbf{n}}\; dS_x>0$. By the maximum principle, 
\[
-\lambda_{b,\Gamma}/b<\beta+x_2<\pi \;\;\;\text{in}\;\;\Omega.
\]
Since $\beta+x_2=-\lambda_{b,\Gamma}/b$ on $\partial O_0$, by the Hopf lemma we have $\frac {\partial(\beta+x_2)}{\partial \mathbf{n}}<0$ on $\partial O_0$.  So we get 
\begin{equation}\label{eq2-28}
\int_{\partial O_0}\big| \frac {\partial(\beta+x_2)}{\partial \mathbf{n}} \big|dS_x=-\int_{\partial O_0} \frac {\partial(\beta+x_2)}{\partial \mathbf{n}} dS_x=\frac{\Gamma}{b}. 
\end{equation}
By Lemma~\ref{lemC-1},  for $x\in \Omega$,
\[
\begin{split}
\beta(x)=&\int_{\partial O_0}\,G_S(y,x)\frac{\partial \beta(y)}{\partial \mathbf{n}}-\beta(y)\frac{\partial G_S(y,x)}{\partial \mathbf{n}}\, dS_y
\\
=&\int_{\partial O_0}\,G_S(y,x)\frac{\partial \beta(y)}{\partial \mathbf{n}}-\big(-y_2-\lambda_{b,\Gamma}/b\big)\frac{\partial G_S(y,x)}{\partial \mathbf{n}}\, dS_y
\\
=&\int_{\partial O_0}\,G_S(y,x)\frac{\partial \beta(y)}{\partial \mathbf{n}}-G_S(y,x)\frac{\partial    \big(-y_2-\lambda_\beta/b\big) }{\partial \mathbf{n}}\, dS_y
\\
=&\int_{\partial O_0}\,G_S(y,x)\frac{\partial ( \beta(y)+y_2)}{\partial \mathbf{n}}\, dS_y.
\end{split}
\]
Then, \eqref{eq2-25} follows from \eqref{eq2-8}.


Note that there exist two positive $C^{\prime}$ and $C^{\prime\prime}$ such that for every $y\in \partial O_0$, 
\[
C^{\prime}\leq \frac{1}{\pi}e^{-y_1}\sin y_2, \;\frac{1}{\pi}e^{y_1}\sin y_2 \leq C^{\prime\prime}. 
\]
Then, \eqref{eq2-27} follows from \eqref{eq2-28}.
\end{proof}

We are able to prove the existence of critical points of the following Kirchhoff-Routh function
\[
\mathcal{H}(x)=H(x,x)+2b(x_2+\beta(x)).
\]

\begin{proof}[{\bf Proof of Theorem~\ref{th3-1}}]



{\bf Proof of $(i)$. }

By \eqref{eq2-19} and \eqref{eq2-25}, we have  that for $x\in \Omega$ with large $|x|$, 
\begin{equation}\label{eq3-1}
\mathcal{H}(x)=\begin{cases}
 -\frac{1}{2\pi}\ln (2\sin x_2)
+2b\Big(x_2-\beta_+\frac{ \sin x_2}{e^{x_1}}\Big)
+O\left((1+\Gamma)\frac{\sin x_2}{e^{2|x_1|}}\right),  &x_1>0,
\\
 -\frac{1}{2\pi}\ln (2\sin x_2)
+2b\Big(x_2-\beta_-\frac{ \sin x_2}{e^{-x_1}}\Big)
+O\left((1+\Gamma)\frac{\sin x_2}{e^{2|x_1|}}\right),  &x_1<0.
\end{cases}
\end{equation}
Let  $\Omega_L:=\{x\in \Omega|\; -L<x_1<L\}$.  Without lost of generality, we may assume that $\beta_+\geq \beta_->0$. Then by \eqref{eq3-1} and Proposition~\ref{pro2-8},  there exists $x^L\in \{L\}\times (0,\pi)$ such that 
\[
\mathcal{H}(x^L)=\inf_{x\in \partial \Omega_L} \mathcal{H}(x)=-\frac{1}{2\pi}\ln (2\sin t_0)
+2b\Big(t_0-\frac{\beta_+}{e^{L}}\sin t_0\Big)
+O\big((1+\Gamma)e^{-2L}\big),
\]
where $t_0\in (0,\frac{\pi}{2})$ satisfies $\cot t_0= 4b \pi$. 
Replacing $L$ by $L/2$, we can find  $x^{L/2} \in \{L/2\}\times (0,\pi)$ such that 
\begin{equation}\label{eq2-26Y}
\mathcal{H}(x^{L/2})=-\frac{1}{2\pi}\ln (2\sin t_0)
+2b\Big(t_0-\frac{\beta_+}{e^{L/2}}\sin t_0\Big)
+O((1+\Gamma)e^{-L}).
\end{equation}
So for $L>0$ large enough, we have 
$
\mathcal{H}(x^{L/2})<\inf_{x\in \partial \Omega_L} \mathcal{H}(x).
$
Since $-\frac{\beta_+}{e^{L}}\sin t_0$ is increasing with respect to $L$,  one can further find that 
\[
\mathcal{H}(x^{L/2})<\inf_{x\in \Omega\setminus \Omega_L} \mathcal{H}(x).
\]
This shows that $\mathcal{Q}$ is  non-empty  and $\mathcal{Q} \subset \Omega_L$. By Proposition~\ref{pro2-8} again, we have 
\[
\text{dist}(\mathcal{Q},\partial \Omega)>0.
\]
Then, $(i)$ follows. 

{\bf Proof of $(ii)$.} 

  Since $\beta(x)$ satisfies \eqref{eq2-23}, it easy to see that  $\beta(x)$ has the following expansion
\[
\beta(x)=-\xi(x)-\frac{\lambda_{b,\Gamma}}{b}\rho(x)
\]
where $\lambda_{b,\Gamma}$ is given in \eqref{eq2-24}, $\rho(x)$ is defined in \eqref{eq2-3} and $\xi(x)$ is a unique solution of 
\[
\begin{cases}
\Delta \xi=0\quad \text{in}\; \Omega,\\
\xi=x_2 \;\;  \text{on}\;  \partial O_0,
\;\;\;
\xi=0 \;\; \text{on}\;\;  \partial S, 
\\
\xi \to 0\;\; \text{as}\;\; |x|\to \infty.
\end{cases}
\]
Then $\eta$ has the expansion
\begin{equation}
\eta(x)=b\big(
x_2-\xi(x)-\frac{\lambda_{b,\Gamma}}{b}\rho(x)
\big).
\end{equation}
For $x$ near $\partial O_0$, define $d=dist (x,\partial O_0)$. Let $\hat{x}\in \partial O_0$ be the unique point so that $|x-\hat{x}|=d$. 
Using the Tayler expansions, we have
\[
x_2-\xi(x)=-\frac{\partial (\hat{x}_2-\xi(\hat{x}))}{\partial \mathbf{n}} d+O(d^2),\quad \rho(x)=1-\frac{\partial \rho(\hat{x})}{\partial \mathbf{n}}d+O(d^2). 
\]
So, 
\begin{equation}\label{eq4-2y}
\eta(x)=-\lambda_{b,\Gamma}+A_{b,\Gamma}(\hat{x}) d +O\big((b+\lambda_{b,\Gamma})d^2\big),
\end{equation}
where 
\[
A_{b,\Gamma}(\hat{x})=b\frac{\partial (\xi(\hat{x})-\hat{x}_2)}{\partial \mathbf{n}} +\lambda_{b,\Gamma}\frac{\partial \rho(\hat{x})}{\partial \mathbf{n}}.
\]
By the Hopf lemma, we have $\frac{\partial (\xi(\hat{x})-\hat{x}_2)}{\partial \mathbf{n}} >0$ and  $\frac{\partial \rho(\hat{x})}{\partial \mathbf{n}}>0$ for all $\hat{x}\in \partial O_0$. Since $ \partial O_0$ is a compact set, there exists $0<c_1<c_2$ such that 
\[
c_1<\frac{\partial (\xi(\hat{x})-\hat{x}_2)}{\partial \mathbf{n}}<c_2,\;\;\; c_1< \frac{\partial \rho(\hat{x})}{\partial \mathbf{n}}<c_2.
\]
Since   $\Gamma/b \geq \pi \int_{\mathbb{R}\times \{x_2=\pi\}} | \frac{\partial \rho}{\partial \mathbf{n}} |\; dS_x$, we have $\lambda_{b,\Gamma}\geq 0$. 
So for each $\hat{x}\in \partial O_0$, 
\begin{equation}\label{eq4-3y}
c_1(b+\lambda_{b,\Gamma})<A_{b,\Gamma}(\hat{x})<c_2(b+\lambda_{b,\Gamma}). 
\end{equation}

By Proposition~\ref{pro2-8}, we get from \eqref{eq4-2y} that there exists a small constant $\delta>0$ such that for $0<d<\delta$,
\begin{equation}\label{eq4-4y}
\mathcal{H}(x)=\frac{1}{2\pi}\ln \frac{1}{2 d}+2A_{b,\Gamma}(\hat{x}) d -2\lambda_{b,\Gamma}+O\big(d+(b+\lambda_{b,\Gamma})d^2\big).
\end{equation}
Let $0<\theta_1<1/2$ be a small constant and $\theta_2>2$ be a large constant, which will be determined later. Define  
\[
D_{b,\Gamma}:=\Big\{ x\in \Omega:\; \frac{\theta_1}{b+\lambda_{b,\Gamma}}<d< \frac{\theta_2}{b+\lambda_{b,\Gamma}}
\Big\}.
\]
For any $x\in \bar{D}_{b,\Gamma}$, $d=\frac{\theta}{b+\lambda_{b,\Gamma}}$ for some $\theta_1\leq \theta\leq \theta_2$. So we have 
\begin{equation}\label{eq4-5y}
\begin{split}
\mathcal{H}(x)=&\frac{1}{2\pi}\ln \frac{b+\lambda_{b,\Gamma}}{2 \theta}+2A_{b,\Gamma}(\hat{x}) \frac{\theta}{b+\lambda_{b,\Gamma}} -2\lambda_{b,\Gamma}+O\Big(\frac{1}{b+\lambda_{b,\Gamma}}\Big),
\end{split}
\end{equation}
which combining \eqref{eq4-3y} gives that for $x\in \partial D_{b,\Gamma}$, namely $\theta=\theta_1$ or $\theta=\theta_2$, 
\begin{equation}\label{eq4-6y}
\mathcal{H}(x)>\frac{1}{2\pi}\ln (b+\lambda_{b,\Gamma})-2\lambda_{b,\Gamma}+\min_{i=1,2}\Big\{\frac{1}{2\pi}\ln \frac{1}{2\theta_i}+2c_1\theta_i\Big\}+O\Big(\frac{1}{b+\lambda_{b,\Gamma}}\Big),
\end{equation}
if $b>0$ large enough.

On the other hand, take some $x\in D_{b,\Gamma}$ with $d=\frac{1}{b+\lambda_{b,\Gamma}}$.  By \eqref{eq4-3y} and \eqref{eq4-5y}, we have 
\begin{equation}\label{eq4-7y}
\begin{split}
\mathcal{H}(x)<&\frac{1}{2\pi}\ln (b+\lambda_{b,\Gamma})-2\lambda_{b,\Gamma}+\frac{1}{2\pi}\ln \frac{1}{2}+2c_2+O\Big(\frac{1}{b+\lambda_{b,\Gamma}}\Big),
\end{split}
\end{equation}
if $b>0$ is large enough. 
By taking $\theta_1>0$ small enough and $\theta_2>0$ large enough, one can prove form \eqref{eq4-6y} and \eqref{eq4-7y} that 
\[
\inf_{x\in D_{b,\Gamma}} \mathcal{H}(x)<\inf_{x\in \partial D_{b,\Gamma}} \mathcal{H}(x),
\]
and $(ii)$ follows.

{\bf Proof of $(iii)$.} 

The existence of $\mathcal{Q}_1$ follows from $(ii)$. We next prove the existence of $\mathcal{Q}_2$. 

Let $\sigma>0$ be a small constant. Then  for  
\[
  \pi \int_{\mathbb{R}\times \{x_2=\pi\}} | \frac{\partial \rho}{\partial \mathbf{n}} |\; dS_x\leq \Gamma/b\leq  \pi \int_{\mathbb{R}\times \{x_2=\pi\}} | \frac{\partial \rho}{\partial \mathbf{n}} |\; dS_x+\sigma \int_{\partial O_0} \frac{\partial \rho}{\partial \mathbf{n}} \; dS_x,
  \]
we get from \eqref{eq2-24} that $0\leq \lambda_{b,\Gamma} \leq \sigma b$. So by \eqref{eq4-3y}-\eqref{eq4-4y}, for $x\in \Omega$ with $d=\delta$, 
  \begin{equation}\label{eq2-34y}
\begin{split}
\mathcal{H}(x)=&\frac{1}{2\pi}\ln \frac{1}{2 \delta}+2A_{b,\Gamma}(\hat{x}) \delta -2\lambda_{b,\Gamma}+O\big(\delta+(b+\lambda_{b,\Gamma})\delta^2\big)
\\
>&\frac{1}{2\pi}\ln \frac{1}{2 \delta}+2c_1(b+\lambda_{b,\Gamma}) \delta -2\lambda_{b,\Gamma}+O\big(\delta+(b+\lambda_{b,\Gamma})\delta^2\big)
\\
>&\frac{1}{2\pi}\ln \frac{1}{2 \delta}+(c_1\delta-2\sigma)b,
\end{split}
\end{equation}
if $b>0$ is large enough. 

On the other hand, by \eqref{eq2-26Y} there exists $x^{L/2}\in \Omega_\delta$ such that 
\[
\mathcal{H}(x^{L/2})<-\frac{1}{2\pi}\ln (2\sin t_0)+2bt_0
\]
if $L>0$ is large enough, where $\Omega_\delta:=\{x\in \Omega:\; \text{dist}(x,\partial O_0)>\delta\}$. Since $t_0\in (0,\frac{\pi}{2})$ and $\cot t_0=4b\pi$, we have that for $b>0$ large enough,
\[
t_0=\arctan \frac{1}{4b\pi}=\frac{1}{4b\pi}+O(\frac{1}{b^3}), \quad \sin t_0=\frac{1}{4b\pi}+O(\frac{1}{b^3}).
\]
Then we have
  \begin{equation}\label{eq2-35y}
\mathcal{H}(x^{L/2})<-\frac{1}{2\pi}\ln (2\sin t_0)+2bt_0=\frac{1}{2\pi}\ln b+\frac{1}{2\pi}\ln 2\pi+\frac{1}{2\pi}+O(\frac{1}{b^2}).
\end{equation}

Hence, we finally obtain from \eqref{eq2-34y}-\eqref{eq2-35y} that for fixed $0<\sigma<c_1\delta/2$, 
\[
\mathcal{H}(x^{L/2})<\inf_{x\in \Omega_\delta \setminus \Omega_L} \mathcal{H}(x)
\]
if $b>0$ is large enough, where $\Omega_L$ is given in the proof of $(i)$. Then, $(iii)$ follows. 
\end{proof}

\section{Existence and profile of maximizers} 

In this section we will establish the existence of solution for problem \eqref{eq1-8}-\eqref{eq1-9} via a variational argument.  Suppose that  $b,\Gamma$ satisfy the condition in Theorem~\ref{thm1-4}. 
Let $L$ be a fixed large constant so that $\mathcal{Q}\subset \Omega_{L/2}$, where $\mathcal{Q}$ is given in $(i)$ of Theorem~\ref{th3-1}, 
\[
\Omega_L=\{
x\in \Omega|\; -L<x_1<L
\}.
\]
Let $0<\varepsilon <\sqrt{|\Omega_L|}$ be a parameter, where $|\Omega_L|$ denotes the Lebesgue measure of $\Omega_L$.  Set 
\[
\mathcal{A}_\varepsilon:=\Big\{
\omega\in L^\infty(\Omega)\big| \int_\Omega \omega\; dx=1,\;\; 0\leq \omega\leq \frac{1}{\varepsilon^2},\;\; \text{supp} (\omega)\subset \Omega_L
\Big\}.
\]
Define the functional $\mathcal{E} : \mathcal{A}_\varepsilon\to \mathbb{R}$,
\begin{equation}
\mathcal{E} (\omega)=\frac{1}{2}\int_{\Omega_L} \omega \mathcal{G} \omega \;dx-\int_{\Omega_L} \omega \eta \;dx.
\end{equation}
For each $\omega\in \mathcal{A}_\varepsilon$,  by the $L^p$ estimate we have $\mathcal{G} \omega\in W^{2,p}(\Omega_L)$ for any $1<p<+\infty$.  As a result, $\mathcal{E}$ is well-defined on $\mathcal{A}_\varepsilon$.

\begin{lemma}\label{lem4-1} There exists $\omega_\varepsilon\in \mathcal{A}_\varepsilon$ such that 
\begin{equation}\label{eq4-2}
\mathcal{E} (\omega_\varepsilon)=\max_{\omega \in \mathcal{A}_\varepsilon}   \mathcal{E}_\varepsilon (\omega)<+\infty.
\end{equation}

Moreover, there exists a Lagrange multiplier $\mu_\varepsilon\geq -b\pi$ such that 
\begin{equation}\label{eq4-3}
\omega_\varepsilon=\frac{1}{\varepsilon^2} \mathbf{1}_{\{
\psi_\varepsilon>0
\}}\;\;\; a.e. \;\;\text{in}\;\; \Omega_L,
\end{equation}
where 
\[
\psi_\varepsilon=\mathcal{G} \omega_\varepsilon-\eta-\mu_\varepsilon. 
\]
\end{lemma}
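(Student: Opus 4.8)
The plan is to prove existence of a maximizer by the direct method, then extract the Euler--Lagrange structure via a variation that perturbs the constraints, and finally read off the bang-bang form $\omega_\varepsilon = \frac{1}{\varepsilon^2}\mathbf{1}_{\{\psi_\varepsilon>0\}}$. First I would establish that $\mathcal{E}$ is bounded above on $\mathcal{A}_\varepsilon$: since $\mathcal{G}$ maps $L^2(\Omega)$ continuously into $H^1(\Omega)$ (Lemma~\ref{lem2-2y}) and every $\omega\in\mathcal{A}_\varepsilon$ satisfies $\|\omega\|_{L^2(\Omega_L)}\le \varepsilon^{-1}$ with support in the bounded set $\Omega_L$, the energy term $\frac{1}{2}\int_{\Omega_L}\omega\,\mathcal{G}\omega$ is controlled by $\frac12\|\mathcal{G}\|\,\|\omega\|_{L^2}^2$, and $\int_{\Omega_L}\omega\,\eta$ is bounded because $\eta$ is continuous on the compact set $\overline{\Omega}_L$ while $\int_\Omega\omega=1$. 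Taking a maximizing sequence $\{\omega_n\}\subset\mathcal{A}_\varepsilon$, the uniform bound $0\le\omega_n\le\varepsilon^{-2}$ with fixed support gives weak-$*$ compactness in $L^\infty(\Omega_L)$ (equivalently weak compactness in $L^2$), so $\omega_n\rightharpoonup\omega_\varepsilon$ along a subsequence. The set $\mathcal{A}_\varepsilon$ is convex and closed under this convergence (the pointwise bounds, the mass constraint $\int\omega=1$, and the support restriction all pass to the weak limit), so $\omega_\varepsilon\in\mathcal{A}_\varepsilon$.

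The decisive point is upper semicontinuity of $\mathcal{E}$ along the weak limit. Here I would use that $\mathcal{G}$ is a \emph{compact} operator when restricted to $L^2(\Omega_L)\to L^2(\Omega_L)$: by $L^p$ elliptic estimates $\mathcal{G}\omega_n$ is bounded in $W^{2,p}(\Omega_L)$, hence precompact in $L^2(\Omega_L)$ by Rellich--Kondrachov on the bounded domain $\Omega_L$, so $\mathcal{G}\omega_n\to\mathcal{G}\omega_\varepsilon$ strongly in $L^2$. Combining weak convergence of $\omega_n$ with strong convergence of $\mathcal{G}\omega_n$ yields $\int\omega_n\,\mathcal{G}\omega_n\to\int\omega_\varepsilon\,\mathcal{G}\omega_\varepsilon$, and the linear term converges since $\eta\in L^2(\Omega_L)$. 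Therefore $\mathcal{E}(\omega_n)\to\mathcal{E}(\omega_\varepsilon)$ and $\omega_\varepsilon$ attains the maximum.

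For the profile, the key is a standard variational inequality. For any competitor $\omega\in\mathcal{A}_\varepsilon$ set $\omega_t=\omega_\varepsilon+t(\omega-\omega_\varepsilon)\in\mathcal{A}_\varepsilon$ for $t\in[0,1]$ (using convexity); since $\mathcal{G}$ is self-adjoint and $\omega_\varepsilon$ is a maximizer, computing $\frac{d}{dt}\mathcal{E}(\omega_t)\big|_{t=0^+}\le 0$ gives
\begin{equation}\label{eq-var-ineq}
\int_{\Omega_L}(\mathcal{G}\omega_\varepsilon-\eta)(\omega-\omega_\varepsilon)\,dx\le 0\qquad\text{for all }\omega\in\mathcal{A}_\varepsilon.
\end{equation}
Because only the mass constraint $\int_{\Omega_L}\omega=1$ couples the admissible variations, \eqref{eq-var-ineq} forces the existence of a Lagrange multiplier $\mu_\varepsilon\in\mathbb{R}$ such that $\omega_\varepsilon$ maximizes $\int_{\Omega_L}(\mathcal{G}\omega_\varepsilon-\eta-\mu_\varepsilon)\omega$ over all $\omega$ with $0\le\omega\le\varepsilon^{-2}$; this is a pointwise rearrangement problem whose unique solution is the bang-bang density $\omega_\varepsilon=\varepsilon^{-2}\mathbf{1}_{\{\psi_\varepsilon>0\}}$ where $\psi_\varepsilon=\mathcal{G}\omega_\varepsilon-\eta-\mu_\varepsilon$ (on the level set $\{\psi_\varepsilon=0\}$ the value is free but has measure zero for the characterization to hold a.e.). The bound $\mu_\varepsilon\ge -b\pi$ I expect to obtain by testing \eqref{eq-var-ineq} against a concrete competitor and using that $\eta\le b\pi$ in $\Omega$ together with $\mathcal{G}\omega_\varepsilon\ge 0$ from the maximum principle. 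The main obstacle is making the passage from the variational inequality \eqref{eq-var-ineq} to the explicit multiplier and the bang-bang structure fully rigorous, in particular justifying that the multiplier is a genuine constant rather than measure-dependent; this is handled by the classical bathtub-principle argument for maximizing a linear functional under an $L^\infty$ box constraint and a single integral constraint, which I would carry out carefully.
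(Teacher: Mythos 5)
Your overall strategy (direct method for existence, then the variational inequality and the bathtub principle for the profile) is the same as the paper's; the one genuine difference in the existence step is that you obtain convergence of the quadratic term via compactness of $\mathcal{G}$ on the bounded set $\Omega_L$ (through $W^{2,p}$ bounds and Rellich), whereas the paper pairs the weak-$*$ limit of the product $\omega_n(x)\omega_n(y)$ in $L^\infty(\Omega_L\times\Omega_L)$ against the kernel $G(\cdot,\cdot)\in L^1(\Omega_L\times\Omega_L)$. Both routes work and buy essentially the same thing.

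There is, however, a gap in your treatment of the level set $\{\psi_\varepsilon=0\}$. The bathtub principle only gives
$\omega_\varepsilon=\varepsilon^{-2}\mathbf{1}_{\{\psi_\varepsilon>0\}}+c_\varepsilon\varepsilon^{-2}\mathbf{1}_{\{\psi_\varepsilon=0\}}$
for some $c_\varepsilon\in[0,1]$, and your assertion that $\{\psi_\varepsilon=0\}$ ``has measure zero'' is exactly the point that cannot be assumed: for a general $W^{2,p}$ function there is no reason for a level set to be null, and no argument you give rules it out. The paper disposes of the second term differently and without any measure-zero claim: on $\{\mathcal{G}\omega_\varepsilon-\eta=\mu_\varepsilon\}$ one has, almost everywhere, $\omega_\varepsilon=-\Delta\mathcal{G}\omega_\varepsilon=-\Delta(\eta+\mu_\varepsilon)=0$, using that the second derivatives of a $W^{2,p}$ function agree a.e.\ on a set where it coincides with the harmonic function $\eta+\mu_\varepsilon$. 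You should replace the measure-zero claim with this observation. Separately, your plan for $\mu_\varepsilon\ge-b\pi$ (``testing against a concrete competitor'') is vaguer than what is actually needed; the paper's argument is a direct contradiction: if $\mu_\varepsilon<-b\pi$ then $\psi_\varepsilon=\mathcal{G}\omega_\varepsilon-\eta-\mu_\varepsilon>0$ everywhere in $\Omega_L$ (using $\mathcal{G}\omega_\varepsilon>0$ and $\eta\le b\pi$), so \eqref{eq4-3} forces $\omega_\varepsilon=\varepsilon^{-2}$ a.e.\ in $\Omega_L$ and $\int_{\Omega_L}\omega_\varepsilon=|\Omega_L|/\varepsilon^2>1$, contradicting the mass constraint (this is where the standing assumption $\varepsilon<\sqrt{|\Omega_L|}$ enters). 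Your ingredients are the right ones, but the contradiction with the normalization is the missing mechanism.
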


\begin{proof}
By the expansion \eqref{eq2-4}, we have $G(\cdot,\cdot)\in L^1(\Omega_L\times \Omega_L)$. Then for any $\omega\in \mathcal{A}_\varepsilon$, 
\[
\begin{split}
\mathcal{E} (\omega)=&\frac{1}{2}\int_{\Omega_L}\int_{\Omega_L} G(y,x)\omega(y)  \omega(x) \;dxdy-\int_{\Omega_L} \omega(x) \eta(x) \;dx
\\
\leq & \frac{1}{2\varepsilon^4}\|G(\cdot,\cdot)\|_{L^1(\Omega_L\times \Omega_L)}+\lambda_\beta,
\end{split}
\]
where we have used $-\lambda_\beta\leq \eta\leq b\pi$ on $\bar{\Omega}$, and hence on $\bar{\Omega}_L$. Thus, $\max_{\omega \in \mathcal{A}_\varepsilon}   \mathcal{E} (\omega)<+\infty$. 
Let us take a maximizing sequence $\{\omega_n\} \subset \mathcal{A}_\varepsilon$ such that as $n\to +\infty$, 
\[
\mathcal{E} (\omega_n)\to  \max_{\omega \in \mathcal{A}_\varepsilon}   \mathcal{E} (\omega).
\]
Since $\mathcal{A}_\varepsilon$ is closed in $L^\infty(\Omega_L)$ weak star topology, there exists $\omega\in L^\infty(\Omega_L)$ such that 
\[
\begin{split}
&\omega_n(x)\to \omega (x)\;\;\; \text{weakly star in}\;\; L^\infty(\Omega_L),
\\
&\omega_n(x)\omega_n(y)\to \omega(x)\omega(y)\;\;\; \text{weakly star in}\;\; L^\infty(\Omega_L \times \Omega_L).
\end{split}
\]
It is easy to check that $\omega\in \mathcal{A}_\varepsilon$.   Note that  $G(\cdot,\cdot) \in L^1(\Omega_L \times \Omega_L)$ and $\eta\in L^1(\Omega_L)$. Then we conclude that  $\mathcal{E} (\omega)=\lim_{n\to +\infty} \mathcal{E} (\omega_n)=\max_{\mathcal{A}_\varepsilon}  \mathcal{E}$. So, \eqref{eq4-2} holds.

We now begin to  prove \eqref{eq4-3}. For arbitrary $\tilde{\omega}\in \mathcal{A}_\varepsilon$, define a path from $[0,1]$ to $\mathcal{A}_\varepsilon$ so that 
\[
\omega^s=\omega+s(\tilde{\omega}-\omega), \;\;\; s\in [0,1]. 
\]
Since $\omega$ is a maximizer, we have $\mathcal{E} (\omega)\geq \mathcal{E} (\omega^s)$, which gives  
\[
0\geq  \frac{d}{ds}\big|_{s=0^+} \mathcal{E} (\omega^s)=\int_{\Omega_L} (\tilde{\omega}-\omega)\big( \mathcal{G}\omega-\eta\big)\; dx.
\]
So for any $\tilde{\omega}\in \mathcal{A}_\varepsilon$, 
\[
\int_{\Omega_L} \omega\big( \mathcal{G}\omega-\eta\big)\; dx\geq \int_{\Omega_L} \tilde{\omega}\big( \mathcal{G}\omega-\eta\big)\; dx. 
\]
By the bathtub principle, we have 
\begin{equation}\label{eq4-4}
\omega=\frac{1}{\varepsilon^2}\mathbf{1}_{ \{\mathcal{G}\omega-\eta-\mu_\varepsilon
>0\}  } +\frac{c_\varepsilon}{\varepsilon^2}\mathbf{1}_{ \{\mathcal{G}\omega-\eta-\mu_\varepsilon
=0\}  }\;\;\; \text{a.e. in}\;\; \Omega_L,
\end{equation}
where 
\[
\mu_\varepsilon=\inf\Big\{
s\in \mathbb{R} \; |\; \big|\big\{    
x\in \Omega_L\;|\; \mathcal{G}\omega-\eta>s
\big\}  \big|\leq \varepsilon^2
\Big\} 
\]
and 
\[
c_\varepsilon\big| \big\{    
x\in \Omega_L\;|\; \mathcal{G}\omega-\eta=\mu_\varepsilon
\big\}\big|=\varepsilon^2-\big| \big\{    
x\in \Omega_L\;|\; \mathcal{G}\omega-\eta>\mu_\varepsilon
\big\}\big|. 
\]

On the set $\big\{    
x\in \Omega_L\;|\; \mathcal{G}\omega-\eta=\mu_\varepsilon
\big\}$, we have 
\[
\omega=-\Delta \mathcal{G}\omega=-\Delta (\eta+\mu_\varepsilon)=0,
\]
which, together with \eqref{eq4-4}, gives \eqref{eq4-3}.

Finally, we prove that $\mu_\varepsilon\geq -b\pi$. We suppose by contradiction that $\mu_\varepsilon<-b\pi$.  Since by the expansion \eqref{eq2-4} $\mathcal{G}$ is a positive elliptic operator, we have $\mathcal{G}\omega>0$ in $\Omega_L$.  So for any $x\in \Omega_L$,  
\[
\mathcal{G} \omega_\varepsilon(x)-\eta(x)-\mu_\varepsilon>-\eta(x)-\mu_\varepsilon>-b\pi-\mu_\varepsilon>0.
\]
By \eqref{eq4-3}, we have $\omega=\frac{1}{\varepsilon^2}$ a.e. in $\Omega_L$. Thus, $\int_{\Omega_L} \omega\; dx= \frac{|\Omega_L|}{\varepsilon^2}$, which is a contradiction to $\int_{\Omega_L} \omega\; dx=1$.
\end{proof}

Next, we give a lower bound of $\mathcal{E} (\omega_\varepsilon)$. 

\begin{lemma}\label{lem4-2} There exists  $C>0$ such that for all $\varepsilon>0$ small enough, 
\[
\mathcal{E} (\omega_\varepsilon)\geq \frac{1}{4\pi}\ln \frac{1}{\varepsilon}-C.
\]
\end{lemma}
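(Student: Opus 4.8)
The plan is to exploit that $\omega_\varepsilon$ is a \emph{maximizer} of $\mathcal{E}$ over $\mathcal{A}_\varepsilon$, so it suffices to exhibit one admissible competitor whose energy already attains the desired lower bound. First I would fix an interior point $x_0\in\Omega_L$ with $\mathrm{dist}(x_0,\partial\Omega_L)>0$ and set $s=\varepsilon/\sqrt{\pi}$, so that for $\varepsilon$ small the ball $B_s(x_0)$ lies in $\Omega_L$. The test vorticity
\[
\tilde\omega=\frac{1}{\varepsilon^2}\mathbf{1}_{B_s(x_0)}
\]
has $\int_\Omega\tilde\omega\,dx=\frac{1}{\varepsilon^2}\,\pi s^2=1$ and $0\le\tilde\omega\le\varepsilon^{-2}$, hence $\tilde\omega\in\mathcal{A}_\varepsilon$; maximality gives $\mathcal{E}(\omega_\varepsilon)\ge\mathcal{E}(\tilde\omega)$, and the whole problem reduces to bounding $\mathcal{E}(\tilde\omega)$ from below.

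Next I would evaluate $\mathcal{E}(\tilde\omega)$ using the decomposition of the Green kernel from Theorem~\ref{thm2-2}, writing
\[
\mathcal{E}(\tilde\omega)=\frac12\iint \Big(\tfrac{1}{2\pi}\ln\tfrac{1}{|y-x|}-H_0(y,x)+\lambda_0\rho(x)\rho(y)\Big)\tilde\omega(y)\tilde\omega(x)\,dx\,dy-\int\tilde\omega\,\eta\,dx.
\]
The three lower-order contributions are all $O(1)$: since $x_0$ is interior, $H_0$ is bounded near $(x_0,x_0)$ and $\iint H_0\,\tilde\omega\,\tilde\omega\to H_0(x_0,x_0)$; the term $\tfrac12\lambda_0\big(\int\rho\,\tilde\omega\big)^2$ is controlled by $0<\rho\le1$; and $\big|\int\tilde\omega\,\eta\big|\le\|\eta\|_{L^\infty(\Omega_L)}$, which is finite by the bounds $-\lambda_{b,\Gamma}\le\eta\le b\pi$.

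The decisive term is the logarithmic self-interaction. Splitting $\ln\frac{1}{|y-x|}=\ln\frac1s+\ln\frac{s}{|y-x|}$ and rescaling $y=s\tilde y$, $x=s\tilde x$ gives
\[
\iint_{B_s(x_0)\times B_s(x_0)}\ln\tfrac{1}{|y-x|}\,dx\,dy=\pi^2 s^4\ln\tfrac1s+C_0 s^4,\qquad C_0:=\iint_{B_1\times B_1}\ln\tfrac{1}{|\tilde y-\tilde x|}\,d\tilde x\,d\tilde y,
\]
with $C_0$ finite because $\ln|\cdot|^{-1}$ is locally integrable in the plane. Multiplying by $\frac{1}{4\pi\varepsilon^4}$ and using $s^4=\varepsilon^4/\pi^2$, the leading part equals $\frac{1}{4\pi}\ln\frac1s+O(1)=\frac{1}{4\pi}\ln\frac1\varepsilon+O(1)$.

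Collecting everything yields $\mathcal{E}(\tilde\omega)=\frac{1}{4\pi}\ln\frac1\varepsilon+O(1)$, and hence $\mathcal{E}(\omega_\varepsilon)\ge\mathcal{E}(\tilde\omega)\ge\frac{1}{4\pi}\ln\frac1\varepsilon-C$ for all $\varepsilon$ small. The only genuinely computational point---the main obstacle, though it is routine---is the scaling evaluation of the double logarithmic integral, which produces the sharp constant $\frac{1}{4\pi}$; everything else reduces to boundedness coming from the interior location of $x_0$ together with the uniform bounds on $\rho$ and $\eta$.
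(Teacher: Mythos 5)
Your proposal is correct and follows essentially the same route as the paper: the same competitor $\tilde\omega=\varepsilon^{-2}\mathbf{1}_{B_{\varepsilon/\sqrt{\pi}}(x_0)}$, the same appeal to maximality, and the same observation that the regular part of the Green's function and the $\eta$-term contribute only $O(1)$. The only (immaterial) difference is that you evaluate the logarithmic self-interaction exactly by scaling, whereas the paper simply bounds $\ln\frac{1}{|y-x|}$ from below pointwise on the small ball; both give $\frac{1}{4\pi}\ln\frac{1}{\varepsilon}+O(1)$.
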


\begin{proof}
Take $x_0\in \Omega_L$ and define 
\[
\tilde{\omega}_\varepsilon=\frac{1}{\varepsilon^2} \mathbf{1}_{B_{\varepsilon/\sqrt{\pi}}(x_0)}.
\]
Then for $\varepsilon>0$ small enough, we have $\tilde{\omega}_\varepsilon\in \mathcal{A}_\varepsilon$.  By \eqref{eq2-4}, we have that  $H(x,y)=H_0(x,y)-\lambda_0 \rho(x)\rho(y)$ is bounded in $B_{\varepsilon/\sqrt{\pi}}(x_0)\times B_{\varepsilon/\sqrt{\pi}}(x_0)$. So, we have 
\[
\begin{split}
\mathcal{E}(\tilde{\omega}_\varepsilon)=&\frac{1}{2}\int_{\Omega_L}\int_{\Omega_L} G(y,x) \tilde{\omega}_\varepsilon(x)\tilde{\omega}_\varepsilon(y)dx dy-\int_{\Omega_L}\eta(x)\tilde{\omega}_\varepsilon(x)dx
\\
=&\frac{1}{2}\int_{B_{\varepsilon/\sqrt{\pi}}(x_0)}\int_{B_{\varepsilon/\sqrt{\pi}}(x_0)} G(y,x) \tilde{\omega}_\varepsilon(x)\tilde{\omega}_\varepsilon(y)dx dy-\int_{B_{\varepsilon/\sqrt{\pi}}(x_0)}\eta(x)\tilde{\omega}_\varepsilon(x)dx
\\
\geq& \frac{1}{4\pi \varepsilon^4}\int_{B_{\varepsilon/\sqrt{\pi}}(x_0)}\int_{B_{\varepsilon/\sqrt{\pi}}(x_0)}\ln \frac{1}{|y-x|} dx dy-C
\\
\geq&  \frac{1}{4\pi \varepsilon^4} \big(\pi (\varepsilon/\sqrt{\pi})^2\big)^2\ln \frac{1}{\varepsilon}-C=\frac{1}{4\pi}\ln \frac{1}{\varepsilon}-C.
\end{split}
\]
We complete the proof. 
\end{proof}

We also have the following lower bounded estimate of the Lagrange multiplier $\mu_\varepsilon$. 

\begin{lemma}\label{lem4-3} There exists  $C>0$ such that for all $\varepsilon>0$ small enough, 
\[
\mu_\varepsilon\geq 2\mathcal{E} (\omega_\varepsilon)-C.
\]
As a consequence, 
\[
\mu_\varepsilon\geq \frac{1}{2\pi}\ln \frac{1}{\varepsilon}-C. 
\]
\end{lemma}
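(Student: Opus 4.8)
The plan is to exploit the variational characterization \eqref{eq4-3} of the maximizer together with the constraint $\int_\Omega \omega_\varepsilon\,dx=1$. The key observation is that on the support of $\omega_\varepsilon$ we have $\psi_\varepsilon=\mathcal{G}\omega_\varepsilon-\eta-\mu_\varepsilon>0$, so that integrating this inequality against $\omega_\varepsilon$ and using the normalization should directly relate $\mu_\varepsilon$ to the energy $\mathcal{E}(\omega_\varepsilon)$. Concretely, I would start from
\[
\int_{\Omega_L}\omega_\varepsilon\big(\mathcal{G}\omega_\varepsilon-\eta-\mu_\varepsilon\big)\,dx\geq 0,
\]
which holds because the integrand is nonnegative on $\{\psi_\varepsilon>0\}=\mathrm{supp}(\omega_\varepsilon)$ (up to the measure-zero set where $\omega_\varepsilon=0$ anyway). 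Using $\int_{\Omega_L}\omega_\varepsilon\,dx=1$, the last term contributes exactly $-\mu_\varepsilon$, so this rearranges to
\[
\mu_\varepsilon\leq \int_{\Omega_L}\omega_\varepsilon\,\mathcal{G}\omega_\varepsilon\,dx-\int_{\Omega_L}\omega_\varepsilon\,\eta\,dx.
\]
However, the stated inequality runs the other way ($\mu_\varepsilon\geq 2\mathcal{E}(\omega_\varepsilon)-C$), so I expect the correct route is to bound $\mu_\varepsilon$ from below rather than above.

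To get the lower bound, I would instead integrate the \emph{reverse} inequality off the support. Recall $2\mathcal{E}(\omega_\varepsilon)=\int_{\Omega_L}\omega_\varepsilon\mathcal{G}\omega_\varepsilon\,dx-2\int_{\Omega_L}\omega_\varepsilon\eta\,dx$. Testing the nonnegativity of $\psi_\varepsilon$ on the patch and using the normalization gives $\int_{\Omega_L}\omega_\varepsilon\mathcal{G}\omega_\varepsilon\,dx\geq \int_{\Omega_L}\omega_\varepsilon\eta\,dx+\mu_\varepsilon$. Substituting this into the energy identity yields
\[
2\mathcal{E}(\omega_\varepsilon)\leq \int_{\Omega_L}\omega_\varepsilon\,\mathcal{G}\omega_\varepsilon\,dx-\int_{\Omega_L}\omega_\varepsilon\,\eta\,dx-\int_{\Omega_L}\omega_\varepsilon\,\eta\,dx+\mu_\varepsilon,
\]
and then the bound $-\lambda_\beta\le\eta\le b\pi$ on $\bar\Omega_L$ (already used in Lemma~\ref{lem4-1}) controls the remaining $\eta$-integrals by a constant $C$ independent of $\varepsilon$, since $\int\omega_\varepsilon=1$. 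This delivers $2\mathcal{E}(\omega_\varepsilon)\leq\mu_\varepsilon+C$, i.e.\ $\mu_\varepsilon\geq 2\mathcal{E}(\omega_\varepsilon)-C$, the first claim.

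For the final consequence, I would simply combine this with the lower energy bound of Lemma~\ref{lem4-2}, namely $\mathcal{E}(\omega_\varepsilon)\geq\frac{1}{4\pi}\ln\frac{1}{\varepsilon}-C$. Then
\[
\mu_\varepsilon\geq 2\mathcal{E}(\omega_\varepsilon)-C\geq \frac{1}{2\pi}\ln\frac{1}{\varepsilon}-C',
\]
absorbing constants into a single $C$. The main technical point to handle carefully is the sign bookkeeping on the set $\{\psi_\varepsilon=0\}$: there $\omega_\varepsilon=0$ a.e.\ by \eqref{eq4-3}, so it contributes nothing to either integral, and the inequality $\int\omega_\varepsilon\psi_\varepsilon\,dx\ge 0$ is in fact an equality restricted to the patch. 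I expect the only real obstacle to be getting the direction of the energy identity right and ensuring the $\eta$-terms are genuinely bounded uniformly in $\varepsilon$, which follows from $\|\eta\|_{L^\infty(\bar\Omega_L)}<\infty$ and the unit mass constraint.
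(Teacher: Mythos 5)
Your argument has a sign error at its core, and the step that would fix it is precisely the part of the paper's proof that is missing from your proposal. Writing $2\mathcal{E}(\omega_\varepsilon)=\int\omega_\varepsilon\psi_\varepsilon\,dx+\mu_\varepsilon-\int\omega_\varepsilon\eta\,dx$ (using $\int\omega_\varepsilon\,dx=1$), the desired bound $2\mathcal{E}(\omega_\varepsilon)\le\mu_\varepsilon+C$ is equivalent, since $\eta$ is bounded, to an \emph{upper} bound $\int_{\Omega_L}\omega_\varepsilon\psi_\varepsilon\,dx\le C$. The positivity of $\psi_\varepsilon$ on $\mathrm{supp}(\omega_\varepsilon)$, which is the only input you use, gives the \emph{lower} bound $\int\omega_\varepsilon\psi_\varepsilon\,dx\ge0$, i.e.\ $\mu_\varepsilon\le\int\omega_\varepsilon\mathcal{G}\omega_\varepsilon\,dx-\int\omega_\varepsilon\eta\,dx$ --- the reverse of what is claimed. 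Your ``substitution'' step is also algebraically inconsistent: the right-hand side of your displayed inequality equals $2\mathcal{E}(\omega_\varepsilon)+\mu_\varepsilon$, so the display reduces to $\mu_\varepsilon\ge0$ at best, and in any case it does not follow from the inequality $\int\omega_\varepsilon\mathcal{G}\omega_\varepsilon\,dx\ge\int\omega_\varepsilon\eta\,dx+\mu_\varepsilon$, which points the wrong way for an upper bound on $2\mathcal{E}$.

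The genuinely missing idea is the control $\int_{\Omega_L}\omega_\varepsilon(\tilde\psi_\varepsilon)_+\,dx\le C$, which cannot be obtained from pointwise sign considerations alone. The paper introduces the auxiliary function $\tilde\psi_\varepsilon(x)=\int_\Omega G_0(y,x)\omega_\varepsilon(y)\,dy-\mu_\varepsilon-b\pi$, shows $\psi_\varepsilon>\tilde\psi_\varepsilon$ so that $\tilde D_\varepsilon:=\{\tilde\psi_\varepsilon>0\}\subset\mathrm{supp}(\omega_\varepsilon)$ and hence $|\tilde D_\varepsilon|\le\varepsilon^2$, verifies $(\tilde\psi_\varepsilon)_+\in H_0^1(\Omega)$ using $\mu_\varepsilon\ge-b\pi$, and then combines the energy identity $\int|\nabla(\tilde\psi_\varepsilon)_+|^2\,dx=\int\omega_\varepsilon(\tilde\psi_\varepsilon)_+\,dx$ with H\"older and the embedding $W^{1,1}(\Omega_L)\subset L^2(\Omega_L)$; the factor $|\tilde D_\varepsilon|/\varepsilon^2\le1$ is exactly what makes the resulting constant uniform in $\varepsilon$. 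This Turkington-type measure-plus-Sobolev argument is the substance of the lemma, and without it your proof does not close. The final deduction from Lemma~4.2 is fine, but it rests on the part that is not established.
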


\begin{proof} 
Let  us introduce an auxiliary  function 
\[
\tilde{\psi}_\varepsilon(x)=\int_{\Omega} G_0(y,x) \omega_\varepsilon(y)\;dy-\mu_\varepsilon-b\pi.
\]
By \eqref{eq2-4}, we have  
\[
\psi_\varepsilon(x)=\tilde{\psi}_\varepsilon(x)+\lambda_0 \Big(\int_{\Omega_L} \rho(y)\omega_\varepsilon(y)\;dy\Big)\rho(x)+(b\pi-\eta(x)).
\]
Since $0<\rho<1$, $-\lambda_\beta<\beta<b\pi$ in $\Omega$, we have $\psi_\varepsilon(x)>\tilde{\psi}_\varepsilon(x)$ for any $x\in \Omega_L$. Then,
\[
\tilde{D}_\varepsilon:=\{x\in \Omega_L\;|\; \tilde{\psi}_\varepsilon(x)>0\}\subset \{x\in \Omega_L\;|\; \psi_\varepsilon(x)>0\}=\text{supp} (\omega_\varepsilon),
\]
where the last equality relation is obtained by \eqref{eq4-3}.  So we get $|\tilde{D}_\varepsilon|\leq |\text{supp} (\omega_\varepsilon)|=\varepsilon^2$.

Since $\mu_\varepsilon \geq -b\pi$, we have   $\tilde{\psi}_\varepsilon(x)=-(\mu_\varepsilon+b\pi)<0$ for $x\in \partial \Omega$,  and as  $|x|\to +\infty$, $\tilde{\psi}_\varepsilon(x)\to-(\mu_\varepsilon+b\pi)<0$.  Thus, $(\tilde{\psi}_\varepsilon)_+\in H_0^1(\Omega)$. Testing the equation $-\Delta \tilde{\psi}_\varepsilon=\omega_\varepsilon$ in $\Omega$ by   $(\tilde{\psi}_\varepsilon)_+$, we obtain that 
\[
\int_{\Omega} |\nabla (\tilde{\psi}_\varepsilon)_+|^2 dx =\int_{\Omega_L} \omega_\varepsilon(\tilde{\psi}_\varepsilon)_+dx.
\]
By the Holder inequality and the Sobolev imbedding $W^{1,1}(\Omega_L) \subset L^2(\Omega_L)$, 
\[
\begin{split}
\int_{\Omega_L} \omega_\varepsilon(\tilde{\psi}_\varepsilon)_+dx=& \frac{1}{\varepsilon^2} \int_{\tilde{D}_\varepsilon} (\tilde{\psi}_\varepsilon)_+dx\leq  \frac{|\tilde{D}_\varepsilon|^{\frac{1}{2}}}{\varepsilon^2} \left( \int_{\tilde{D}_\varepsilon} (\tilde{\psi}_\varepsilon)_+^2dx\right)^{\frac{1}{2}} 
\\
\leq & \frac{C|\tilde{D}_\varepsilon|^{\frac{1}{2}}}{\varepsilon^2}  \int_{\Omega_L} |\nabla (\tilde{\psi}_\varepsilon)_+|dx= \frac{C|\tilde{D}_\varepsilon|^{\frac{1}{2}}}{\varepsilon^2}  \int_{\tilde{D}_\varepsilon} |\nabla (\tilde{\psi}_\varepsilon)_+|dx
\\
\leq& \frac{C|\tilde{D}_\varepsilon|}{\varepsilon^2} \left( \int_{\tilde{D}_\varepsilon} |\nabla(\tilde{\psi}_\varepsilon)_+|^2dx\right)^{\frac{1}{2}} 
\\
\leq& C\left( \int_{\Omega_L} |\nabla (\tilde{\psi}_\varepsilon)_+|^2 dx \right)^{\frac{1}{2}}.
\end{split}
\]
So we get 
\[
\int_{\Omega_L} |\nabla (\tilde{\psi}_\varepsilon)_+|^2 dx\leq \int_{\Omega} |\nabla (\tilde{\psi}_\varepsilon)_+|^2 dx =\int_{\Omega_L} \omega_\varepsilon(\tilde{\psi}_\varepsilon)_+dx \leq C\left( \int_{\Omega_L} |\nabla (\tilde{\psi}_\varepsilon)_+|^2 dx \right)^{\frac{1}{2}},
\]
which gives $\int_{\Omega_L} |\nabla (\tilde{\psi}_\varepsilon)_+|^2 dx \leq C$.  As a result,  we prove that 
\[
\int_{\Omega_L} \omega_\varepsilon(\tilde{\psi}_\varepsilon)_+dx\leq C.
\]

Note that  
\[
\mathcal{G} \omega_\varepsilon (x)=\tilde{\psi}_\varepsilon(x)+\lambda_0 \Big(\int_{\Omega_L} \rho(y)\omega_\varepsilon(y)\;dy\Big)\rho(x)+\mu_\varepsilon+b\pi.
\]
So we have 
\[
\begin{split}
2\mathcal{E} (\omega_\varepsilon)=&\int_{\Omega_L} \omega_\varepsilon \mathcal{G} \omega_\varepsilon dx-2\int_{\Omega_L} \eta \omega_\varepsilon dx
\\
=&\int_{\Omega_L} \omega_\varepsilon \tilde{\psi}_\varepsilon dx+\lambda_0  \Big(\int_{\Omega_L} \rho \omega_\varepsilon\;dx\Big)^2-2\int_{\Omega_L} \eta \omega_\varepsilon dx+(\mu_\varepsilon+b\pi)\int_{\Omega_L} \omega_\varepsilon dx
\\
\leq& \int_{\Omega_L} \omega_\varepsilon (\tilde{\psi}_\varepsilon)_+ dx +\mu_\varepsilon +C
\\
\leq& \mu_\varepsilon +C.
\end{split}
\]
We complete the proof. 
\end{proof}

We begin to estimate the diameter of the support set of $\omega_\varepsilon$. To this purpose,  we first introduce a useful lemma. 

\begin{lemma}[Lemma 2.8]\label{lem4-4} Let $D\subset \Omega_L$, $0<\varepsilon<1$, $\sigma\geq 0$, and  let non-negative $\omega\in L^1(\Omega_L)$, $\int_{\Omega_L} \omega\; dx=1$ and $\|\omega\|_{L^p(\Omega_L)}\leq C_1\varepsilon^{-2(1-\frac{1}{p})}$ for some $1<p\leq +\infty$ and $C_1>0$.  Suppose for any $x\in D$, there holds  
\[
(1-\sigma)\ln \frac{1}{\varepsilon}\leq \int_{\Omega_L} \ln \frac{1}{|y-x|}\omega(y) dy+C_2,
\]
where $C_2$ is a positive constant. Then there exists some constant $R>1$ such that 
\[
\text{diam} (D)\leq R\varepsilon^{1-2\sigma}. 
\]
The constant $R$ may depend on $C_1$, $C_2$, but not on $\sigma, \varepsilon$.
\end{lemma}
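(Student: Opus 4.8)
The plan is to control the distance between any two points of $D$ by comparing the logarithmic potentials there. Write $V(x):=\int_{\Omega_L}\ln\frac{1}{|y-x|}\,\omega(y)\,dy$, so the hypothesis reads $V(x)\ge(1-\sigma)\ln\frac1\varepsilon-C_2$ for every $x\in D$. I fix $x_1,x_2\in D$, set $T:=|x_1-x_2|$, and add the two inequalities to obtain
\[
2(1-\sigma)\ln\tfrac1\varepsilon-2C_2\le V(x_1)+V(x_2)=\int_{\Omega_L}\ln\frac{1}{|y-x_1|\,|y-x_2|}\,\omega(y)\,dy.
\]
The goal is to show the right-hand side cannot exceed $\ln\frac2T+\ln\frac1\varepsilon+C$; rearranging then forces $T\le R\,\varepsilon^{1-2\sigma}$, and since $x_1,x_2$ are arbitrary this yields $\text{diam}(D)\le R\varepsilon^{1-2\sigma}$.

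For the geometric step I note that for each $y$ at least one of $|y-x_1|,|y-x_2|$ is $\ge T/2$ (otherwise the triangle inequality contradicts $|x_1-x_2|=T$), so with $d(y):=\text{dist}(y,\{x_1,x_2\})=\min(|y-x_1|,|y-x_2|)$ one has $|y-x_1|\,|y-x_2|\ge\frac T2\,d(y)$ and hence
\[
\int_{\Omega_L}\ln\frac{1}{|y-x_1|\,|y-x_2|}\,\omega\,dy\le\ln\frac2T+\int_{\Omega_L}\ln\frac{1}{d(y)}\,\omega(y)\,dy.
\]
The crux is then the \emph{sharp} concentration estimate $\int_{\Omega_L}\ln\frac{1}{d(y)}\,\omega\,dy\le\ln\frac1\varepsilon+C$, with the coefficient of $\ln\frac1\varepsilon$ equal to exactly one. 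Since $\omega\ge0$ I replace the integrand by its positive part and apply the layer-cake formula, reducing matters to the distribution $g(t):=\int_{\{d<t\}}\omega\,dy=\int_0^\infty g(e^{-s})\,ds$. The superlevel set $\{d<t\}$ lies in $B_t(x_1)\cup B_t(x_2)$, so $|\{d<t\}|\le 2\pi t^2$; combining $\int_{\Omega_L}\omega=1$ with the assumed $L^p$ bound through H\"older gives the two-sided control $g(t)\le\min\!\big(1,\;C_1(2\pi)^{1-1/p}(t/\varepsilon)^{2(1-1/p)}\big)$. Splitting the $s$-integral at the crossover $s^\ast=\ln\frac1\varepsilon+O(1)$ of the two caps produces precisely $\ln\frac1\varepsilon+C$, with $C$ depending only on $C_1$ and $p$.

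Combining the three displays gives $2(1-\sigma)\ln\frac1\varepsilon-2C_2\le\ln\frac2T+\ln\frac1\varepsilon+C$, that is $(1-2\sigma)\ln\frac1\varepsilon\le\ln\frac2T+C'$, whence $T\le 2e^{C'}\varepsilon^{1-2\sigma}=:R\,\varepsilon^{1-2\sigma}$ with $R$ depending only on $C_1,C_2,p$ and independent of $\sigma,\varepsilon$, as required. The \textbf{main obstacle} is the sharp concentration bound with coefficient exactly one: a crude H\"older estimate on a fixed ball produces a constant strictly larger than one in front of $\ln\frac1\varepsilon$, which would ruin the final inequality, so one must genuinely use both constraints at once — the mass normalization and the $L^p$ bound — via the bathtub/layer-cake mechanism. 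It is worth keeping the endpoint $p=\infty$ (the case actually used, $\omega\le\varepsilon^{-2}$) in view, where $1-1/p=1$ and the crossover computation is cleanest.
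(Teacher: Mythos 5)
The paper does not actually prove this lemma: it is imported verbatim (the bracketed ``Lemma 2.8'' refers to an external source in the Turkington tradition), so there is no in-paper argument to compare against. Your proposal is a correct and complete self-contained proof, and it is the standard argument for this type of diameter estimate: pairing two points of $D$, using that $\max(|y-x_1|,|y-x_2|)\geq T/2$ to split off the $\ln\frac{2}{T}$ term, and then bounding $\int_{\Omega_L}\ln\frac{1}{d(y)}\,\omega\,dy$ by $\ln\frac{1}{\varepsilon}+C$ via the layer-cake formula together with the two caps $g(t)\leq 1$ and $g(t)\leq C_1(2\pi)^{1-1/p}(t/\varepsilon)^{2(1-1/p)}$. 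You correctly identify the one genuinely delicate point, namely that the coefficient of $\ln\frac{1}{\varepsilon}$ in the concentration bound must be exactly $1$, and your crossover computation at $s^{\ast}=\ln\frac{1}{\varepsilon}+O(1)$ achieves this. The only cosmetic caveat is that your constant $R$ also depends on $p$ (it degenerates as $p\to 1^{+}$); since $p$ is fixed data alongside $C_1$ this is harmless, and in the application $p=\infty$.
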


We have the following estimate of  diameter of the support set of $\omega_\varepsilon$.

\begin{lemma}\label{lem4-5} There exists  $R>1$, independent of $\varepsilon$, such that 
\[
\text{diam} (\text{supp}(\omega_\varepsilon))\leq R \varepsilon. 
\]
\end{lemma}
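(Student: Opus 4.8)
The plan is to deduce the estimate directly from the technical Lemma~\ref{lem4-4}, applied with $D=\text{supp}(\omega_\varepsilon)$ and $\sigma=0$. First I would check the structural hypotheses: by \eqref{eq4-3} the maximizer has the patch form $\omega_\varepsilon=\varepsilon^{-2}\mathbf{1}_{\{\psi_\varepsilon>0\}}$, so $\omega_\varepsilon\ge 0$, $\int_{\Omega_L}\omega_\varepsilon\,dx=1$, and since $|\text{supp}(\omega_\varepsilon)|=\varepsilon^2$ we get $\|\omega_\varepsilon\|_{L^p(\Omega_L)}=\varepsilon^{-2(1-1/p)}$ for every $p$; thus the required $L^p$ bound holds with $C_1=1$. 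The whole problem therefore reduces to producing the one-sided logarithmic bound demanded by Lemma~\ref{lem4-4}, namely a constant $C_2$ independent of $\varepsilon$ with
\[
\ln\frac{1}{\varepsilon}\le \int_{\Omega_L}\ln\frac{1}{|y-x|}\,\omega_\varepsilon(y)\,dy+C_2\qquad\text{for every }x\in \text{supp}(\omega_\varepsilon).
\]
Once this is established, Lemma~\ref{lem4-4} yields $\text{diam}(\text{supp}(\omega_\varepsilon))\le R\varepsilon^{1-2\cdot 0}=R\varepsilon$.

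To obtain that bound I would use the defining inequality on the support together with the lower bound on $\mu_\varepsilon$ already proved. On $\text{supp}(\omega_\varepsilon)$ we have $\psi_\varepsilon(x)\ge 0$, i.e. $\mathcal{G}\omega_\varepsilon(x)\ge \eta(x)+\mu_\varepsilon$. Writing $\mathcal{G}\omega_\varepsilon(x)=\int_{\Omega_L}G(y,x)\omega_\varepsilon(y)\,dy$ and decomposing $G(y,x)=\frac{1}{2\pi}\ln\frac{1}{|y-x|}-H(y,x)$ gives
\[
\frac{1}{2\pi}\int_{\Omega_L}\ln\frac{1}{|y-x|}\,\omega_\varepsilon(y)\,dy\ge \eta(x)+\mu_\varepsilon+\int_{\Omega_L}H(y,x)\,\omega_\varepsilon(y)\,dy.
\]
Since $\eta\ge -\lambda_{b,\Gamma}$ on $\Omega$ and $\mu_\varepsilon\ge \frac{1}{2\pi}\ln\frac{1}{\varepsilon}-C$ by Lemma~\ref{lem4-3}, the desired inequality follows provided I can bound $\int_{\Omega_L}H(y,x)\omega_\varepsilon(y)\,dy$ from below by a constant independent of $\varepsilon$ and of $x\in\Omega_L$.

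This last lower bound is the only genuine obstacle, because $H(x,x)\to+\infty$ near $\partial\Omega$ by Proposition~\ref{pro2-8}, so one must rule out large negative off-diagonal values of $H$. I would argue as follows. By \eqref{eq2-4}, $H(y,x)=H_0(y,x)-\lambda_0\rho(x)\rho(y)$, and since $0<\rho\le 1$ and $\lambda_0>0$ the correction satisfies $-\lambda_0\rho(x)\rho(y)\ge -\lambda_0$. For $H_0$ I would invoke the monotonicity of the Dirichlet Green's function with respect to the domain: because $\Omega\subset S$, we have $G_0(y,x)=G_\Omega(y,x)\le G_S(y,x)$ for all $x,y\in\Omega$, hence $H_0(y,x)\ge H_S(y,x)$. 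Finally, from the explicit formula \eqref{eq1-18} one reads off that on the bounded set $\overline{\Omega}_L\times\overline{\Omega}_L$ the regular part $H_S(y,x)=-\frac{1}{2\pi}\ln|y-x|-G_S(y,x)$ is bounded below: the combination $-\frac{1}{2\pi}\ln|y-x|+\frac{1}{4\pi}\ln\bigl(\cosh(y_1-x_1)-\cos(y_2-x_2)\bigr)$ is $O(1)$ since $\cosh(y_1-x_1)-\cos(y_2-x_2)=2\sinh^2\frac{y_1-x_1}{2}+2\sin^2\frac{y_2-x_2}{2}$ is comparable to $|y-x|^2$ on bounded sets, while the remaining term $-\frac{1}{4\pi}\ln\bigl(\cosh(y_1-x_1)-\cos(y_2+x_2)\bigr)$ is bounded below because its argument is $\le \cosh(2L)+1$. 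Combining these three facts gives $H(y,x)\ge -C_L$ on $\overline{\Omega}_L\times\overline{\Omega}_L$, whence $\int_{\Omega_L}H(y,x)\omega_\varepsilon(y)\,dy\ge -C_L$. This verifies the hypothesis of Lemma~\ref{lem4-4} and completes the proof.
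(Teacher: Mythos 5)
Your proposal is correct and follows essentially the same route as the paper: use $\psi_\varepsilon>0$ on the support together with $\eta\ge-\lambda_{b,\Gamma}$ and the lower bound $\mu_\varepsilon\ge\frac{1}{2\pi}\ln\frac{1}{\varepsilon}-C$ from Lemma~\ref{lem4-3}, bound $G(y,x)\le\frac{1}{2\pi}\ln\frac{1}{|y-x|}+C$ on $\Omega_L\times\Omega_L$ via \eqref{eq2-4}, and invoke Lemma~\ref{lem4-4} with $\sigma=0$. The only difference is that you fill in the justification of the uniform lower bound on $H(y,x)$ (domain monotonicity $G_0\le G_S$ plus the explicit formula \eqref{eq1-18}), which the paper leaves implicit.
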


\begin{proof}
For each $x\in supp(\omega_\varepsilon)$, we have $\psi_\varepsilon(x)>0$, namely, 
\[
\mathcal{G} \omega_\varepsilon (x)-\eta(x)-\mu_\varepsilon>0.
\]
Since $-\lambda_\beta<\eta<b\pi$, by Lemma~\ref{lem4-3} we have 
\[
\mathcal{G} \omega_\varepsilon (x)>\eta(x)+\mu_\varepsilon\geq \frac{1}{2\pi}\ln \frac{1}{\varepsilon}-C. 
\]

On the other hand, for $x\in supp(\omega_\varepsilon)$ we have 
\[
\mathcal{G} \omega_\varepsilon (x)\leq \frac{1}{2\pi} \int_{\Omega_L} \ln \frac{1}{|y-x|} \omega_\varepsilon(y) \; dy +C,
\]
where we have used $G(y,x)\leq \frac{1}{2\pi} \ln \frac{1}{|y-x|}+C$ in $\Omega_L\times \Omega_L$ in view of  \eqref{eq2-4}.  Then the assertion follows from Lemma~\ref{lem4-4}.
\end{proof}

We now turn to establish the limiting  location of $supp(\omega_\varepsilon)$.  Define the center  by 
\[
x_\varepsilon =\int_{\Omega_L} x\omega_\varepsilon(x)\;  dx.
\]
We may assume that up to a subsequence, 
\[
x_\varepsilon\to x^*\in \bar{\Omega}_L\;\;\;\text{as} \;\;\varepsilon\to 0_+. 
\]

\begin{lemma}\label{lem4-6} It holds
\[
\mathcal{H} (x^*)=\min_{x\in \Omega} \mathcal{H}(x). 
\]
As a consequence, $dist (x^*, \partial \Omega_L)>0$. 
\end{lemma}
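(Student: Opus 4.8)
The plan is to pin down $x^*$ by a two-sided asymptotic estimate of the maximal energy $\mathcal{E}(\omega_\varepsilon)$, matching the leading logarithmic term and isolating the $O(1)$ contribution of $\mathcal{H}$. Throughout I use the splitting $G(y,x)=\frac{1}{2\pi}\ln\frac{1}{|y-x|}-H(y,x)$, so that
\[
\mathcal{E}(\omega_\varepsilon)=\frac{1}{4\pi}\int_{\Omega_L}\int_{\Omega_L}\ln\frac{1}{|y-x|}\omega_\varepsilon(x)\omega_\varepsilon(y)\,dxdy-\Big[\frac12\int_{\Omega_L}\int_{\Omega_L}H(y,x)\omega_\varepsilon\omega_\varepsilon\,dxdy+\int_{\Omega_L}\eta\,\omega_\varepsilon\,dx\Big],
\]
and I note that $\frac12 H(x,x)+\eta(x)=\frac12\mathcal{H}(x)$. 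For the lower bound, fix $x_0\in\mathcal{Q}$ (nonempty by Theorem~\ref{th3-1}$(i)$, and contained in $\Omega_{L/2}$ by the choice of $L$) and take the comparison patch $\tilde\omega_\varepsilon=\varepsilon^{-2}\mathbf{1}_{B_{\varepsilon/\sqrt\pi}(x_0)}\in\mathcal{A}_\varepsilon$. A rescaling of the ball self-interaction produces a universal constant $C_0$ with $\frac{1}{4\pi}\int\int\ln\frac{1}{|y-x|}\tilde\omega_\varepsilon\tilde\omega_\varepsilon=\frac{1}{4\pi}\ln\frac1\varepsilon+C_0+o(1)$, while continuity of $H$ and $\eta$ at $x_0$ turns the bracketed term into $\frac12\mathcal{H}(x_0)+o(1)$. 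Hence, by maximality of $\omega_\varepsilon$,
\[
\mathcal{E}(\omega_\varepsilon)\ge\mathcal{E}(\tilde\omega_\varepsilon)=\frac{1}{4\pi}\ln\frac1\varepsilon+C_0-\frac12\min_{\bar\Omega}\mathcal{H}+o(1).
\]

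For the matching upper bound I estimate the two pieces separately. The singular part is controlled by the Riesz rearrangement inequality: since $\Omega_L$ is bounded and $z\mapsto(\ln\frac{\mathrm{diam}\,\Omega_L}{|z|})_+$ is a nonnegative radially decreasing kernel, the self-interaction of any $\omega\in\mathcal{A}_\varepsilon$ is maximized by the symmetric rearrangement $\varepsilon^{-2}\mathbf{1}_{B_{\varepsilon/\sqrt\pi}}$, so $\frac{1}{4\pi}\int\int\ln\frac1{|y-x|}\omega_\varepsilon\omega_\varepsilon\le\frac{1}{4\pi}\ln\frac1\varepsilon+C_0+o(1)$ with the \emph{same} $C_0$. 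For the regular part I invoke Lemma~\ref{lem4-5}, which confines $\mathrm{supp}(\omega_\varepsilon)$ to a ball of radius $R\varepsilon$ about $x_\varepsilon$; together with $\int\omega_\varepsilon=1$, the convergence $x_\varepsilon\to x^*$, and continuity of $H,\eta$ (valid once $x^*\in\Omega$), this gives $\frac12\int\int H\omega_\varepsilon\omega_\varepsilon+\int\eta\,\omega_\varepsilon=\frac12\mathcal{H}(x^*)+o(1)$. Combining,
\[
\mathcal{E}(\omega_\varepsilon)\le\frac{1}{4\pi}\ln\frac1\varepsilon+C_0-\frac12\mathcal{H}(x^*)+o(1).
\]

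Comparing the two bounds and letting $\varepsilon\to0$ gives $\mathcal{H}(x^*)\le\min_{\bar\Omega}\mathcal{H}$, whence $\mathcal{H}(x^*)=\min_{\bar\Omega}\mathcal{H}$ and $x^*\in\mathcal{Q}$. The one gap is the tacit assumption $x^*\in\Omega$ used in the upper bound; I close it by excluding the boundary. If $d_\varepsilon:=\mathrm{dist}(x_\varepsilon,\partial\Omega)\to0$, then by Proposition~\ref{pro2-8} (and an off-diagonal analogue controlling $H(y,x)$ for $y,x$ simultaneously near $\partial\Omega$) the regular term $\frac12\int\int H\omega_\varepsilon\omega_\varepsilon$ grows like $\frac{1}{4\pi}\ln\frac{1}{d_\varepsilon}\to+\infty$, forcing $\mathcal{E}(\omega_\varepsilon)$ below the lower bound established above, a contradiction. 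Hence $d_\varepsilon$ stays bounded away from $0$ and $x^*\in\Omega$, so the upper bound is legitimate. Finally, since $x^*\in\mathcal{Q}$, the choice $\mathcal{Q}\subset\Omega_{L/2}$ keeps $x^*$ away from the lateral walls $\{x_1=\pm L\}$, and Theorem~\ref{th3-1}$(i)$ gives $\mathrm{dist}(\mathcal{Q},\partial\Omega)>0$; together these yield $\mathrm{dist}(x^*,\partial\Omega_L)>0$.

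The main obstacle is the sharp energy upper bound: one must extract the exact same constant $C_0$ as in the test-function computation, which is precisely why the Riesz rearrangement inequality (rather than the crude bound of Lemma~\ref{lem4-2}) is needed, and one must simultaneously rule out concentration at $\partial\Omega$, where the regular part $H$ blows up.
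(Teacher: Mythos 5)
Your argument is correct in outline but takes a genuinely different route from the paper. The paper's proof is a pure translation argument: it compares $\omega_\varepsilon$ with its own translate $\tilde\omega_\varepsilon(\cdot)=\omega_\varepsilon(x_\varepsilon-x_0+\cdot)$ for an arbitrary $x_0\in\Omega_L$ (admissible by Lemma~\ref{lem4-5}), observes that the logarithmic self-interaction is translation-invariant and therefore cancels \emph{exactly} between the two competitors, and is left with the inequality
$\frac12\iint H\,\omega_\varepsilon\omega_\varepsilon+\int\eta\,\omega_\varepsilon\le\frac12\iint H\,\tilde\omega_\varepsilon\tilde\omega_\varepsilon+\int\eta\,\tilde\omega_\varepsilon$, which passes to $\mathcal{H}(x^*)\le\mathcal{H}(x_0)$ in the limit. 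This sidesteps entirely the two hardest points of your scheme: there is no need to identify the sharp constant $C_0$ on both sides of the energy estimate, and no need for the Riesz rearrangement plus bathtub step at this stage (the paper defers that machinery to Lemma~\ref{lem4-9}). Your approach buys a quantitative two-sided expansion of $\mathcal{E}(\omega_\varepsilon)$ as a by-product, but at the cost of the boundary-exclusion step, which rests on an off-diagonal lower bound for $H(y,x)$ near $\partial\Omega$ that you correctly flag as unproved in the paper (Proposition~\ref{pro2-8} only gives the diagonal behaviour); to be fair, the paper's own limit passage also tacitly uses a mild lower-semicontinuity property of $H$ near the boundary, but it needs far less than the sharp blow-up rate your contradiction argument requires. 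Your derivation of the consequence $\mathrm{dist}(x^*,\partial\Omega_L)>0$ from $x^*\in\mathcal{Q}\subset\Omega_{L/2}$ and Theorem~\ref{th3-1}$(i)$ coincides with the paper's.
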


\begin{proof} Fix $x_0\in \Omega_L$. Then,   $dist (x_0,\partial \Omega_L)>0$.  We set $\tilde{\omega}_\varepsilon(\cdot) =\omega_\varepsilon (x_\varepsilon-x_0+\cdot)$. By Lemma~\ref{lem4-5}, we have 
$supp (\omega_\varepsilon)\subset B_{R\varepsilon}(x_\varepsilon)$. Thus, $supp (\tilde{\omega}_\varepsilon)\subset B_{R\varepsilon}(x_0)$. So for small $\varepsilon>0$,  $supp (\tilde{\omega}_\varepsilon) \subset B_{R\varepsilon}(x_0) \subset \Omega_L$. Then it is easy to see that $\tilde{\omega}_\varepsilon \in \mathcal{A}_\varepsilon$.  Since $\omega_\varepsilon$ is a maximizer, we have $\mathcal{E}(\omega_\varepsilon)\geq \mathcal{E}(\tilde{\omega}_\varepsilon)$. It is clear that 
\[
\frac{1}{2\pi} \int_{\Omega_L}\int_{\Omega_L}  \ln \frac{1}{|y-x|} \omega_\varepsilon(x)\omega_\varepsilon(y) \; dxdy=\frac{1}{2\pi} \int_{\Omega_L}\int_{\Omega_L}  \ln \frac{1}{|y-x|} \tilde{\omega}_\varepsilon(x)\tilde{\omega}_\varepsilon(y) \; dxdy. 
\]
So we obtain that 
\[
\begin{split}
\frac{1}{2}\int_{\Omega_L}\int_{\Omega_L} H(y,x) &\omega_\varepsilon(x)\omega_\varepsilon(y) \; dxdy+\int_{\Omega_L} \eta(x) \omega_\varepsilon(x)\; dx
\\
\leq &\frac{1}{2}\int_{\Omega_L}\int_{\Omega_L} H(y,x) \tilde{\omega}_\varepsilon(x)\tilde{\omega}_\varepsilon(y) \; dxdy+\int_{\Omega_L} \eta(x) \tilde{\omega}_\varepsilon(x)\; dx.
\end{split}
\]
By passing $\varepsilon\to 0_+$, we get $\mathcal{H}(x^*)\leq \mathcal{H}(x_0)$. Thus, $\mathcal{H} (x^*)=\min_{x\in \Omega_L} \mathcal{H}(x)$.  By Theorem~\ref{th3-1},  we have $\mathcal{H} (x^*)=\min_{x\in \Omega} \mathcal{H}(x)$ and $|x^*_1|\leq L/2$. So,  $dist (x^*, \partial \Omega_L)>0$. 
\end{proof}

By Lemma~\ref{lem4-5} and Lemma~\ref{lem4-6}, we have 

\begin{corollary}\label{cor4-7} For $\varepsilon>0$ small enough,  there holds $dist( supp(\omega_\varepsilon), \partial \Omega_L)>0$. That is, the free boundary belonging to $\omega_\varepsilon$ does not touch $\partial \Omega_L$.  Following the same arguments as in \cite[Theorem~5.2]{Tur83}, we have that for all $\phi\in C_0^\infty(\Omega_L)$, 
\[
\int_{\Omega_L} \omega_\varepsilon \nabla \phi \cdot \nabla^\perp (\mathcal{G}\omega_\varepsilon-\eta)\;dx=0.
\]
Since $supp(\omega_\varepsilon) \subset \Omega_L$ and $dist( supp(\omega_\varepsilon), \partial \Omega_L)>0$, it is easy to see that $\omega_\varepsilon$ is a weak solution of  \eqref{eq1-8}-\eqref{eq1-9}. 
\end{corollary}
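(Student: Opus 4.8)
The plan is to establish the two assertions of the corollary in turn---first that the vorticity set stays a fixed distance from the artificial lateral boundary $\partial\Omega_L$, then that $\omega_\varepsilon$ satisfies the weak vorticity equation on $\Omega_L$---and finally to upgrade this to a genuine weak solution of \eqref{eq1-8}--\eqref{eq1-9} on all of $\Omega$.

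First I would prove the separation estimate $\text{dist}(\text{supp}(\omega_\varepsilon),\partial\Omega_L)>0$, which is immediate from the two preceding lemmas. Lemma~\ref{lem4-5} gives $\text{supp}(\omega_\varepsilon)\subset B_{R\varepsilon}(x_\varepsilon)$ with $R$ independent of $\varepsilon$, while Lemma~\ref{lem4-6} gives $x_\varepsilon\to x^*$ with $\text{dist}(x^*,\partial\Omega_L)>0$ (indeed $|x_1^*|\leq L/2$). Hence for $\varepsilon$ small we have $\text{dist}(x_\varepsilon,\partial\Omega_L)\geq\frac12\text{dist}(x^*,\partial\Omega_L)$, and by the triangle inequality $\text{dist}(\text{supp}(\omega_\varepsilon),\partial\Omega_L)\geq\frac12\text{dist}(x^*,\partial\Omega_L)-R\varepsilon>0$.

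For the weak equation I would run the measure-preserving rearrangement (domain-variation) argument of Turkington. Fix $\phi\in C_0^\infty(\Omega_L)$ and let $\Phi_t$ be the flow generated by the divergence-free field $\nabla^\perp\phi$, i.e. $\partial_t\Phi_t=\nabla^\perp\phi(\Phi_t)$, $\Phi_0=\text{id}$; since $\nabla^\perp\phi$ is compactly supported in $\Omega_L$, each $\Phi_t$ is a measure-preserving diffeomorphism of $\Omega_L$ equal to the identity near $\partial\Omega_L$. Setting $\omega_\varepsilon^t:=\omega_\varepsilon\circ\Phi_t^{-1}$, the constraints $0\le\omega\le\varepsilon^{-2}$, $\int\omega=1$ and $\text{supp}(\omega)\subset\Omega_L$ are all preserved, so $\omega_\varepsilon^t\in\mathcal{A}_\varepsilon$ for every $t$. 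Maximality of $\omega_\varepsilon$ then forces $\frac{d}{dt}\big|_{t=0}\mathcal{E}(\omega_\varepsilon^t)=0$, the variation being two-sided. To compute this derivative I would change variables $x=\Phi_t(\xi)$, $y=\Phi_t(\zeta)$ in $\mathcal{E}(\omega_\varepsilon^t)=\frac12\iint G(y,x)\omega_\varepsilon^t(x)\omega_\varepsilon^t(y)\,dxdy-\int\eta\,\omega_\varepsilon^t$, using the unit Jacobian and $\omega_\varepsilon^t\circ\Phi_t=\omega_\varepsilon$, and then differentiate under the integral with $\partial_t\Phi_t|_{t=0}=\nabla^\perp\phi$. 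The symmetry $G(y,x)=G(x,y)$ collapses the two kernel terms, and recognizing $\int\nabla_\xi G(\zeta,\xi)\omega_\varepsilon(\zeta)\,d\zeta=\nabla\mathcal{G}\omega_\varepsilon(\xi)$ yields
\[
0=\frac{d}{dt}\Big|_{t=0}\mathcal{E}(\omega_\varepsilon^t)=\int_{\Omega_L}\omega_\varepsilon\,\nabla^\perp\phi\cdot\nabla(\mathcal{G}\omega_\varepsilon-\eta)\,dx.
\]
Since $a\cdot b^\perp=-a^\perp\cdot b$, this is exactly the stated identity $\int_{\Omega_L}\omega_\varepsilon\nabla\phi\cdot\nabla^\perp(\mathcal{G}\omega_\varepsilon-\eta)\,dx=0$.

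Finally I would pass from $\Omega_L$ to $\Omega$. By the separation estimate $\text{supp}(\omega_\varepsilon)$ is compactly contained in $\Omega_L$, so for an arbitrary $\phi\in C_0^\infty(\Omega)$ one may multiply by a cutoff $\chi\in C_0^\infty(\Omega_L)$ with $\chi\equiv1$ on a neighborhood of $\text{supp}(\omega_\varepsilon)$; since $\omega_\varepsilon$ vanishes off this neighborhood the integral in \eqref{eq1-10} is unchanged, and applying the identity just derived to $\chi\phi$ shows \eqref{eq1-10} holds for all $\phi\in C_0^\infty(\Omega)$. Together with $\omega_\varepsilon\in L^\infty(\Omega)\subset L^2(\Omega)$ and compact support, this verifies Definition~\ref{de2-1} and condition \eqref{eq1-9}, so $\omega_\varepsilon$ is a weak solution of \eqref{eq1-8}--\eqref{eq1-9}. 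The main obstacle is the rigorous justification of differentiating $\mathcal{E}$ along the flow: $\omega_\varepsilon$ is only a bounded characteristic function and the kernel $\nabla G$ carries a $|y-x|^{-1}$ singularity on the diagonal, so one must check that this singularity is integrable in the plane and that differentiation under the double integral is legitimate, using the $W^{2,p}(\Omega_L)$-regularity of $\mathcal{G}\omega_\varepsilon$ from the $L^p$ estimate. This is precisely the content borrowed from \cite[Theorem~5.2]{Tur83}; by comparison, the separation estimate and the cutoff extension are routine.
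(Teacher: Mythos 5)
Your proposal is correct and takes essentially the same route as the paper: the separation estimate follows from Lemma~\ref{lem4-5} and Lemma~\ref{lem4-6}, the weak equation on $\Omega_L$ comes from Turkington's measure-preserving domain-variation argument (which the paper simply cites as \cite[Theorem~5.2]{Tur83}), and the passage from test functions in $C_0^\infty(\Omega_L)$ to $C_0^\infty(\Omega)$ uses the compact containment of $\mathrm{supp}(\omega_\varepsilon)$ in $\Omega_L$. The only difference is that you spell out the details the paper delegates to the citation and to ``it is easy to see.''
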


Moreover, we have
\begin{proposition}\label{pro4-8} For all sufficiently small $\varepsilon>0$, it holds 
\[
\omega_\varepsilon=\frac{1}{\varepsilon^2} \mathbf{1}_{\{
\psi_\varepsilon>0
\}}\;\;\; a.e. \;\;\text{in}\;\; \Omega.
\]
\end{proposition}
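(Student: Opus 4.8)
The plan is to exploit the fact that $\omega_\varepsilon$ is, by the very definition of $\mathcal{A}_\varepsilon$, supported in $\Omega_L$, so that $\omega_\varepsilon\equiv 0$ in $\Omega\setminus\Omega_L$ while the identity of Lemma~\ref{lem4-1} already holds a.e.\ in $\Omega_L$. Since $\partial\Omega_L$ has zero Lebesgue measure, it therefore suffices to show that $\frac{1}{\varepsilon^2}\mathbf{1}_{\{\psi_\varepsilon>0\}}=0$ throughout $\Omega\setminus\bar\Omega_L$, equivalently that the stream function obeys $\psi_\varepsilon\le 0$ on $\Omega\setminus\bar\Omega_L$. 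I first note that $\psi_\varepsilon=\mathcal{G}\omega_\varepsilon-\eta-\mu_\varepsilon$ is continuous (as $\mathcal{G}\omega_\varepsilon\in W^{2,p}(\Omega_L)$ for $p>2$, hence continuous, and $\eta\in C^2(\bar\Omega)$), so this sign condition can be verified pointwise.

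Next I would record that $\psi_\varepsilon$ is harmonic in $\Omega\setminus\bar\Omega_L$, since $-\Delta\psi_\varepsilon=\omega_\varepsilon\equiv 0$ there, and then control its boundary values. On the horizontal pieces $\partial S\cap\{|x_1|>L\}$ the boundary conditions give $\mathcal{G}\omega_\varepsilon=0$ and $\eta\in\{0,b\pi\}$, so $\psi_\varepsilon\in\{-\mu_\varepsilon,\,-b\pi-\mu_\varepsilon\}$; by Lemma~\ref{lem4-3} one has $\mu_\varepsilon\ge\frac{1}{2\pi}\ln\frac{1}{\varepsilon}-C>0$ for small $\varepsilon$, so $\psi_\varepsilon<0$ on these pieces. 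On the vertical segments $\{x_1=\pm L\}\cap\Omega$ I would invoke Corollary~\ref{cor4-7}: since $\text{dist}(\text{supp}(\omega_\varepsilon),\partial\Omega_L)=:d_0>0$, the one-sided neighborhood $\{x\in\Omega_L:\text{dist}(x,\partial\Omega_L)<d_0\}$ is disjoint from $\text{supp}(\omega_\varepsilon)$, so $\psi_\varepsilon\le 0$ a.e.\ there by the relation of Lemma~\ref{lem4-1}, hence everywhere there by continuity, and in particular $\psi_\varepsilon\le 0$ on $\{x_1=\pm L\}\cap\Omega$.

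To close the argument on the unbounded exterior I would use decay at infinity. Writing $\mathcal{G}\omega_\varepsilon(x)=\int_{\Omega_L}G(y,x)\omega_\varepsilon(y)\,dy$ and combining Lemma~\ref{lem2-4}, Lemma~\ref{lem2-6} and Lemma~\ref{lem2-11}, both $\mathcal{G}\omega_\varepsilon(x)$ and $\beta(x)$ tend to $0$ as $|x_1|\to\infty$, so $\psi_\varepsilon(x)\to-(bx_2+\mu_\varepsilon)<0$ uniformly for $x_2\in(0,\pi)$. Hence there is $M>L$ with $\psi_\varepsilon<0$ on $\{|x_1|\ge M\}\cap\Omega$, and on each bounded rectangle $\{L\le\pm x_1\le M\}\cap\Omega$ the ordinary maximum principle applies: $\psi_\varepsilon$ is harmonic with $\psi_\varepsilon\le 0$ on the entire (now finite) boundary, hence $\psi_\varepsilon\le 0$ inside. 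Gluing the two half-strips gives $\psi_\varepsilon\le 0$ on all of $\Omega\setminus\bar\Omega_L$, which is exactly the required fact; the identity then holds a.e.\ in $\Omega$.

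The boundary identifications and the maximum-principle bookkeeping are routine; the single step requiring care is the passage to infinity, where a maximum principle on a half-strip cannot simply be asserted but must be reduced to a bounded rectangle using the quantitative exponential decay of $\mathcal{G}\omega_\varepsilon$ and $\beta$ furnished by the Green's-function estimates of Section~2. I expect this reduction to be the main, though not severe, obstacle.
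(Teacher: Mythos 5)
Your proposal is correct and follows essentially the same route as the paper: reduce to showing $\psi_\varepsilon\le 0$ in $\Omega\setminus\Omega_L$, note that $\psi_\varepsilon$ is harmonic there, check the sign on $\{x_1=\pm L\}$ via the patch identity of Lemma~\ref{lem4-1} (together with Corollary~\ref{cor4-7}), on $\partial S$ and at infinity via $\mu_\varepsilon>0$ from Lemma~\ref{lem4-3}, and conclude by the maximum principle. Your extra care in reducing the maximum principle on the unbounded half-strips to bounded rectangles using the decay of $\mathcal{G}\omega_\varepsilon$ and $\eta$ is a welcome elaboration of a step the paper leaves implicit, but it is not a different argument.
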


\begin{proof} By \eqref{eq4-3}, we only need  to show that  $\psi_\varepsilon\leq 0$ a.e. in $\Omega \setminus \Omega_L$.  By \eqref{eq4-3} again, we have $\psi_\varepsilon (x)\leq 0$ for $x\in \Omega$ and $x_1=\pm L$.  By Lemma~\ref{lem4-3}, for $\varepsilon>0$ small enough we have $\psi_\varepsilon=-\eta-\mu_\varepsilon<0$ on $\partial S$ and  $\psi_\varepsilon\to-\eta-\mu_\varepsilon<0$ as $|x|\to \infty$.  Since $supp(\omega_\varepsilon) \subset \Omega_L$, by the definition of $\psi_\varepsilon$ we have 
\[
\Delta \psi_\varepsilon =0\quad \text{in} \;\;\Omega \setminus \Omega_L.
\]
The maximum principle implies that $ \psi_\varepsilon \leq 0$ in $\Omega \setminus \Omega_L$.
\end{proof}

\begin{lemma}\label{lem4-9} As $\varepsilon\to 0_+$, there holds 
\begin{equation}\label{eq4-5}
\mathcal{E} (\omega_\varepsilon)= \frac{1}{4\pi}\ln \frac{1}{\varepsilon}+O(1),
\end{equation}
\begin{equation}\label{eq4-6}
\mu_\varepsilon= \frac{1}{2\pi}\ln \frac{1}{\varepsilon}+O(1).
\end{equation}
\end{lemma}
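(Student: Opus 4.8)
The plan is to upgrade the one-sided bounds already established, namely the lower bound $\mathcal{E}(\omega_\varepsilon)\ge \frac{1}{4\pi}\ln\frac{1}{\varepsilon}-C$ of Lemma~\ref{lem4-2} and the lower bound $\mu_\varepsilon\ge \frac{1}{2\pi}\ln\frac{1}{\varepsilon}-C$ of Lemma~\ref{lem4-3}, into the two-sided estimates \eqref{eq4-5} and \eqref{eq4-6} by proving the matching \emph{upper} bounds. Since the two asymptotics share the same leading term, once $\mathcal{E}(\omega_\varepsilon)$ is controlled from above, the bound on $\mu_\varepsilon$ will follow from a short algebraic identity, so the real work is the upper bound on the energy.

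First I would prove $\mathcal{E}(\omega_\varepsilon)\le \frac{1}{4\pi}\ln\frac{1}{\varepsilon}+C$. Write $A:=\mathrm{supp}(\omega_\varepsilon)$, so that $\omega_\varepsilon=\varepsilon^{-2}\mathbf{1}_A$ with $|A|=\varepsilon^2$. By Lemma~\ref{lem4-5} and Lemma~\ref{lem4-6}, for all small $\varepsilon$ the set $A$ is contained in a fixed compact subset $K\subset\Omega$ bounded away from $\partial\Omega$; hence the regular part $H$ is bounded on $K\times K$, and the splitting $G(y,x)=\frac{1}{2\pi}\ln\frac{1}{|y-x|}-H(y,x)$ yields $G(y,x)\le \frac{1}{2\pi}\ln\frac{1}{|y-x|}+C$ there. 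Using $\int_{\Omega_L}\omega_\varepsilon\,dx=1$ and the boundedness of $\eta$ on $\bar{\Omega}_L$, the problem reduces to bounding $\frac{1}{4\pi}\int_{\Omega_L}\int_{\Omega_L}\ln\frac{1}{|y-x|}\,\omega_\varepsilon(x)\omega_\varepsilon(y)\,dx\,dy$ from above.

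The core step is a rearrangement estimate. For each fixed $x\in A$, the diameter bound of Lemma~\ref{lem4-5} ensures $|y-x|\le R\varepsilon<1$ on $A$, so $\ln\frac{1}{|y-x|}$ is positive and radially decreasing about $x$; the bathtub principle then gives $\int_A \ln\frac{1}{|y-x|}\,dy\le \int_{B_r(x)}\ln\frac{1}{|y-x|}\,dy$, where $B_r(x)$ is the disk centered at $x$ with the same area $\pi r^2=\varepsilon^2$. An explicit integration of the right-hand side produces $\varepsilon^2\big(\ln\frac{1}{\varepsilon}+C\big)$; multiplying by $\varepsilon^{-4}$ and integrating against $\omega_\varepsilon(x)\,dx$ gives $\frac{1}{4\pi}\int_{\Omega_L}\int_{\Omega_L}\ln\frac{1}{|y-x|}\,\omega_\varepsilon\omega_\varepsilon\le \frac{1}{4\pi}\ln\frac{1}{\varepsilon}+C$. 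Combined with Lemma~\ref{lem4-2}, this establishes \eqref{eq4-5}. I expect this rearrangement to be the main obstacle: the logarithmic singularity forbids any pointwise upper bound on the integrand, so the sharp constant $\tfrac{1}{4\pi}$ can only be captured through the integrated comparison with a centered disk, which in turn requires invoking both the diameter bound and the interior location correctly.

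Finally, for \eqref{eq4-6} I would test the profile relation against $\omega_\varepsilon$. Since $\psi_\varepsilon>0$ on $A$ and $\omega_\varepsilon\ge 0$, we have $\int_{\Omega_L}\omega_\varepsilon\psi_\varepsilon\,dx\ge 0$. Expanding $\psi_\varepsilon=\mathcal{G}\omega_\varepsilon-\eta-\mu_\varepsilon$ and using both $2\mathcal{E}(\omega_\varepsilon)=\int_{\Omega_L}\omega_\varepsilon\mathcal{G}\omega_\varepsilon\,dx-2\int_{\Omega_L}\omega_\varepsilon\eta\,dx$ and $\int_{\Omega_L}\omega_\varepsilon\,dx=1$ gives $\mu_\varepsilon\le 2\mathcal{E}(\omega_\varepsilon)+\int_{\Omega_L}\omega_\varepsilon\eta\,dx\le \frac{1}{2\pi}\ln\frac{1}{\varepsilon}+C$, where the last step uses \eqref{eq4-5} and the boundedness of $\eta$. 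Together with the lower bound of Lemma~\ref{lem4-3}, this proves \eqref{eq4-6}.
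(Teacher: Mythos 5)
Your proposal is correct, and both halves take a genuinely different (and somewhat lighter) route than the paper. For the upper bound on $\mathcal{E}(\omega_\varepsilon)$, the paper invokes the Riesz rearrangement inequality to compare the full double integral $\int\int \ln\frac{1}{|y-x|}\,\omega_\varepsilon\omega_\varepsilon$ with that of the uniform patch $\frac{1}{\varepsilon^2}\mathbf{1}_{B_{\varepsilon/\sqrt{\pi}}(x_\varepsilon)}$, whereas you rearrange the inner integral for each fixed $x$ via the bathtub principle; the two are essentially equivalent here, and your version is more elementary (it only uses that $\ln\frac{1}{|y-x|}$ is radially decreasing about $x$, together with $\|\omega_\varepsilon\|_\infty\le\varepsilon^{-2}$ and unit mass — note the confinement to a compact $K$ away from $\partial\Omega$ is not even needed, since the paper already uses $G(y,x)\le\frac{1}{2\pi}\ln\frac{1}{|y-x|}+C$ on all of $\Omega_L\times\Omega_L$). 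The more substantial divergence is in \eqref{eq4-6}: the paper establishes the two-sided estimate $\int_{\Omega_L}\omega_\varepsilon\psi_\varepsilon\,dx=O(1)$ via a Pohozaev identity on $\partial B_\delta(x_\varepsilon)$, which requires pointwise gradient estimates of $\psi_\varepsilon$ away from the vortex core, and then concludes $2\mathcal{E}(\omega_\varepsilon)=\mu_\varepsilon+O(1)$. You instead observe that the trivial sign condition $\int_{\Omega_L}\omega_\varepsilon\psi_\varepsilon\,dx\ge 0$ (since $\omega_\varepsilon$ is supported on $\{\psi_\varepsilon>0\}$) already yields the one-sided bound $\mu_\varepsilon\le 2\mathcal{E}(\omega_\varepsilon)+O(1)$, which together with the reverse inequality $\mu_\varepsilon\ge 2\mathcal{E}(\omega_\varepsilon)-C$ from Lemma~\ref{lem4-3} closes the argument without any Pohozaev computation. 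Your route is shorter and self-contained at this point; the paper's buys the extra information $\int\omega_\varepsilon\psi_\varepsilon=O(1)$ (a bound on the excess energy inside the core), which is not needed for the statement of the lemma itself.
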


\begin{proof}By the Riesz's rearrangement inequality, we have 
\[
\begin{split}
 \int_{\Omega_L}\int_{\Omega_L}  \ln \frac{1}{|y-x|} \omega_\varepsilon(x)\omega_\varepsilon(y) \; dxdy\leq&  \int_{\Omega_L}\int_{\Omega_L}  \ln \frac{1}{|y-x|} \tilde{\omega}_\varepsilon(x)\tilde{\omega}_\varepsilon(y) \; dxdy
 \\
 \leq &  \frac{1}{4\pi}\ln \frac{1}{\varepsilon}+C,
 \end{split}
\]
where $ \tilde{\omega}_\varepsilon=\frac{1}{\varepsilon^2}\mathbf{1}_{B_{\varepsilon/\sqrt{\pi}}(x_\varepsilon)}$. This shows  
\[
\mathcal{E} (\omega_\varepsilon)\leq \frac{1}{4\pi}\ln \frac{1}{\varepsilon}+C, 
\]
which, together with Lemma~\ref{lem4-2}, gives \eqref{eq4-5}. 

Next we prove \eqref{eq4-6}. 

Let $\tilde{\psi}_\varepsilon\in H_0^1(\Omega)$ be defined by 
\[
\tilde{\psi}_\varepsilon(x)=\int_{\Omega} G_0(y,x) \omega_\varepsilon(y)\;dy.  
\]
Let $\delta>0$ be a small constant.  Since $supp (\Omega_\varepsilon)\subset B_{R\varepsilon}(x_\varepsilon)$, 
using a similar argument as in \cite[Lemma 2.1]{CGPY}, we can prove that for $x\in \Omega\setminus B_\delta (x_\varepsilon)$,  
\[
\frac{\partial \tilde{\psi}_\varepsilon(x)}{\partial x_i}=\frac{\partial G_0(x_\varepsilon,x)}{\partial x_i}+O(\varepsilon),\;\;\; i=1,2. 
\]
Note that 
\[
\psi_\varepsilon(x)=\tilde{\psi}_\varepsilon(x)+\lambda_0 \Big(\int_{\Omega_L} \rho(y)\omega_\varepsilon(y)\;dy\Big)\rho(x)-\eta(x)-\mu_\varepsilon.
\]
So fior $x\in \Omega\setminus B_\delta (x_\varepsilon)$,
\begin{equation}\label{eq4-7}
\frac{\partial \psi_\varepsilon(x)}{\partial x_i}=\frac{\partial G_0(x_\varepsilon,x)}{\partial x_i}+O(1).
\end{equation}
Since $ \psi_\varepsilon$ satisfies the following equation 
\[
-\Delta \psi_\varepsilon=\frac{1}{\varepsilon^2}  \mathbf{1}_{\{
\psi_\varepsilon>0
\}}\;\;\; \text{in}\;\;\Omega, 
\]
we have the following Pohozaev identity 
\[
\begin{split}
\int_{\partial B_\delta(x_\varepsilon)} &\big( (x-x_\varepsilon)\cdot \nabla \psi_\varepsilon \big)\frac{\partial \psi_\varepsilon}{\partial \mathbf{n}}\;dS_x-\frac{1}{2}\int_{\partial B_\delta(x_\varepsilon)} \big((x-x_\varepsilon) \cdot \mathbf{n} \big)|\nabla \psi_\varepsilon|^2\;dS_x
\\
=&2\frac{1}{\varepsilon^2} \int_{\{
\psi_\varepsilon>0
\}} \psi_\varepsilon\;  dx=2\int_{\Omega_L} \omega_\varepsilon \psi_\varepsilon \;dx. 
\end{split}
\]
Then by \eqref{eq4-7}, we have 
\begin{equation}\label{eq4-8}
\int_{\Omega_L} \omega_\varepsilon \psi_\varepsilon \;dx=O(1). 
\end{equation}

So we have 
\[
\begin{split}
2\mathcal{E} (\omega_\varepsilon)=\int_{\Omega_L} \omega_\varepsilon \mathcal{G} \omega_\varepsilon dx-2\int_{\Omega_L} \eta \omega_\varepsilon dx
=\int_{\Omega_L} \omega_\varepsilon \psi_\varepsilon dx-\int_{\Omega_L} \eta \omega_\varepsilon dx+\mu_\varepsilon
=\mu_\varepsilon+O(1).
\end{split}
\]
Thus, \eqref{eq4-6} follows from \eqref{eq4-5}.
\end{proof}

\begin{proof}[{\bf Proof of Theorem~\ref{thm1-4}}] The conclusions follows from Lemma~\ref{lem4-5}, Lemma~\ref{lem4-6}, Corollary~\ref{cor4-7}, Proposition~\ref{pro4-8} and Lemma~\ref{lem4-9}. 
\end{proof}

\begin{proof}[{\bf Proof of Theorem~\ref{thm1-5}}]  We only need to replace $\Omega_L$ by $D_{b,\Gamma}$, in the proof of Theorem~\ref{thm1-4},   to obtain the existence of vortex-patch solutions  of  \eqref{eq1-8}-\eqref{eq1-9} , where  $D_{b,\Gamma}$ is defined in $(ii)$ of Theorem~\ref{th3-1}. 
\end{proof}

\appendix
\section{A harmonic problem}

In this appendix we consider the existence and uniqueness of solution for following harmonic equation with prescribed circulation 
\begin{equation}\label{eqA-1}
\begin{cases}
\Delta u=0\quad \text{in}\; \Omega,\\
u=f+\lambda \;\;  \text{on}\;  \partial O_0,
\;\;\;
u=0 \;\; \text{on}\;\;  \partial S, 
\\
u \to 0\;\; \text{as}\;\; |x|\to \infty, 
\\
\int_{\partial O_0} \frac{\partial u}{\partial \mathbf{n}} dS_x=\Lambda, 
\end{cases}
\end{equation}
where $f$ is a given function,  $\Lambda\in \mathbb{R}$ is a constant.  The pair $(u,\lambda)$ is unknown for function $u$  and flux $\lambda$.

Take a small $0<\delta< \frac{1}{2}\text{dist}(\partial S,\partial O_0)$, where 
$
\text{dist}(\partial S,\partial O_0):=\inf_{x\in \partial S, \; y\in \partial O_0} |x-y|. 
$
Define 
\[
(\partial O_0)_\delta=\{ x\in \Omega|\; \text{dist}(x,\partial O_0)<\delta
\}.
\]
Then we have 

\begin{lemma}\label{lemA-1} Suppose that $f\in C^{2,\alpha}(\overline{(\partial O_0)_\delta})$ for some $\alpha\in (0,1)$. Let $\Lambda\in \mathbb{R}$  be a constant. 
Then \eqref{eqA-1} has a unique solution $(u,\lambda)\in C^{2,\alpha}(\bar{\Omega})\times \mathbb{R}$, satisfying the Schauder estimate
\[
\|u\|_{C^{2,\alpha} (\bar{\Omega})}\leq C (  \|f\|_{C^{2,\alpha}(\overline{(\partial O_0)_\delta})} +|\lambda| ). 
\]
where $C>0$ is a constant, independent of $u,f,\lambda$. 

Moreover, as $|x|\to \infty$, $|\nabla u|\to 0$. 
\end{lemma}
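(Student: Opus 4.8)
The plan is to reduce \eqref{eqA-1} to two standard Dirichlet problems on $\Omega$ and to fix the flux $\lambda$ algebraically from the circulation constraint. Let $\rho$ be the capacity potential solving \eqref{eq2-3}, namely $\Delta\rho=0$ in $\Omega$, $\rho=1$ on $\partial O_0$, $\rho=0$ on $\partial S$ and $\rho\to 0$ at infinity, and let $v$ solve the homogeneous-flux Dirichlet problem
\begin{equation*}
\Delta v=0\ \text{in}\ \Omega,\qquad v=f\ \text{on}\ \partial O_0,\qquad v=0\ \text{on}\ \partial S,\qquad v\to 0\ \text{as}\ |x|\to\infty.
\end{equation*}
Setting $u=v+\lambda\rho$ automatically gives $\Delta u=0$, the boundary values $u=f+\lambda$ on $\partial O_0$ and $u=0$ on $\partial S$, and decay at infinity, so that the only remaining requirement is the circulation identity
\begin{equation*}
\int_{\partial O_0}\frac{\partial u}{\partial\mathbf{n}}\,dS_x=\int_{\partial O_0}\frac{\partial v}{\partial\mathbf{n}}\,dS_x+\lambda\int_{\partial O_0}\frac{\partial\rho}{\partial\mathbf{n}}\,dS_x=\Lambda.
\end{equation*}
Since $0<\rho<1$ in $\Omega$ by the maximum principle, the Hopf lemma yields $\int_{\partial O_0}\partial\rho/\partial\mathbf{n}\,dS_x>0$, so this scalar linear equation determines $\lambda$ uniquely. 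This gives both existence and uniqueness of $(u,\lambda)$ once $v$ and $\rho$ are constructed.

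The core step is solving the Dirichlet problem for $v$ (and, by the identical argument, for $\rho$) on the unbounded strip-with-hole $\Omega$. First I would remove the boundary data by a cutoff: choose $\chi\in C^\infty(\bar\Omega)$ with $\chi\equiv1$ near $\partial O_0$ and $\operatorname{supp}\chi\subset\overline{(\partial O_0)_\delta}$, so that $\chi f$ is well defined (this is exactly where the hypothesis $f\in C^{2,\alpha}(\overline{(\partial O_0)_\delta})$ enters) and vanishes near $\partial S$ because $\delta<\tfrac12\,\text{dist}(\partial S,\partial O_0)$. Then $w:=v-\chi f$ solves $\Delta w=-\Delta(\chi f)=:g$ with $g\in C^{0,\alpha}$ compactly supported in $\overline{(\partial O_0)_\delta}$, together with homogeneous data $w=0$ on $\partial\Omega$ and $w\to0$ at infinity. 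Because $\Omega$ lies in a strip of finite width $\pi$, the Poincar\'e inequality holds on $H_0^1(\Omega)$, the Dirichlet form is coercive, and Lax--Milgram produces a unique weak solution $w\in H_0^1(\Omega)$. Interior and boundary Schauder estimates, using that $\partial O_0$ is $C^3$ and $\partial S$ is flat, then upgrade $w$ to $C^{2,\alpha}(\bar\Omega)$.

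The remaining analytic point, which I expect to be the main obstacle, is the behavior at infinity. Since $g$ is compactly supported, $w$ is harmonic outside a compact set and vanishes on $\partial S=\{x_2=0\}\cup\{x_2=\pi\}$, so I would compare it with the barrier $C\sin(x_2)\,e^{-|x_1|}$ via the maximum principle on the two half-strips to obtain exponential decay of both $w$ and $\nabla w$; the same applies to $\rho$, which gives $|\nabla u|\to0$. This decay also drives uniqueness: if $U=u_1-u_2$ is harmonic with constant value $c$ on $\partial O_0$, zero on $\partial S$, decaying at infinity and zero circulation, then $U-c\rho$ is harmonic with zero data on all of $\partial\Omega$ and decays at infinity, hence vanishes by a Phragm\'en--Lindel\"of/maximum-principle argument on the strip; the zero-circulation condition then forces $c\int_{\partial O_0}\partial\rho/\partial\mathbf{n}\,dS_x=0$, so $c=0$ and $U\equiv0$.

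Finally the Schauder estimate follows from the decomposition $u=v+\lambda\rho$: the fixed function $\rho$ contributes $|\lambda|\,\|\rho\|_{C^{2,\alpha}(\bar\Omega)}$, while the estimates on $w$ and on $\chi f$ give $\|v\|_{C^{2,\alpha}(\bar\Omega)}\le C\|f\|_{C^{2,\alpha}(\overline{(\partial O_0)_\delta})}$, whence
\begin{equation*}
\|u\|_{C^{2,\alpha}(\bar\Omega)}\le C\big(\|f\|_{C^{2,\alpha}(\overline{(\partial O_0)_\delta})}+|\lambda|\big).
\end{equation*}
I would emphasize that all the genuinely nontrivial work is concentrated in the unbounded-domain analysis of the preceding paragraph, where the strip geometry (finite width for Poincar\'e, and exponential decay of harmonic functions vanishing on $\partial S$) replaces the compactness available in bounded-domain potential theory.
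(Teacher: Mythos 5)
Your proof is correct, but it takes a genuinely different route from the paper. The paper proves the lemma variationally: it minimizes $J(u)=\tfrac12\int_\Omega|\nabla u|^2\,dx-\Lambda\big(\tfrac{1}{|\partial O_0|}\int_{\partial O_0}u\,dS_x\big)$ over the affine class $K=\{u\in H^1(\Omega):u=0\text{ on }\partial S,\ u=f+\text{const on }\partial O_0\}$, so that the flux condition $\int_{\partial O_0}\partial u/\partial\mathbf{n}\,dS_x=\Lambda$ appears as the natural boundary condition obtained by varying the free constant; decay at infinity is then extracted by reflecting across the flat boundary $\partial S$ and running a Moser iteration, regularity by a cutoff-plus-Schauder argument near $\partial O_0$ and interior derivative estimates for harmonic functions far away, and uniqueness by a case analysis on $\lambda_1\gtrless\lambda_2$ via the Hopf lemma. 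You instead superpose: $u=v+\lambda\rho$ with $v$ solving the plain Dirichlet problem and $\lambda$ fixed by the scalar equation $\int_{\partial O_0}\partial v/\partial\mathbf{n}+\lambda\int_{\partial O_0}\partial\rho/\partial\mathbf{n}=\Lambda$, which is solvable since the Hopf lemma gives $\int_{\partial O_0}\partial\rho/\partial\mathbf{n}>0$. This is entirely consistent with the rest of the paper (the same decomposition idea underlies Lemma~\ref{lem2-1y}, Theorem~\ref{thm2-2} and the formula \eqref{eq2-24} for $\lambda_{b,\Gamma}$), and it buys you an explicit formula for $\lambda$, a cleaner uniqueness argument (subtract $c\rho$ and apply Phragm\'en--Lindel\"of once, rather than splitting cases), and exponential decay from the barrier $C\sin(x_2)e^{-|x_1|}$, which is stronger than the qualitative decay the paper establishes here and is only recovered later in Lemma~\ref{lem2-5} via the Green representation. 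What the paper's variational route buys in exchange is that existence of the pair $(u,\lambda)$ comes in one shot without first constructing $\rho$, and the method would survive a nonlinear perturbation of the problem. Two small points you should make explicit when writing this up: the uniform $C^{2,\alpha}(\bar\Omega)$ bound on the unbounded domain requires combining boundary Schauder estimates on a fixed compact collar of $\partial O_0$ with interior derivative estimates for the harmonic, decaying tail after reflection across $\partial S$ (as in Step 4 of the paper's proof); and the comparison with the barrier on the half-strips needs $|w(L,\cdot)|\leq C\sin(x_2)$ on the vertical cross-section, which follows from $w$ being $C^1$ up to the flat boundary and vanishing there.
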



\begin{proof}
Define the functional 
\[
J(u)=\frac{1}{2}\int_\Omega |\nabla u|^2 dx-\Lambda\left( \frac{1}{|\partial O_0|}\int_{\partial O_0} u\; dS_x\right). 
\]
We will find a critical point of $J$ in the following admissible set 
\[
K=\{ u\in H^1(\Omega) |\; u=0\;\; \text{on}\;\; \partial S,\quad   u=f+\text{constant} \;\; \text{on}\;\; \partial O_0  \}. 
\]

{\bf Step 1.} There exists a minimizer of $J$ over $K$.

By the Poincar\'e inequality, there exist a constant $C>0$, independent  of $u$, such that 
\begin{equation}\label{eqA-2}
\|u \|_{L^2(\Omega)}\leq C  \|\nabla u\|_{L^2(\Omega)}\quad \forall u \in K.
\end{equation}
Then by the H\"older inequality and the trace theorem, we have  
\begin{equation}\label{eqA-3}
\big| \int_{\partial O_0} u \;dS_x \big|\leq C \|u\|_{L^2(\partial O_0)}\leq C \|u\|_{H^1(\Omega)}\leq C \|\nabla u\|_{L^2(\Omega)}. 
\end{equation}
As a result, 
\begin{equation}\label{eqA-4}
J(u)\geq \frac{1}{2} \|\nabla u\|_{L^2(\Omega)}^2-C \|\nabla u\|_{L^2(\Omega)}, 
\end{equation}
which implies that  $J$ is bonded from below on $K$. Then, we  shall  consider the following minimization problem 
\[
c_K:=\min_{\beta \in K} J(\beta). 
\]

Let $\{u_n\}\subset H^1(\Omega)$  with $u_n=0$ on $\partial S$,   $u_n=f+c_n$ on $\partial O_0$ be a minimization sequence such that 
\[
J(u_n)\to c_K\quad \text{as}\;\;n\to \infty,
\]
where $\{ c_n\} \subset \mathbb{R}$. 
By \eqref{eqA-2} and \eqref{eqA-4}, $\{u_n\}$ is bounded in $H^1(\Omega)$.  Hence, by \eqref{eqA-3} $\{ c_n\}$ is bounded. So there exists $(u,\lambda)\in H^1(\Omega)\times \mathbb{R}$  with $u=0$ on $\partial S$,   $u=f+\lambda$ on $\partial O_0$,  such that as $n\to +\infty$,
\[
c_n\to \lambda ,\quad  u_n \rightharpoonup u  \quad \text{weakly in}\;\; H^1(\Omega).
\]
It follows from 
\[
\int_\Omega |\nabla u|^2 dx\leq \liminf_{n\to+\infty} \int_\Omega |\nabla u_n|^2 dx
\]
that 
\[
J(u)\leq \liminf_{n\to+\infty} J(u_n)=c_K. 
\]

On the other hand, since $u\in K$, we have $J(u)\geq c_K$. So $u$ is a minimizer of $J$ on $K$.

\smallskip 

{\bf Step 2.}   $u$ is a solution of \eqref{eqA-1}. 

For any $\xi\in C_0^\infty (\Omega)$ and $t\in \mathbb{R}$, we have $u+t\xi\in K$. Then 
\[
0=\frac{d}{dt} J(u+t\xi)=\int_\Omega \nabla u\cdot \nabla \xi dx.
\]
Thus $u$ is a weak solution of 
\begin{equation}\label{eqA-5}
\begin{cases}
\Delta u =0\;\;\;\text{in}\;\;\Omega,
\\
u=f+\lambda\;\;\;\text{on}\;\;\partial O_0,
\;\;\;
u=0\;\;\;\text{on}\;\; \partial S.
\end{cases}
\end{equation}

 By reflections on $\partial S$, we can extend $u$ into a harmonic function $\hat{u}$ in $\Omega_\delta=(\mathbb{R}\times (-\delta,\pi+\delta))\setminus O_0$ such that $\hat{u}=u$ in $\Omega$. Since $u \in H^1(\Omega)$, we have $\hat{u} \in H^1(\Omega_\delta)$.  So  $\|\hat{u}\|_{L^2(\Omega_\delta\setminus B_R(0))}\to 0$ as $R\to +\infty$.  By the Moser iteration, for $x\in \Omega$ with large $|x|$,
\[
|u(x)|=|\hat{u}(x)|\leq C \|\hat{u}\|_{L^2(B_\delta(x))}.
\]
Then we have
\begin{equation}\label{eqA-6}
u(x)\to 0\quad \text{as}\;\; |x|\to +\infty.
\end{equation}

Next we study the regularity of $u$.   We take two simply-connected, smooth domains $W_1,W_2\subset S$, such that $O_0\cup (\partial O_0)_{\delta/2} \subset W_1 \subset W_2\subset O_0\cup (\partial O_0)_{\delta} $ and $\text{dist} (\partial W_1,\partial W_2)>0$. Since $\hat{u}$ is harmonic in $\Omega_\delta$, $\hat{u}\in C^\infty(\Omega_\delta)$. As a result, $u\in C^\infty(\overline{S\setminus W_1})$.  Take $\xi\in C_0^\infty(W_2)$ such that  $\xi= 1$ in $W_1$.  Then $v:=\xi (u-f-\lambda)\in H_0^1(W_2\setminus O_0)$ is a solution of 
\[
\Delta v=(u-f-\lambda) \Delta \xi-\xi \Delta f+2\nabla \xi \cdot \nabla (u-f-\lambda)=:g. 
\]
Since $\nabla \xi=0$ and $\Delta \xi =0$ in $W_1$, we have 
\[
\|g\|_{C^\alpha (\overline{W_2\setminus O_0})} \leq C \big(\|u\|_{C^\alpha (\overline{W_2\setminus W_1})}+   \|f\|_{C^{2,\alpha} (\overline{W_2\setminus O_0})}     \big) \leq C  \big(\|u\|_{C^\alpha (\overline{W_2\setminus W_1})}+   \|f\|_{C^{2,\alpha} (\overline{(\partial O_0)_{\delta} })}     \big).
\]
By the Schauder estimate, $v\in C^{2,\alpha}(\overline{W_2\setminus O_0})$ and 
\[
\|v\|_{C^{2,\alpha}(\overline{W_2\setminus O_0}) }\leq C  \big(\|u\|_{C^\alpha (\overline{W_2\setminus W_1})}+   \|f\|_{C^{2,\alpha} (\overline{(\partial O_0)_{\delta} })}     \big).
\]
Since $v=u-f-\lambda$ in $W_1\setminus O_0$, we get 
\begin{equation}\label{eqA-7}
\|u\|_{C^{2,\alpha}(\overline{W_1\setminus O_0}) }\leq C  \big(\|u\|_{C^\alpha (\overline{W_2\setminus W_1})}+   \|f\|_{C^{2,\alpha} (\overline{(\partial O_0)_{\delta} })} +|\lambda|    \big).
\end{equation}
Hence, we finally obtain that $u\in C^{2,\alpha}(\bar{\Omega})$.

Now we take $\xi\in C^2(\bar{\Omega})\cap H^1(\Omega)$ satisfying that $\text{supp}(\xi)$ is bounded,  $\xi=1$ on $\partial O_0$ and $\xi=0$ on $\partial S$. Then $u+t\xi \in K$ for each $t\in \mathbb{R}$. So, 
\[
\begin{split}
0=&\frac{d}{dt} J(u+t\xi)=\int_\Omega \nabla u\cdot \nabla \xi dx-\Lambda \left( \frac{1}{|\partial O_0|}\int_{\partial O_0} \xi\; dS_x\right)
\\
=&-\int_\Omega (\Delta u) \xi\;dx +\int_{\partial O_0}\frac{\partial u}{\partial \mathbf{n}} \xi\; dS_x-\Lambda\left( \frac{1}{|\partial O_0|}\int_{\partial O_0} \xi\; dS_x\right)
\\
=&\int_{\partial O_0}\frac{\partial u}{\partial \mathbf{n}} \; dS_x-\Lambda.
\end{split}
\]
Thus,  we obtain 
\begin{equation}\label{eqA-8}
 \int_{\partial O_0}\frac{\partial u}{\partial \mathbf{n}} \; dS_x=\Lambda. 
\end{equation}

Combining  \eqref{eqA-5}, \eqref{eqA-6}, \eqref{eqA-7},  $u\in C^{2,\alpha}(\bar{\Omega})$ is a classical solution of \eqref{eqA-1}.

\smallskip 

{\bf Step 3.}  The solution $u$ is unique. 

We assume that there exist two solutions $(u_1,\lambda_1)$, $(u_2,\lambda_2)$ of \eqref{eqA-1} in $(C^2(\Omega)\cap C^1(\bar{\Omega}))\times \mathbb{R}$. Let $w=u_1-u_2$. Then 
\[
\begin{cases}
\Delta w=0\quad \text{in}\; \Omega,\\
w=\lambda_1-\lambda_2 \;\;  \text{on}\; \partial O_0,\quad w=0 \;\; \text{on}\;\; \partial S, 
\\
 w\to 0\quad \text{as}\;\; |x|\to +\infty, 
\\
\int_{\partial O_0} \frac{\partial w}{\partial \mathbf{n}} =0.
\end{cases}
\]

If $\lambda_1>\lambda_2$, then by the maximum principle $0<w<\lambda_1-\lambda_2$ in $\Omega$. From the Hopf lemma, $\frac{\partial w}{\partial \mathbf{n}} >0$ on $\partial O_0$. As a result, $\int_{\partial O_0} \frac{\partial w}{\partial \mathbf{n}} dS_x>0$. This is impossible. 

If $\lambda_1<\lambda_2$, we can use a similar argument to deduce that $\int_{\partial O_0} \frac{\partial w}{\partial \mathbf{n}} dS_x<0$, which is also impossible. 

Thus, $\lambda_1=\lambda_2$. Then by the maximum principle we have $w=0$ in $\Omega$.

\smallskip 

{\bf Step 4.}  The global Schauder estimate for $u$. 

We still use the notations $\hat{u}, W_1, W_2, \Omega_\delta$ in Step 2.
By the maximum principle,
\[
\|u\|_{L^\infty(\Omega)}\leq \|f\|_{L^\infty(\partial O_0)}+|\lambda|. 
\]
So we have the following    gradient estimate for harmonic function:  for $x\in \Omega\setminus (\partial O_0)_{\delta/2}$,
\[
|Du(x)|=|D\hat{u}(x)|\leq  \frac{4}{\delta}\max_{y\in \bar{B}_{\delta/2}(x)}|\hat{u}(y)|\leq \frac{4}{\delta} \|\hat{u}\|_{L^\infty(\Omega_\delta)}\leq \frac{4}{\delta} \big(\|f\|_{L^\infty(\partial O_0)}+|\lambda|\big),
\]
where we have used $\|\hat{u}\|_{L^\infty(\Omega_\delta)}=\|u\|_{L^\infty(\Omega)}$.  Similarly, there exists a constant $C>0$, dependent only on $\delta$, such that for $x\in \Omega\setminus (\partial O_0)_{\delta/2}$, 
\[
|D^2u(x)|,\; |D^3u(x)|\leq C\big(\|f\|_{L^\infty(\partial O_0)}+|\lambda|\big). 
\]
So we obtain that 
\begin{equation}\label{eqA-9}
\|u\|_{C^3(\overline{\Omega\setminus (\partial O_0)_{\delta/2}})}\leq C\big(\|f\|_{L^\infty(\partial O_0)}+|\lambda|\big). 
\end{equation}
Then by \eqref{eqA-7}, we get 
\begin{equation}\label{eqA-10}
\begin{split}
\|u\|_{C^{2,\alpha}(\overline{W_1\setminus O_0}) }\leq& C  \big(\|u\|_{C^\alpha (\overline{W_2\setminus W_1})}+   \|f\|_{C^{2,\alpha} (\overline{(\partial O_0)_{\delta} })} +|\lambda|    \big)
\\
\leq&C  \big(\|u\|_{C^\alpha(\overline{\Omega\setminus (\partial O_0)_{\delta/2}})}+   \|f\|_{C^{2,\alpha} (\overline{(\partial O_0)_{\delta} })} +|\lambda|    \big)
\\
\leq&C  \big( \|f\|_{C^{2,\alpha} (\overline{(\partial O_0)_{\delta} })} +|\lambda|    \big).
\end{split}
\end{equation}
Combining \eqref{eqA-9}-\eqref{eqA-10}, we finally obtain that 
\begin{equation}\label{eqA-11}
\|u\|_{C^{2,\alpha}(\bar{\Omega})}\leq C  \big( \|f\|_{C^{2,\alpha} (\overline{(\partial O_0)_{\delta} })} +|\lambda|    \big).
\end{equation}

\smallskip 

{\bf Step 5.}  The asymptotic behavior for $\nabla u$.

Since $\hat{u}$ is a harmonic function in $\Omega_\delta$, we have that for $x\in \Omega$ with large $|x|$,
\[
|\nabla u(x)|=|\nabla \hat{u}(x)|\leq \frac{2}{\delta} \max_{y\in B_\delta(x)\cap \Omega}|\hat{u}(y)|. 
\]
So by \eqref{eqA-6}, $\nabla u \to (0,0)$ as $|x|\to +\infty$.

We complete the proof. 
\end{proof}


\section{The Green representation formulas}

In this appendix we prove the  following Green representation formulas. 

\begin{lemma}\label{lemC-1}Suppose that $f\in C^1(\bar{\Omega})\cap C^2(\Omega)$ is a harmonic function in $\Omega$ with $f=0$ on $\partial S$. Suppose further that $f, |\nabla f|$ are bounded. Then we have the following Green representation formula 
\begin{equation}\label{eqC-1}
f(x)=\int_{\partial O_0}\,G_S(y,x)\frac{\partial f(y)}{\partial \mathbf{n}}-f(y)\frac{\partial G_S(y,x)}{\partial \mathbf{n}}\, dS_y,\quad x\in \Omega, 
\end{equation}
\begin{equation}\label{eqC-2}
f(x)=-\int_{\partial O_0}\,f(y)\frac{\partial G_0(y,x)}{\partial \mathbf{n}}\, dS_y,\quad x\in \Omega, 
\end{equation}
where $\mathbf{n}$ is the unit outward normal of $\partial \Omega$.  


\end{lemma}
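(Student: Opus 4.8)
The plan is to prove both identities by a single mechanism: Green's second identity applied on a truncated, punctured domain, followed by passage to the limit. Fix $x\in\Omega$ and, for large $R$ and small $\epsilon>0$, set $\Omega_R=(S\cap\{|y_1|<R\})\setminus\bar O_0$ and work on $D_{R,\epsilon}=\Omega_R\setminus\bar B_\epsilon(x)$. For \eqref{eqC-1} I take $v(y)=G_S(y,x)$ and for \eqref{eqC-2} I take $v(y)=G_0(y,x)$; in either case $v$ is harmonic in $D_{R,\epsilon}$ (its only pole $y=x$ has been removed) and $f$ is harmonic there by hypothesis, so Green's second identity gives
\begin{equation*}
0=\int_{\partial D_{R,\epsilon}}\Big(f\frac{\partial v}{\partial\mathbf{n}}-v\frac{\partial f}{\partial\mathbf{n}}\Big)\,dS_y,
\end{equation*}
where $\mathbf{n}$ is the outward normal of $D_{R,\epsilon}$. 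The boundary splits into four pieces: $\partial O_0$, the horizontal parts of $\partial S$ with $|y_1|\le R$, the two vertical segments $\{\pm R\}\times(0,\pi)$, and the inner circle $\partial B_\epsilon(x)$.

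Next I would evaluate the three ``good'' pieces. On the horizontal parts of $\partial S$ the integrand vanishes: for \eqref{eqC-1} because $f=0$ and $G_S=0$ there, and for \eqref{eqC-2} because $f=0$ and $G_0=0$ there. On $\partial B_\epsilon(x)$ the standard fundamental-solution computation, using $v(y)=\frac{1}{2\pi}\ln\frac{1}{|y-x|}+O(1)$ near $x$ and the boundedness of $f$ and $\nabla f$, shows that the corresponding integral tends to $f(x)$ as $\epsilon\to0$. On $\partial O_0$ one simply records the integral $\int_{\partial O_0}(f\,\partial_{\mathbf{n}}v-v\,\partial_{\mathbf{n}}f)\,dS_y$; for \eqref{eqC-2} the term $v\,\partial_{\mathbf{n}}f=G_0\,\partial_{\mathbf{n}}f$ drops because $G_0=0$ on $\partial O_0$, leaving exactly $\int_{\partial O_0}f\,\partial_{\mathbf{n}}G_0\,dS_y$. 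Rearranging the identity $0=\int_{\partial O_0}(\cdots)+f(x)$ then yields \eqref{eqC-1} and \eqref{eqC-2} respectively (the overall signs match because on $\partial O_0$ the normal $\mathbf{n}$ of $D_{R,\epsilon}$ coincides with the outward normal of $\partial\Omega$, while on $\partial B_\epsilon$ it points toward $x$).

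The hard part will be the two vertical segments $\{\pm R\}\times(0,\pi)$, where the unbounded geometry of the strip enters. I would show these contributions are $o(1)$ as $R\to\infty$. For \eqref{eqC-1} this is where Lemma~\ref{lem2-4} does the work: by the symmetry $G_S(y,x)=G_S(x,y)$, for fixed $x$ and $|y_1|\to\infty$ one has $G_S(y,x)=O(e^{-|y_1|})$, and a matching bound for $\nabla_y G_S$ follows either by differentiating the explicit formula \eqref{eq1-18} or by interior gradient estimates on balls centered near $\{\pm R\}\times(0,\pi)$ on which $G_S(\cdot,x)$ is small; combined with $\|f\|_\infty+\|\nabla f\|_\infty<\infty$, the integrands are $O(e^{-R})$ and the segments have length $\pi$, so they vanish. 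For \eqref{eqC-2} the same estimate for $G_0$ follows from $0\le G_0(y,x)\le G_S(y,x)$ (maximum principle, since $\Omega\subset S$) together with interior gradient estimates for the harmonic function $G_0(\cdot,x)$. Letting first $\epsilon\to0$ and then $R\to\infty$ completes both proofs.
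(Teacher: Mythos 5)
Your proposal is correct and follows essentially the same route as the paper: truncate the strip at $|y_1|=L$, apply the Green representation on the bounded truncated domain (which you rederive via the punctured domain $D_{R,\epsilon}$, whereas the paper simply cites it), and kill the two vertical boundary contributions using the exponential decay of $G_S(\cdot,x)$, respectively $G_0(\cdot,x)$, together with the boundedness of $f$ and $\nabla f$. The only extra care worth a remark is your gradient bound for $G_0(\cdot,x)$ near the corners of the vertical segments on $\partial S$, which is cleanly justified by odd reflection across $\partial S$ before applying interior estimates.
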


\begin{proof} For $x\in \Omega$, take a large $L>0$ such that $x\in \Omega_L$, where $\Omega_L=\{x\in \Omega|\; -L<x_1<L\}$.  By the Green representation formula in bounded domain, we have 
\begin{equation}\label{eqC-4}
\begin{split}
f(x)=&\int_{\partial \Omega_L}\,G_S(y,x)\frac{\partial f(y)}{\partial \mathbf{n}}-f(y)\frac{\partial G_S(y,x)}{\partial \mathbf{n}}\, dS_y
\\
=&\int_{\partial O_0}\,G_S(y,x)\frac{\partial f(y)}{\partial \mathbf{n}}-f(y)\frac{\partial G_S(y,x)}{\partial \mathbf{n}}\, dS_y
\\
&+\int_{\{y\in \Omega|\; |y_1|=L\}}\,G_S(y,x)\frac{\partial f(y)}{\partial \mathbf{n}}-f(y)\frac{\partial G_S(y,x)}{\partial \mathbf{n}}\, dS_y
\\
\\
=&\int_{\partial O_0}\,G_S(y,x)\frac{\partial f(y)}{\partial \mathbf{n}}-f(y)\frac{\partial G_S(y,x)}{\partial \mathbf{n}}\, dS_y+o_L(1),
\end{split}
\end{equation}
where we have used $G_S(y,x),|\nabla_y G_S(y,x)|\to 0$ as $|y|\to +\infty$.  By letting $L\to +\infty$ in \eqref{eqC-4}, we get \eqref{eqC-1}. 


The formula \eqref{eqC-2} can be obtained by a similar argument as in the proof of \eqref{eqC-1}. 
\end{proof}

{\bf Acknowledgements.} W. Yu was supported by China Postdoctoral Science Foundation (Grant  2023M730333).

\bigskip


\phantom{s}
\thispagestyle{empty}

\end{document}